\providecommand{\U}[1]{\protect\rule{.1in}{.1in}}
\newtheorem{theorem}{Theorem}[section]
\newtheorem{corollary}[theorem]{Corollary}
\newtheorem{definition}[theorem]{Definition}
\newtheorem{lemma}[theorem]{Lemma}
\newtheorem{proposition}[theorem]{Proposition}
\newtheorem{remark}[theorem]{Remark}
\newenvironment{proof}[1][Proof]{\textbf{#1.} }{\hfill\rule{0.5em}{0.5em}}
{\catcode`\@=11\global\let\AddToReset=\@addtoreset
\AddToReset{equation}{section}

\AddToReset{theorem}{section}

\begin{document}

\title{Pointwise estimates and existence of solutions of porous medium and $%
p $-Laplace evolution equations with absorption and measure data}
\author{Marie-Fran\c{c}oise Bidaut-V\'{e}ron\thanks{%
Laboratoire de Math\'{e}matiques et Physique Th\'{e}orique, Facult\'{e} des
Sciences, Universit\'{e} Fran\c{c}ois Rabelais, Tours, France. E-mail:
veronmf@univ-tours.fr} \and Quoc-Hung Nguyen\thanks{%
Laboratoire de Math\'{e}matiques et Physique Th\'{e}orique, Facult\'{e} des
Sciences, Universit\'{e} Fran\c{c}ois Rabelais, Tours, France. E-mail:
Hung.Nguyen-Quoc@lmpt.univ-tours.fr}}
\date{}
\maketitle

\begin{abstract}
Let $\Omega $ be a bounded domain of $\mathbb{R}^{N}(N\geq 2)$. We obtain a
necessary and a sufficient condition, expressed in terms of capacities, for
existence of a solution to the porous medium equation with absorption 
\begin{equation*}
\left\{ 
\begin{array}{l}
{u_{t}}-{\Delta }(|u|^{m-1}u)+|u|^{q-1}u=\mu ~~\text{in }\Omega \times (0,T),
\\ 
{u}=0~~~\text{on }\partial \Omega \times (0,T), \\ 
u(0)=\sigma ,%
\end{array}%
\right.
\end{equation*}%
where $\sigma $ and $\mu $ are bounded Radon measures, $q>\max (m,1)$, $m>%
\frac{N-2}{N}$. We also obtain a sufficient condition for existence of a
solution to the $p$-Laplace evolution equation 
\begin{equation*}
\left\{ 
\begin{array}{l}
{u_{t}}-{\Delta _{p}}u+|u|^{q-1}u=\mu ~~\text{in }\Omega \times (0,T), \\ 
{u}=0~~~\text{on }\partial \Omega \times (0,T), \\ 
u(0)=\sigma .%
\end{array}%
\right.
\end{equation*}%
where $q>p-1$ and $p>2$. 
\end{abstract}

\tableofcontents

\medskip

\noindent{\small \textit{keywords: Sobolev-Besov capacities; Bessel
capacities; Radon measures; renormalized solutions.}\newline
\textit{MSC: 35K92; 35K55; 35K15} }

\section{\protect\small \ Introduction and main results}

{\small Let $\Omega $ be a bounded domain of $\mathbb{R}^{N}$, $N\geq 2$ and 
$T>0,$ and $\Omega _{T}=\Omega \times (0,T)$. In this paper we study the
existence of solutions to the following two types of evolution problems: the
porous medium problem with absorption 
\begin{equation}
\left\{ 
\begin{array}{l}
{u_{t}}-{\Delta }(|u|^{m-1}u)+|u|^{q-1}u=\mu ~~\text{in }\Omega _{T}, \\ 
{u}=0~~~\text{on }\partial \Omega \times (0,T), \\ 
u(0)=\sigma ,%
\end{array}%
\right.  \label{one}
\end{equation}%
where $m>\frac{N-2}{N}$ and $q>\max (1,m),$ and the $p$-Laplace evolution
problem with absorption 
\begin{equation}
\left\{ 
\begin{array}{l}
{u_{t}}-{\Delta _{p}}u+|u|^{q-1}u=\mu ~~\text{in }\Omega _{T}, \\ 
{u}=0~~~\text{on }\partial \Omega \times (0,T), \\ 
u(0)=\sigma ,%
\end{array}%
\right.  \label{two}
\end{equation}%
where $q>p-1>1,$ and $\mu $ and $\sigma $ are bounded Radon measures
respectively on $\Omega _{T}$ and $\Omega $. In the sequel, for any bounded
domain $O$ of $\mathbb{R}^{l}(l\geq 1)$, we denote by $\mathcal{M}_{b}(O)$
the set of bounded Radon measures in $O$, and by $\mathcal{M}_{b}^{+}(O)$
its positive cone. For any $\nu \in \mathcal{M}_{b}(O),$ we denote by $\nu
^{+}$ and $\nu ^{-}$ respectively its positive and negative part. \medskip }

{\small When $m=1,p=2$ and $q>1$ the problem has been studied by Brezis and
Friedman \cite{BrFr} with $\mu =0.$ It is shown that in the subcritical case 
$q<1+2/N$, the problem can be solved for any $\sigma \in \mathcal{M}%
_{b}(\Omega ),$ and it has no solution when $q\geq 1+2/N$ and $\sigma $ is a
Dirac mass. The general case has been solved by Baras and Pierre \cite{BaPi1}
and their results are expressed in terms of capacities. For $s>1,\alpha >0$,
the capacity $\text{Cap}_{\mathbf{G}_{\alpha },s}$ of a Borel set $E\subset 
\mathbb{R}^{N}$, defined by 
\begin{equation*}
\text{Cap}_{\mathbf{G}_{\alpha },s}(E)=\inf \{||g||_{L^{s}(\mathbb{R}%
^{N})}^{s}:g\in L_{+}^{s}(\mathbb{R}^{N}),\mathbf{G}_{\alpha }\ast g\geq 1%
\text{ on }E\},
\end{equation*}%
where $\mathbf{G}_{\alpha }$ is the Bessel kernel of order $\alpha $ and the
capacity $\text{Cap}_{2,1,s}$ of a compact set $K\subset \mathbb{R}^{N+1}$
is defined by 
\begin{equation*}
\text{Cap}_{2,1,s}(K)=\inf \left\{||\varphi ||_{W_{s}^{2,1}(\mathbb{R}%
^{N+1})}^{s}:\varphi \in S(\mathbb{R}^{N+1}),\varphi \geq 1\text{ in a
neighborhood of}~K\right\},
\end{equation*}%
where 
\begin{equation*}
||\varphi ||_{W_{s}^{2,1}(\mathbb{R}^{N+1})}=||\varphi ||_{L^{s}(\mathbb{R}%
^{N+1})}+||\varphi _{t}||_{L^{s}(\mathbb{R}^{N+1})}+||\left\vert \nabla
\varphi \right\vert ||_{L^{s}(\mathbb{R}^{N+1})}+\sum%
\limits_{i,j=1,2,...,N}||\varphi _{x_{i}x_{j}}||_{L^{s}(\mathbb{R}^{N+1})}.
\end{equation*}%
The capacity $\text{Cap}_{2,1,s}$ is extended to Borel sets by the usual
method. Note the relation between the two capacities: 
\begin{equation*}
C^{-1}\text{Cap}_{\mathbf{G}_{2-\frac{2}{s}},s}(E)\leq \text{Cap}%
_{2,1,s}(E\times \{0\})\leq C\text{Cap}_{\mathbf{G}_{2-\frac{2}{s}},s}(E)
\end{equation*}%
for any Borel set $E\subset \mathbb{R}^{N}$, see \cite[Corollary 4.21]{H1}.
In particular, for any $\omega \in \mathcal{M}_{b}(\mathbb{R}^{N})$ and $%
a\in \mathbb{R}$, the measure $\omega \otimes \delta _{\{t=a\}}$ in $\mathbb{%
R}^{N+1}$ is absolutely continuous with respect to the capacity $\text{Cap}%
_{2,1,s}$ ( in $\mathbb{R}^{N+1}$) if and only if $\omega $ is absolutely
continuous with respect to the capacity $\text{Cap}_{\mathbf{G}_{2-\frac{2}{s%
}},s}$ (in $\mathbb{R}^{N}$). \newline
From \cite{BaPi1}, the problem 
\begin{equation*}
\left\{ 
\begin{array}{l}
{u_{t}}-{\Delta }u+|u|^{q-1}u=\mu ~~\text{in }\Omega _{T}, \\ 
{u}=0~~~\text{on }\partial \Omega \times (0,T), \\ 
u(0)=\sigma ,%
\end{array}%
\right.
\end{equation*}%
has a solution if and only if the measures $\mu $ and $\sigma $ are
absolutely continuous with respect to the capacities $\text{Cap}%
_{2,1,q^{\prime }}$ in $\Omega _{T}\text{ and Cap}_{\mathbf{G}_{\frac{2}{q}%
},q^{\prime }}$ in $\Omega $ respectively, where $q^{\prime }=\frac{q}{q-1}$%
.\medskip \medskip }

{\small In Section \ref{PM} we study problem (\ref{one}). \medskip }

{\small For $m>1$, Chasseigne \cite{Ch} has extended the results of \cite%
{BrFr} for $\mu =0$ in the new subcritical range $m<q<m+\frac{2}{N}$. The
supercritical case $q\geq m+\frac{2}{N}$ with $\mu =0$ and $\sigma $ is
positive is studied in \cite{Ch0}. He has essentially proved that if problem %
\eqref{one} has a solution, then $\sigma \otimes \delta _{\{t=0\}}$ is
absolutely continuous with respect to the capacity $\text{Cap}_{2,1,\frac{q}{%
q-m},q^{\prime }}$, defined for anycompact set $K\subset \mathbb{R}^{N+1}$
by 
\begin{equation*}
\text{Cap}_{2,1,\frac{q}{q-m},q^{\prime }}(K)=\inf \left\{||\varphi ||_{W_{%
\frac{q}{q-m},q^{\prime }}^{2,1}(\mathbb{R}^{N+1})}^{\frac{q}{q-m}}:\varphi
\in S(\mathbb{R}^{N}),\varphi \geq 1\text{ in a neighborhood of}~E\right\},
\end{equation*}%
where 
\begin{equation*}
||\varphi ||_{W_{\frac{q}{q-m},q^{\prime }}^{2,1}(\mathbb{R}%
^{N+1})}=||\varphi ||_{L^{\frac{q}{q-m}}(\mathbb{R}^{N+1})}+||\varphi
_{t}||_{L^{q^{\prime }}(\mathbb{R}^{N+1})}+||\left\vert \nabla \varphi
\right\vert ||_{L^{\frac{q}{q-m}}(\mathbb{R}^{N+1})}+\sum%
\limits_{i,j=1,2,...,N}||\varphi _{x_{i}x_{j}}||_{L^{\frac{q}{q-m}}(\mathbb{R%
}^{N+1})}.
\end{equation*}%
\bigskip }

{\small In this Section, we first give \textit{necessary conditions} on the
measures $\mu $ and $\sigma $ for existence, which cover the results
mentioned above. }

\begin{theorem}
{\small \label{NCE}Let $q>\max (1,m)$ and $\mu \in \mathcal{M}_{b}(\Omega
_{T})$ and $\sigma \in \mathcal{M}_{b}(\Omega )$. If problem (\ref{one}) has
a very weak solution then $\mu$ and $\sigma \otimes \delta _{\{t=0\}}$ are
absolutely continuous with respect to the capacity $\text{Cap}_{2,1,\frac{q}{%
q-m},\frac{q}{q-1}}$. }
\end{theorem}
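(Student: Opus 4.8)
The plan is to run the classical test-function (duality) method of Baras--Pierre, adapted to the doubly nonlinear structure of \eqref{one}. Recall that a bounded measure is absolutely continuous with respect to a capacity precisely when it gives zero mass to every Borel set of zero capacity, so by inner/outer regularity it suffices to prove, for $\gamma:=\mu+\sigma\otimes\delta_{\{t=0\}}$ viewed as a bounded measure on $\overline{\Omega}\times[0,T)$, that $|\gamma|(K)=0$ for every compact $K$ with $\text{Cap}_{2,1,\frac{q}{q-m},\frac{q}{q-1}}(K)=0$. First I would record the weak formulation: testing \eqref{one} against an admissible $\varphi\ge 0$ with $\varphi(\cdot,T)=0$ gives
$$\int\varphi\,d\mu+\int_\Omega\varphi(\cdot,0)\,d\sigma=-\int_{\Omega_T}u\,\varphi_t-\int_{\Omega_T}|u|^{m-1}u\,\Delta\varphi+\int_{\Omega_T}|u|^{q-1}u\,\varphi,$$
where, by the very definition of a very weak solution, $|u|^q\in L^1(\Omega_T)$.

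The heart of the matter is a single test-function inequality. Writing $A_\varphi:=\int_{\Omega_T}|u|^q\varphi$, I would estimate the first term by splitting $u=(u\varphi^{1/q})\,\varphi^{-1/q}$ and applying H\"older with exponents $q,q'$, and the second by splitting $|u|^m=(|u|^m\varphi^{m/q})\,\varphi^{-m/q}$ and applying H\"older with the conjugate pair $q/m,\,q/(q-m)$; the absorption term is dominated directly by $A_\varphi$. After Young's inequality this yields
$$\Big|\int\varphi\,d\mu+\int_\Omega\varphi(\cdot,0)\,d\sigma\Big|\le A_\varphi+C\Big(\int_{\Omega_T}|\varphi_t|^{q'}\varphi^{-\frac1{q-1}}+\int_{\Omega_T}|\Delta\varphi|^{\frac{q}{q-m}}\varphi^{-\frac{m}{q-m}}\Big).$$
It is precisely this balancing against $A_\varphi$ that forces the time derivative to be measured in $L^{q'}$ and the spatial second derivatives in $L^{q/(q-m)}$, i.e.\ exactly the two exponents entering $\text{Cap}_{2,1,\frac{q}{q-m},\frac{q}{q-1}}$ (and collapsing to the Baras--Pierre capacity $\text{Cap}_{2,1,q'}$ when $m=1$).

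To turn the singular weights $\varphi^{-1/(q-1)}$ and $\varphi^{-m/(q-m)}$ into genuine capacity norms I would substitute $\varphi=\phi^\lambda$ with $\lambda\ge\max(q',2q/(q-m))$ and $0\le\phi\le1$. Since $\partial_t(\phi^\lambda)=\lambda\phi^{\lambda-1}\phi_t$ and $\Delta(\phi^\lambda)=\lambda\phi^{\lambda-1}\Delta\phi+\lambda(\lambda-1)\phi^{\lambda-2}|\nabla\phi|^2$, the two integrals above are bounded, using $\phi\le1$ to discard the resulting nonnegative powers of $\phi$, by constant multiples of $\int|\phi_t|^{q'}$, $\int|\Delta\phi|^{\frac{q}{q-m}}$ and $\int|\nabla\phi|^{\frac{2q}{q-m}}$. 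The first two are immediately controlled by $\|\phi\|_{W^{2,1}_{\frac{q}{q-m},q'}}$; absorbing the gradient term is the main obstacle, and I expect to clear it by the second-order Gagliardo--Nirenberg interpolation $\|\nabla\phi\|_{L^{2q/(q-m)}}\le C\|\phi\|_{L^{q/(q-m)}}^{1/2}\|\nabla^2\phi\|_{L^{q/(q-m)}}^{1/2}$, which bounds $\int|\nabla\phi|^{2q/(q-m)}$ by a positive power of $\|\phi\|_{W^{2,1}_{\frac{q}{q-m},q'}}$. Thus the whole right-hand side of the inequality (apart from $A_\varphi$) is dominated by a power of the $W^{2,1}_{\frac{q}{q-m},q'}$-norm that vanishes as the norm does.

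Finally I would pass to the limit. Given compact $K$ with zero capacity, choose $\phi_n$ with $\phi_n\ge 1$ near $K$, $0\le\phi_n\le1$ and $\|\phi_n\|_{W^{2,1}_{\frac{q}{q-m},q'}}\to 0$; extracting a subsequence with $\phi_n\to0$ a.e., dominated convergence (with dominant $|u|^q\in L^1(\Omega_T)$) gives $A_{\phi_n^\lambda}\to 0$, while the capacity terms tend to $0$ by the previous step, so $\int\phi_n^\lambda\,d\gamma\to 0$. To recover the full signed conclusion I would localize: to annihilate $\mu^{\pm}(K)$, fix a compact $F\subset K\cap\{t>0\}$ inside the relevant Hahn set and take the $\phi_n$ supported in neighborhoods $U_n\downarrow F$, so that $\phi_n(\cdot,0)\equiv0$ decouples the $\sigma$-contribution while $\mu^{\mp}(U_n)\to0$ by regularity, forcing $\mu^{\pm}(F)\to0$; the symmetric choice of supports near the slice $\{t=0\}$ disposes of $\sigma^{\pm}$. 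This measure-theoretic localization is routine but relies on the finiteness of $\mu$ and $\sigma$, and it completes the argument.
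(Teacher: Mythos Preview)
Your strategy is sound and is the classical Baras--Pierre duality argument, but it is considerably more elaborate than the paper's proof, and one step is misstated.

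The paper exploits directly that a very weak solution has $u\in L^{q}(\Omega_T)$ (since $|u|^{q-1}u\in L^1$). With this global integrability one can apply the \emph{unweighted} H\"older inequality to each term of the weak formulation:
\[
\Big|\!\int u\,\varphi_t\Big|\le \|u\|_{L^q}\|\varphi_t\|_{L^{q'}},\qquad
\Big|\!\int |u|^{m-1}u\,\Delta\varphi\Big|\le \|u\|_{L^q}^{\,m}\|\Delta\varphi\|_{L^{q/(q-m)}},
\]
so no singular weights $\varphi^{-1/(q-1)}$, $\varphi^{-m/(q-m)}$ ever appear, and the substitution $\varphi=\phi^\lambda$ together with the subsequent gradient--interpolation step become unnecessary. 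The paper then simply invokes \cite[Proposition~2.2]{BaPi1} to produce test functions $\varphi_n\in C_c^\infty$, $0\le\varphi_n\le1$, $\varphi_n=1$ on $K$, with $\varphi_n\to0$ in $W^{2,1}_{q/(q-m),\,q'}$ and a.e.; plugging these in and letting $n\to\infty$ finishes the proof in a few lines. Your weighted approach, by contrast, is the route one would take if only local integrability of $|u|^q$ were known; it works here but buys nothing extra.

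One correction: your Gagliardo--Nirenberg inequality $\|\nabla\phi\|_{L^{2q/(q-m)}}\le C\|\phi\|_{L^{q/(q-m)}}^{1/2}\|\nabla^2\phi\|_{L^{q/(q-m)}}^{1/2}$ is false by scaling. The valid interpolation is $\|\nabla\phi\|_{L^{2s}}\le C\|\phi\|_{L^\infty}^{1/2}\|\nabla^2\phi\|_{L^s}^{1/2}$; since you already assume $0\le\phi\le1$, this gives the bound you need, so the gap is easily closed. Your final localization via Hahn sets and time-support is correct, though again the paper handles it more compactly by reducing at the outset to compacts $K$ on which the negative parts of the measures already vanish.
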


\begin{remark}
{\small It is easy to see that the capacity $\text{Cap}_{2,1,\frac{q}{q-m},%
\frac{q}{q-1}}$ is absolutely continuous with respect to the capacity $\text{%
Cap}_{2,1,\frac{q}{q-\max \{m,1\}}}$. Therefore $\mu$ and $\sigma \otimes
\delta _{\{t=0\}}$ are absolutely continuous with respect to the capacities $%
\text{Cap}_{2,1,\frac{q}{q-\max \{m,1\}}}$.In particular $\sigma $ is
absolutely continuous with respect to the capacity $\text{Cap}_{\mathbf{G}_{%
\frac{2\max \{m,1\}}{q}},\frac{q}{q-\max \{m,1\}}}$. }
\end{remark}

{\small The main result of this Section is the following \textit{sufficient
condition} for existence, where we use the notion of $R$-truncated Riesz
parabolic potential $\mathbb{I}_{2}$ on $\mathbb{R}^{N+1}$ of a measure $\mu
\in \mathcal{M}_{b}^{+}(\Omega _{T})$ , defined by 
\begin{equation*}
\mathbb{I}_{2}^{R}[\mu ](x,t)=\int_{0}^{R}\frac{\mu (\tilde{Q}_{\rho }(x,t))%
}{\rho ^{N}}\frac{d\rho }{\rho } ~~\text{ for any }~(x,t)\in \mathbb{R}%
^{N+1},
\end{equation*}%
with $R\in (0,\infty ]$, and $\tilde{Q}_{\rho }(x,t)=B_{\rho }(x)\times
(t-\rho ^{2},t+\rho ^{2})$. }

\begin{theorem}
{\small \label{1106201412}Let $m>\frac{N-2}{N}$, $q>\max (1,m)$, $\mu \in 
\mathcal{M}_{b}(\Omega _{T})$ and $\sigma \in \mathcal{M}_{b}(\Omega )$. }

\begin{description}
\item[i.] {\small If $m>1$ and $\mu$ and $\sigma $ are absolutely continuous
with respect to the capacities $\text{Cap}_{2,1,q^{\prime }}$ in $\Omega
_{T} $ and $\text{Cap}_{\mathbf{G}_{\frac{2}{q}},q^{\prime }}$ in $\Omega ,$
then there exists a very weak solution $u$ of (\ref{one}), satisfying for $%
a.e.(x,t)\in \Omega _{T}$ 
\begin{equation}  \label{1106201430}
|u(x,t)|\leq C\left( \left( \frac{|\sigma |(\Omega )+|\mu |(\Omega _{T})}{%
d^{N}}\right) ^{m_{1}}+|\sigma |(\Omega )+|\mu |(\Omega _{T})+1+\mathbb{I}%
_{2}^{2d}[|\sigma |\otimes \delta _{\{t=0\}}+|\mu |](x,t)\right) ,
\end{equation}%
where $C=C(N,m)>0$ and 
\begin{equation*}
m_{1}=\frac{(N+2)(2mN+1)}{m(mN+2)(1+2N)},\qquad d=\text{diam}(\Omega
)+T^{1/2}.
\end{equation*}
}

\item[ii.] {\small If $\frac{N-2}{N}<m\leq 1,$ and $\mu$ and $\sigma $ are
absolutely continuous with respect to the capacities $\text{Cap}_{2,1,\frac{%
2q}{2(q-1)+N(1-m)}}$ in $\Omega _{T}$ and $\text{Cap}_{\mathbf{G}_{\frac{%
2-N(1-m)}{q}},\frac{2q}{2(q-1)+N(1-m)}}$ in $\Omega ,$ there exists a very
weak solution $u$ $of$ (\ref{one}), such that for $a.e.(x,t)\in \Omega _{T}$ 
\begin{equation}  \label{1106201430b}
|u(x,t)|\leq C\left( \left( \frac{|\sigma |(\Omega )+|\mu |(\Omega _{T})}{%
d^{N}}\right) ^{m_{2}}+1+\left( \mathbb{I}_{2}^{2d}[|\sigma |\otimes \delta
_{\{t=0\}}+|\mu |](x,t)\right) ^{\frac{2}{2-N(1-m)}}\right) ,
\end{equation}%
where $C=C(N,m)>0$ and 
\begin{equation*}
m_{2}=\frac{2N(N+2)(m+1)}{(2+Nm)(2-N(1-m))(2+N(1+m))}.
\end{equation*}%
. }
\end{description}
\end{theorem}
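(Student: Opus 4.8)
The plan is to realize $u$ as the limit of solutions $u_n$ of regularized problems with smooth data, to inherit the pointwise bounds \eqref{1106201430} and \eqref{1106201430b} from the porous medium equation \emph{without} absorption, and to use the capacity hypotheses precisely where they are needed, namely to pass to the limit in the absorption term $|u_n|^{q-1}u_n$. The guiding observation is that for an upper bound the absorption only helps: writing the equation as $u_t-\Delta(|u|^{m-1}u)=\mu-|u|^{q-1}u$ and noting that $-|u|^{q-1}u$ carries the sign opposite to $u$, the solution is dominated in absolute value by the solution $w$ of the pure equation $w_t-\Delta(|w|^{m-1}w)=|\mu|$, $w(0)=|\sigma|$. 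Thus \eqref{1106201430}/\eqref{1106201430b} are nothing but the potential estimates for the pure porous medium equation, which I would invoke from the diffusion analysis developed earlier in the paper; the linear occurrence of $\mathbb{I}_2^{2d}$ for $m>1$ and the exponent $\tfrac{2}{2-N(1-m)}$ for $m\le 1$ reflect the different homogeneity of slow versus fast diffusion.

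For the construction I would first pick smooth approximations $0\le\mu_n^{\pm}\to\mu^{\pm}$ and $\sigma_n^{\pm}\to\sigma^{\pm}$ that respect the relevant capacity, i.e. that converge in the dual Sobolev norm attached to $\text{Cap}_{2,1,q'}$ in case \textbf{i} and to $\text{Cap}_{2,1,\frac{2q}{2(q-1)+N(1-m)}}$ in case \textbf{ii}; this is the parabolic form of the Baras--Pierre decomposition of a measure that is absolutely continuous with respect to a capacity. For each $n$ the data are bounded and smooth, so the regularized problem admits a bounded solution $u_n$ by the standard monotonicity and compactness theory for the operator $v\mapsto-\Delta(|v|^{m-1}v)+|v|^{q-1}v$. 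Applying the comparison above at this regularized level gives, uniformly in $n$, the estimate \eqref{1106201430} for $u_n$ when $m>1$ and \eqref{1106201430b} when $m\le 1$, with $\mu_n,\sigma_n$ in place of $\mu,\sigma$; I abbreviate the potential exponent as $\kappa=1$ for $m>1$ and $\kappa=\tfrac{2}{2-N(1-m)}$ for $m\le 1$.

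The limit $n\to\infty$ needs three ingredients. Strong compactness of $\{u_n\}$ and weak stability of the diffusion flux $-\Delta(|u_n|^{m-1}u_n)$ follow from the uniform energy estimates together with an Aubin--Lions/monotonicity argument, yielding $u_n\to u$ a.e. and convergence of the diffusion term in the sense of distributions. The delicate ingredient is the $L^1$-convergence of the absorption term: I would show that the capacity hypotheses force $\{|u_n|^{q}\}$ to be equi-integrable in $L^1(\Omega_T)$. By the uniform pointwise bound this reduces to proving that $\big(\mathbb{I}_2^{2d}[\rho_n]\big)^{q\kappa}$ is uniformly integrable whenever the $\rho_n$ approximate a measure that does not charge sets of null capacity; this is a duality estimate pairing the truncated parabolic Riesz potential against the capacity, in the spirit of the equivalence between absolute continuity with respect to a capacity and uniform integrability of the associated nonlinear potential. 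Equi-integrability together with $u_n\to u$ a.e. gives $|u_n|^{q-1}u_n\to|u|^{q-1}u$ in $L^1(\Omega_T)$ by Vitali's theorem, and one concludes that $u$ is a very weak solution of \eqref{one} obeying \eqref{1106201430}/\eqref{1106201430b}.

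The principal obstacle is exactly this equi-integrability, that is, excluding concentration of the absorption mass $|u_n|^{q-1}u_n$ on sets of small capacity in the limit. Everything rests on matching the nonlinear power $\kappa$ carried by the potential with the \emph{correct} capacity: in case \textbf{i} the linearly-occurring potential must pair with the heat capacity $\text{Cap}_{2,1,q'}$ (equivalently $\text{Cap}_{\mathbf{G}_{2/q},q'}$ on the slices), while in case \textbf{ii} the fast-diffusion exponent $\kappa=\tfrac{2}{2-N(1-m)}$ must pair with $\text{Cap}_{2,1,\frac{2q}{2(q-1)+N(1-m)}}$. Verifying that these pairings indeed deliver uniform integrability of $\big(\mathbb{I}_2^{2d}[\rho_n]\big)^{q\kappa}$, with the singular initial datum $\sigma\otimes\delta_{\{t=0\}}$ and the lateral boundary handled correctly, is the technical heart of the argument; the pure-diffusion potential estimate underlying \eqref{1106201430}/\eqref{1106201430b} is the other essential input, and if it were not already available its barrier-and-scaling proof would itself be the dominant difficulty.
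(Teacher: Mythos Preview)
Your overall architecture (approximate, inherit the pure-diffusion potential bound by comparison, pass to the limit) matches the paper's, but the step you flag as ``the technical heart'' actually fails as stated. Absolute continuity of $\rho$ with respect to $\text{Cap}_{2,1,s'}$ does \emph{not} imply $\big(\mathbb{I}_2^{2d}[\rho]\big)^{s}\in L^1$; it is strictly weaker. Hence for a generic smooth approximation $\rho_n\to\rho$ the family $\big(\mathbb{I}_2^{2d}[\rho_n]\big)^{q\kappa}$ need not be uniformly integrable: if it were, Fatou's lemma would force $\big(\mathbb{I}_2^{2d}[\rho]\big)^{q\kappa}\in L^1$, which is the stronger property you have not assumed. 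So your Vitali argument for $|u_n|^{q-1}u_n\to|u|^{q-1}u$ in $L^1$ has a genuine gap.

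The paper circumvents this by a monotonicity scheme rather than equi-integrability. From the capacity hypothesis it extracts (Proposition~\ref{1006201410}) \emph{nondecreasing} sequences $\upsilon_{1,n}\uparrow \sigma^+\!\otimes\!\delta_{\{t=0\}}+\mu^+$ and $\upsilon_{2,n}\uparrow \sigma^-\!\otimes\!\delta_{\{t=0\}}+\mu^-$ with $\mathbb{I}_2^{2d}[\upsilon_{i,n}]\in L^{q\kappa}$ for each fixed $n$ (no uniformity in $n$ is claimed or needed). It then works on $\Omega\times(-T,T)$ with zero initial data, absorbing $\sigma$ into the source as $\sigma\otimes\delta_{\{t=0\}}$, and solves (Lemma~\ref{1106201421}) the problem with \emph{truncated} absorption $T_{k_1}((u^+)^q)-T_{k_2}((u^-)^q)$ and data $\upsilon_{1,n_1}-\upsilon_{2,n_2}$. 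The resulting family $u_{n_1,n_2,k_1,k_2}$ is monotone in each of the four parameters by comparison. The truncations $k_1,k_2\to\infty$ are removed at \emph{fixed} $n_1,n_2$ by dominated convergence, and this is precisely where $\mathbb{I}_2^{2d}[\upsilon_{i,n}]\in L^{q\kappa}$ is spent. The final passage $n_1,n_2\to\infty$ uses only the uniform absorption estimate $\int_{\Omega_T}|u_{n_1,n_2}|^q\le |\mu|(\Omega_T)+|\sigma|(\Omega)$ (obtained from the equation itself, not from the potential bound) together with monotone convergence. If you want to salvage your route, you must either replace the Vitali step by this monotone scheme, or prove a genuine equi-integrability statement---and the latter would amount to assuming more on $\mu,\sigma$ than the theorem does.
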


\begin{remark}
{\small These estimates are not homogeneous in $u.$ In particular if $\mu
\equiv 0,$ $u$ satisfies the decay estimates, for $a.e.$ $(x,t)\in \Omega
_{T},$ }

\begin{description}
\item[i.] {\small if $m>1,$ 
\begin{equation*}
|u(x,t)|\leq C\left( \left(\frac{|\sigma|(\Omega )}{d^{N}}%
\right)^{m_1}+|\sigma|(\Omega )+1+\frac{|\sigma|(\Omega )}{Nt^{N/2}}\right) ,
\end{equation*}
}

\item[ii.] {\small if $m<1,$ 
\begin{equation*}
| u(x,t)|\leq C\left( \left( \frac{|\sigma|(\Omega )}{ d^{N}}\right)
^{m_{2}}+1+\left( \frac{|\sigma|(\Omega )}{Nt^{N/2}} \right) ^{\frac{2}{%
2-N(m-1)}}\right) .
\end{equation*}
}
\end{description}
\end{remark}

{\small We also give other types of \textit{sufficien}t conditions for 
\textit{measures which are good in time}, that means such that 
\begin{equation}
\sigma \in L^{1}(\Omega )\text{ \quad and }|\mu |\leq f+\omega \otimes F,%
\text{ \quad where }f\in L_{+}^{1}(\Omega _{T}),F\in L^{1}_+((0,T)),
\label{gm}
\end{equation}%
see Theorem \ref{1106201432}. The proof is based on estimates for the
stationary problem in terms of elliptic Riesz potential.\bigskip }

{\small In Section \ref{PL}, we consider problem (\ref{two}). Let us recall
some former results about it\medskip . }

{\small For $q>p-1>0,$ Pettitta, Ponce and Porretta \cite{PePoPor} have
proved that it admits a (unique renormalized) solution provided $\sigma \in
L^{1}(\Omega )$ and $\mu \in \mathcal{M}_{b}(\Omega _{T})$ is a \textit{%
diffuse} measure, i.e. absolutely continuous with respect to $C_{p}$%
-capacity in $\Omega _{T}$, defined on a compact set $K\subset \Omega _{T}$
by 
\begin{equation}
C_{p}(K,\Omega _{T})=\inf \left\{ ||\varphi ||_{W}:\varphi \in C_{c}^{\infty
}(\Omega _{T})\varphi \geq 1\text{ on }K\right\} ,  \label{aaa}
\end{equation}%
where 
\begin{equation*}
W=\{z:z\in L^{p}(0,T,W_{0}^{1,p}(\Omega )\cap L^{2}(\Omega )),z_{t}\in
L^{p^{\prime }}(0,T,W^{-1,p^{\prime }}(\Omega )+L^{2}(\Omega ))\}.
\end{equation*}
}

{\small In the recent work \cite{BiH}, we have proved a stability result for
the $p$-Laplace parabolic equation, see Theorem \ref{100620145}, for $p>%
\frac{2N+1}{N+1}$. As a first consequence, in the new subcritical range 
\begin{equation*}
q<p-1+\frac{p}{N},
\end{equation*}%
problem (\ref{two}) admits a renormalized solution for any measures $\mu \in 
\mathcal{M}_{b}(\Omega _{T})$ and $\sigma \in L^1(\Omega ).$ Moreover, we
have obtained sufficient conditions for existence, for measures that have a 
\textit{good behavior in time}, of the form (\ref{gm}). It is shown that (%
\ref{two}) has a renormalized solution if $\omega \in \mathcal{M}%
_{b}^{+}(\Omega )$ is absolutely continuous with respect to $\text{Cap}_{%
\mathbf{G}_{p},\frac{q}{q-p+1}}$. The proof is based on estimates of \cite%
{BiHV} for the stationary problem which involve Wolff potentials. \medskip
\bigskip }

{\small Here we give \textit{new sufficient conditions when }$p>2.$ The next
Theorem is our second main result: }

\begin{theorem}
{\small \label{100620149}Let $q>p-1>1$ and $\mu \in \mathcal{M}_{b}(\Omega
_{T})$ and $\sigma \in \mathcal{M}_{b}(\Omega )$. If $\mu$ and $\sigma $ are
absolutely continuous with respect to the capacities $\text{Cap}%
_{2,1,q^{\prime }}$ in $\Omega _{T}$ and $\text{Cap}_{\mathbf{G}_{\frac{2}{q}%
},q^{\prime }}$ in $\Omega $, then there exists a distribution solution of
problem (\ref{two}) which satisfies the pointwise estimate 
\begin{equation}  \label{080720141}
|u(x,t)|\leq C\left( 1+D+\left( \frac{|\sigma |(\Omega )+|\mu |(\Omega _{T})%
}{D^{N}}\right) ^{m_3}+\mathbb{I}_{2}^{2D}\left[ |\sigma |\otimes \delta
_{\{t=0\}}+|\mu |\right](x,t) \right)
\end{equation}
for a.e $(x,t)\in \Omega_T$ with $C=C(N,p)$ and 
\begin{equation}
m_3=\frac{(N+p)(\lambda +1)(p-1)}{((p-1)N+p)(1+\lambda (p-1))},\qquad
\lambda =\min \{1/(p-1),1/N\},\qquad D=\text{diam}(\Omega )+T^{1/p}.
\label{lim}
\end{equation}%
Moreover, if $\sigma \in L^{1}(\Omega )$, $u$ is a renormalized solution. }
\end{theorem}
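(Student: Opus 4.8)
The plan is to prove Theorem \ref{100620149} by an approximation scheme that mirrors the structure used for the porous medium case (Theorem \ref{1106201412}), transferring the pointwise control from suitable regularized problems to the limit via the stability result of \cite{BiH} (Theorem \ref{100620145}). First I would regularize the data: approximate $|\mu|$ and $|\sigma|$ by sequences of smooth nonnegative functions $\mu_n$, $\sigma_n$ that converge weakly-$*$ to the respective measures while keeping $\|\mu_n\|_{L^1(\Omega_T)}\le |\mu|(\Omega_T)$ and $\|\sigma_n\|_{L^1(\Omega)}\le|\sigma|(\Omega)$, and whose truncated Riesz potentials are controlled by $\mathbb{I}_2^{2D}[|\sigma|\otimes\delta_{\{t=0\}}+|\mu|]$. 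Because $\mu$ and $\sigma$ are absolutely continuous with respect to the capacities $\text{Cap}_{2,1,q'}$ and $\text{Cap}_{\mathbf{G}_{2/q},q'}$, one decomposes the data so that the absorption term $|u|^{q-1}u$ can absorb the singular part; this absolute continuity is exactly what guarantees the approximation does not lose mass in the capacitary sense and that the limiting measure remains admissible as a datum for a renormalized/distributional solution.

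The core of the argument is a pointwise a priori bound for the solutions $u_n$ of the regularized $p$-Laplace problems with smooth data. Here I would invoke a comparison between $u_n$ and the solution of the pure $p$-Laplace evolution equation (without absorption) driven by the same positive data, together with the parabolic potential estimates for the $p$-Laplacian: the decisive ingredient is a Riesz-potential pointwise estimate of the form
\begin{equation*}
|u_n(x,t)|\le C\Bigl(1+D+\bigl(D^{-N}(\|\sigma_n\|_{L^1}+\|\mu_n\|_{L^1})\bigr)^{m_3}+\mathbb{I}_2^{2D}[\sigma_n\otimes\delta_{\{t=0\}}+\mu_n](x,t)\Bigr).
\end{equation*}
The exponent $m_3$ and the scale $D=\text{diam}(\Omega)+T^{1/p}$ in \eqref{lim} arise from a scaling/Barenblatt-type analysis of the homogeneous $p$-Laplace flow: the parameter $\lambda=\min\{1/(p-1),1/N\}$ encodes the interpolation between the intrinsic diffusion scaling of the $p$-Laplacian and the $L^1\to L^\infty$ smoothing rate, so the correct power of the total mass is read off from the self-similar decay of the fundamental solution. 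I would derive this bound first for the whole-space or large-ball problem and then localize to $\Omega_T$ using the $R$-truncation at $R=2D$.

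With the uniform pointwise bound in hand, passing to the limit is the final step. Since the right-hand side of the estimate is finite for a.e.\ $(x,t)$ (the truncated potential $\mathbb{I}_2^{2D}$ is locally integrable and finite a.e.\ because $|\sigma|\otimes\delta_{\{t=0\}}+|\mu|$ is a bounded measure), the sequence $u_n$ is bounded in the relevant energy and Marcinkiewicz spaces; standard monotonicity/compactness for the $p$-Laplacian yields a limit $u$, and the stability theorem \cite{BiH} ensures that $u$ is a distribution solution of \eqref{two}, with the absorption term converging thanks to the equi-integrability forced by the absolute continuity hypothesis. The pointwise estimate \eqref{080720141} then survives in the limit by Fatou's lemma applied to $|u_n|$ against the (weakly converging) potentials. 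The upgrade to a renormalized solution under the extra hypothesis $\sigma\in L^1(\Omega)$ follows because, in that case, the initial datum is no longer singular and the stability result applies directly in the renormalized framework.

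I expect the main obstacle to be establishing the pointwise potential estimate with the precise exponent $m_3$: controlling the solution of the regularized problem by the truncated parabolic Riesz potential $\mathbb{I}_2^{2D}$ requires intrinsic-geometry De Giorgi--Nash--Moser estimates adapted to the $p$-Laplacian (where the parabolic cylinders must be rescaled according to the local size of the solution), and matching the scaling to produce exactly $m_3=\frac{(N+p)(\lambda+1)(p-1)}{((p-1)N+p)(1+\lambda(p-1))}$ is delicate. The secondary difficulty is ensuring that the absorption term $|u_n|^{q-1}u_n$ remains equi-integrable so that it does not concentrate on a capacity-null set in the limit; this is where the hypothesis $q>p-1$ combined with the capacitary absolute continuity must be used quantitatively.
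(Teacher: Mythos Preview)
Your overall plan---regularize, establish a pointwise potential bound for the approximations by comparison with the absorption-free problem, then pass to the limit via the stability theorem of \cite{BiH}---matches the paper's approach. But several of the mechanisms you leave vague are exactly where the work lies, and one of your stated obstacles is not actually an obstacle.

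First, the pointwise estimate you flag as ``the main obstacle'' is not something the paper derives from scratch. It is Proposition~\ref{100620147}, obtained by quoting the Liskevich--Skrypnik--Sobol bound (Theorem~\ref{1106201414}) and combining it with the Marcinkiewicz estimate $|\{|U|>\ell\}|\le c\,(|\mu|(\Omega_T))^{(p+N)/N}\ell^{-(p-1+p/N)}$; the exponent $m_3$ drops out of an elementary optimization over a cut-off level $\ell_0$, not from a Barenblatt analysis. So you should invoke \cite{LSS} as a black box rather than plan to redo intrinsic-geometry De Giorgi iterations.

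Second, your description of how the capacity hypothesis is used (``absolute continuity guarantees the approximation does not lose mass in the capacitary sense'') misses the actual mechanism. The paper applies Proposition~\ref{1006201410} to $\mu^+$ and $\mu^-$ to produce \emph{nondecreasing} sequences $\{\mu_{1,n}\},\{\mu_{2,n}\}$ with $\mathbb{I}_2^{2D}[\mu_{i,n}]\in L^q(\Omega_T)$. This $L^q$ integrability, together with the pointwise bound, is what lets one remove the truncation $T_k$ from the absorption term by dominated convergence; the uniform $L^1$ control $\int v_n^q\le |\mu|(\Omega_T)+|\sigma|(\Omega)$ and \emph{monotonicity} of $\{v_n\}$ then handle the final $n\to\infty$ limit. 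There is no equi-integrability argument. Concretely, the paper runs a triple limit: mollification index $m\to\infty$, then truncation level $k\to\infty$, then capacitary index $n\to\infty$, each step justified by Theorem~\ref{100620145}; along the way it carries an auxiliary nonnegative majorant $v_{n,k,m}$ solving the problem with data $\mu_{1,n}+\mu_{2,n}$ and $T_n(|\sigma|)$, so that $|u_{n,k,m}|\le v_{n,k,m}$ by comparison.

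Finally, for general $\sigma\in\mathcal{M}_b(\Omega)$ (not $L^1$), the paper does not approximate $\sigma$ directly. Instead it extends the time interval to $(-T,T)$, treats $\sigma\otimes\delta_{\{t=0\}}$ as part of the right-hand side measure (which is absolutely continuous with respect to $\text{Cap}_{2,1,q'}$ by the hypothesis on $\sigma$), solves with zero initial data at $t=-T$, and then restricts to $\Omega_T$ using Remark~\ref{110620143}. Your proposal does not account for this step, and without it the renormalized framework does not directly accommodate measure initial data.
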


\section{{\protect\small Porous medium equation\label{PM}}}

{\small For $k>0$ and $s\in \mathbb{R}$ we set $T_{k}(s)=\max \{\min
\{s,k\},-k\}$. The solutions of (\ref{one}) are considered in a weak sense: }

\begin{definition}
{\small Let $\mu \in \mathcal{M}_{b}(\Omega _{T})$ and $\sigma \in \mathcal{M%
}_{b}(\Omega )$ and $g\in C(\mathbb{R})$. }

{\small \textbf{i.} A function $u$ is a weak solution of problem 
\begin{equation}
\left\{ 
\begin{array}{l}
{u_{t}}-{\Delta }(|u|^{m-1}u)+g(u)=\mu ~~\text{in }\Omega _{T}, \\ 
{u}=0~~~\text{on }\partial \Omega \times (0,T), \\ 
u(0)=\sigma ~~~\text{ in }~\Omega .%
\end{array}%
\right.  \label{G}
\end{equation}%
if $u\in C(\left[ 0,T\right] ;L^{2}(\Omega )),$ $|u|^{m}\in
L^{2}((0,T);H_{0}^{1}(\Omega ))$ and $g(u)\in L^{1}(\Omega _{T}),$ and for
any $\varphi \in C_{c}^{2,1}(\Omega \times \lbrack 0,T)),$ 
\begin{equation*}
-\int_{\Omega _{T}}u\varphi _{t}dxdt+\int_{\Omega _{T}}\nabla (\left\vert
u\right\vert ^{m-1}u).\nabla \varphi dxdt+\int_{\Omega _{T}}g(u)\varphi
dxdt=\int_{\Omega _{T}}\varphi d\mu +\int_{\Omega }\varphi (0)d\sigma .
\end{equation*}
}

{\small \textbf{ii.} A function $u$ is a very weak solution of (\ref{G}) if $%
u\in L^{\max\{m,1\}}(\Omega _{T})$ and $g(u)\in L^{1}(\Omega _{T}),$ and for
any $\varphi \in C_{c}^{2,1}(\Omega \times \lbrack 0,T)),$ 
\begin{equation*}
-\int_{\Omega _{T}}u\varphi _{t}dxdt-\int_{\Omega _{T}}|u|^{m-1}u\Delta
\varphi dxdt+\int_{\Omega _{T}}g(u)\varphi dxdt=\int_{\Omega _{T}}\varphi
d\mu +\int_{\Omega }\varphi (0)d\sigma .
\end{equation*}
}
\end{definition}

{\small First we give a priori estimates for the problem without
perturbation term: }

\begin{proposition}
{\small \label{110620146}Let $u\in L^{\infty }(\Omega _{T})$ with $%
|u|^{m}\in L^{2}((0,T);H_{0}^{1}(\Omega ))$ be a weak solution to problem 
\begin{equation}
\left\{ 
\begin{array}{l}
{u_{t}}-{\Delta }(|u|^{m-1}u)=\mu ~~\text{in }\Omega _{T}, \\ 
{u}=0~~~\text{on }\partial \Omega \times (0,T), \\ 
u(0)=\sigma ~~\text{ in }~\Omega ,%
\end{array}%
\right.  \label{PME}
\end{equation}%
with $\sigma \in C_{b}(\Omega )$ and $\mu \in C_{b}(\Omega _{T})$. Then, 
\begin{align}
& ||u||_{L^{\infty }((0,T);L^{1}(\Omega ))}\leq |\sigma |(\Omega )+|\mu
|(\Omega _{T}),  \label{110620148} \\
& ||u||_{L^{m+2/N,\infty }(\Omega _{T})}\leq C_{1}(|\sigma |(\Omega )+|\mu
|(\Omega _{T}))^{\frac{N+2}{mN+2}},  \label{110620149} \\
& |||\nabla (|u|^{m-1}u)|||_{L^{\frac{mN+2}{mN+1},\infty }(\Omega _{T})}\leq
C_{2}(|\sigma |(\Omega )+|\mu |(\Omega _{T}))^{\frac{m(N+1)+1}{mN+2}},
\label{1106201410}
\end{align}%
where $C_{1}=C_{1}(N,m),C_{2}=C_{2}(N,m)$. }
\end{proposition}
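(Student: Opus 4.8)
The plan is to establish the three a priori bounds for the unperturbed porous medium equation \eqref{PME} under the regularizing assumption $\sigma \in C_b(\Omega)$, $\mu \in C_b(\Omega_T)$, where the solution $u$ is bounded and smooth enough to justify the computations. The first estimate \eqref{110620148} is a standard $L^1$ contraction/mass bound: I would test the equation against $\operatorname{sign}(u)$ (more precisely against a smooth approximation $T_k(u)/k$ or $\operatorname{sign}_\varepsilon(u)$ and pass to the limit), using that $\int_\Omega \nabla(|u|^{m-1}u)\cdot\nabla(\operatorname{sign}_\varepsilon(u))\,dx \geq 0$ because $s\mapsto |s|^{m-1}s$ is nondecreasing. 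This yields $\frac{d}{dt}\|u(t)\|_{L^1(\Omega)} \leq \int_\Omega d|\mu|$, and integrating in time together with the initial condition $u(0)=\sigma$ gives $\|u\|_{L^\infty((0,T);L^1(\Omega))} \leq |\sigma|(\Omega)+|\mu|(\Omega_T)$.

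Next, \eqref{110620149} and \eqref{1106201410} are the Marcinkiewicz (weak-$L^p$) bounds that quantify the smoothing of the porous medium operator starting from $L^1$ data. The standard route is to test against truncations $T_k(|u|^{m-1}u)$ (or against $T_k(u)$), extracting an energy estimate of the form $\int_{\Omega_T} |\nabla T_k(|u|^{m-1}u)|^2\,dx\,dt \leq Ck(|\sigma|(\Omega)+|\mu|(\Omega_T))$ from the $L^1$ control just obtained, where the factor $k$ comes from $\|T_k(\cdot)\|_\infty \leq k$ paired with the mass bound. From this level-set energy estimate I would derive the weak-type bounds by the classical interpolation argument of Boccardo–Gallouët type: combine the gradient control on truncates with the Sobolev/Gagliardo–Nirenberg embedding in the parabolic scaling to estimate the measure of the super-level sets $\{|u|>\lambda\}$, and optimize in $k$ (balancing $k$ against $\lambda$) to read off the exponents $m+2/N$ for $u$ and $\frac{mN+2}{mN+1}$ for $\nabla(|u|^{m-1}u)$. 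The precise exponents $\frac{N+2}{mN+2}$ and $\frac{m(N+1)+1}{mN+2}$ on the data should then fall out of the homogeneity of the scaling.

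The main obstacle I anticipate is the careful bookkeeping in the level-set/interpolation argument: the porous medium nonlinearity means the natural quantity controlled in $H^1$ is $|u|^{m-1}u$ rather than $u$ itself, so passing between level sets of $u$ and of $|u|^{m-1}u$ (which are related by $\{|u|>\lambda\}=\{\,||u|^{m-1}u|>\lambda^m\}$) must be tracked consistently through the exponents, and the parabolic Gagliardo–Nirenberg interpolation (distributing integrability between $L^\infty_t L^1_x$ and $L^2_t H^1_x$) must be set up with the correct spatial-temporal weights. Getting the sharp constants and the exact power of $(|\sigma|(\Omega)+|\mu|(\Omega_T))$ requires this chain to be executed without slack. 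A secondary technical point is justifying the test-function computations rigorously: since $\mu,\sigma$ are only assumed continuous and bounded, I would regularize, obtain the estimates for smooth solutions with the constants depending only on $N,m$, and note that these bounds are stable under the approximation. Finally, since only $N$ and $m$ may enter the constants, I would keep $T$ and $|\Omega|$ out of the estimates by relying solely on the dimensionless parabolic scaling of the homogeneous equation.
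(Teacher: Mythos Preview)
Your proposal is correct and follows essentially the same route as the paper's proof: test the equation with $T_k(|u|^{m-1}u)$ to obtain simultaneously the energy bound $\int_{\Omega_T}|\nabla T_k(|u|^{m-1}u)|^2\,dx\,dt\le k(|\sigma|(\Omega)+|\mu|(\Omega_T))$ and (via the primitive $H_k(a)=\int_0^a T_k(|y|^{m-1}y)\,dy$ and $k\to 0$) the $L^\infty_tL^1_x$ bound, then apply the parabolic Gagliardo--Nirenberg inequality to $T_k(|u|^{m-1}u)$ to get the weak-$L^{m+2/N}$ estimate on $u$, and finally split the level sets of the gradient and optimize the truncation level to obtain \eqref{1106201410}. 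The only minor remark is that the paper extracts both \eqref{110620148} and the truncated energy estimate from the single test function $T_k(|u|^{m-1}u)$ rather than first testing with $\operatorname{sign}_\varepsilon(u)$, but this is a cosmetic difference.
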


\begin{proof}[Proof of Proposition \protect\ref{110620146}]
{\small For any $\tau \in (0,T),$ and $k>0$ we have 
\begin{equation*}
\int_{\Omega _{\tau }}(H_{k}(u))_{t}dxdt+\int_{\Omega _{\tau }}|\nabla
T_{k}(|u|^{m-1}u)|^{2}dxdt=\int_{\Omega _{\tau }}T_{k}(|u|^{m-1}u)d\mu (x,t),
\end{equation*}%
where $H(a)=\int_{0}^{a}T_{k}(|y|^{m-1}y)dy$. This leads to 
\begin{align}
& \int_{\Omega _{T}}|\nabla T_{k}(|u|^{m-1}u)|^{2}dxdt\leq k(|\sigma
|(\Omega )+|\mu |(\Omega _{T}))~~\text{ and }  \label{110620147} \\
& \int_{\Omega }(H_{k}(u))(\tau )dx\leq k(|\sigma |(\Omega )+|\mu |(\Omega
_{T})),~\forall \tau \in (0,T).  \notag
\end{align}%
Since $H_{k}(a)\geq k(|a|-k)$ for any $a$ and $k>0$, we find 
\begin{equation*}
\int_{\Omega }(|u|(\tau )-k)dx\leq |\sigma |(\Omega )+|\mu |(\Omega
_{T}),~\forall \tau \in (0,T).
\end{equation*}%
Letting $k\rightarrow 0$, we get \eqref{110620148}.\bigskip }

{\small Next we prove \eqref{110620149}. By the Gagliardo-Nirenberg
embedding theorem, there holds 
\begin{align*}
\int_{\Omega _{T}}|T_{k}(|u|^{m-1}u)|^{\frac{2(N+1)}{N}}dxdt& \leq
C_{1}||T_{k}(|u|^{m-1}u)||_{L^{\infty }((0,T);L^{1}(\Omega
))}^{2/N}\int_{\Omega _{T}}|\nabla T_{k}(|u|^{m-1}u)|^{2}dxdt \\
& \leq C_{1}k^{\frac{2(m-1)}{mN}}||u||_{L^{\infty }((0,T);L^{1}(\Omega
))}^{2/N}\int_{\Omega _{T}}|\nabla T_{k}(|u|^{m-1}u)|^{2}dxdt.
\end{align*}%
Thus, from \eqref{110620147} and \eqref{110620148} we get 
\begin{equation*}
k^{\frac{2(N+1)}{N}}|\{|u|^{m}>k\}|\leq \int_{\Omega
_{T}}|T_{k}(|u|^{m-1}u)|^{\frac{2(N+1)}{N}}dxdt\leq c_{1}k^{\frac{2(m-1)}{mN}%
+1}(|\sigma |(\Omega )+|\mu |(\Omega _{T}))^{\frac{N+2}{N}},
\end{equation*}%
which implies \eqref{110620149}. Finally, we prove \eqref{1106201410}.
Thanks to \eqref{110620147} and \eqref{110620149} we have for $k,k_{0}>0$ 
\begin{align*}
|\{|\nabla (|u|^{m-1}u)|>k\}|& \leq \frac{1}{k^{2}}\int_{0}^{k^{2}}|\{|%
\nabla (|u|^{m-1}u)|>\ell \}|d\ell \\
& \leq |\{|u|^{m}>k_{0}\}|+\frac{1}{k^{2}}\int_{\Omega _{T}}|\nabla
T_{k_{0}}(|u|^{m-1}u)|^{2}dxdt \\
& \leq C_{1}k_{0}^{-\frac{2}{mN}-1}(|\sigma |(\Omega )+|\mu |(\Omega _{T}))^{%
\frac{N+2}{N}}+k_{0}k^{-2}(|\sigma |(\Omega )+|\mu |(\Omega _{T})).
\end{align*}%
Choosing $k_{0}=k^{\frac{Nm}{Nm+1}}(|\sigma |(\Omega )+|\mu |(\Omega _{T}))^{%
\frac{m}{Nm+1}},$ we get \eqref{1106201410}. }
\end{proof}\medskip

{\small Next we show the necessary conditions given at Theorem \ref{NCE}. 
\newline
}

\begin{proof}[Proof of Theorem \protect\ref{NCE}]
{\small As in \cite[Proof of Proposition 3.1]{BaPi1}, it is enough to claim
that for any compact $K\subset \Omega \times \lbrack 0,T)$ such that $\mu
^{-}(K)=0$, $(\sigma ^{-}\otimes \delta _{\{t=0\}})(K)=0$ and $\text{Cap}%
_{2,1,\frac{q}{q-m},q^{\prime }}(K)=0$ then $\mu ^{+}(K)=0$ and $(\sigma
^{+}\otimes \delta _{\{t=0\}})(K)=0$. Let $\varepsilon >0$ and choose an
open set $O$ such that $(|\mu |+|\sigma |\otimes \delta
_{\{t=0\}})(O\backslash K)<\varepsilon $ and $K\subset O\subset \Omega
\times (-T,T)$. One can find a sequence $\{\varphi _{n}\}\subset
C_{c}^{\infty }(O)$ which satisfies $0\leq \varphi _{n}\leq 1$, $\left.
\varphi _{n}\right\vert _{K}=1$ and $\varphi _{n}\rightarrow 0$ in $W_{\frac{%
q}{q-m},q^{\prime }}^{2,1}(\mathbb{R}^{N+1})$ and almost everywhere in $O$
(see \cite[Proposition 2.2]{BaPi1}). We get 
\begin{align*}
\int_{\Omega _{T}}\varphi _{n}d\mu +\int_{\Omega }\varphi _{n}(0)d\sigma &
=-\int_{\Omega _{T}}u(\varphi _{n})_{t}dxdt-\int_{\Omega
_{T}}|u|^{m-1}u\Delta \varphi _{n}dxdt+\int_{\Omega _{T}}|u|^{q-1}u\varphi
_{n}dxdt \\
& \leq (||u||_{L^{q}(\Omega _{T})}+||u||_{L^{q}(\Omega _{T})}^{m})||\varphi
_{n}||_{W_{\frac{q}{q-m},\frac{q}{q-1}}^{2,1}(\mathbb{R}^{N+1})}+\int_{%
\Omega _{T}}|u|^{q}\varphi _{n}dxdt.
\end{align*}%
Note that 
\begin{align*}
\int_{\Omega _{T}}\varphi _{n}d\mu +\int_{\Omega }\varphi _{n}(0)d\sigma &
\geq \mu ^{+}(K)+(\sigma ^{+}\otimes \delta _{\{t=0\}})(K)-(|\mu |+|\sigma
|\otimes \delta _{\{t=0\}})(O\backslash K) \\
& \geq \mu ^{+}(K)+(\sigma ^{+}\otimes \delta _{\{t=0\}})(K)-\varepsilon .
\end{align*}%
This implies 
\begin{equation*}
\mu ^{+}(K)+(\sigma ^{+}\otimes \delta _{\{t=0\}})(K)\leq
(||u||_{L^{q}(\Omega _{T})}+||u||_{L^{q}(\Omega _{T})}^{m})||\varphi
_{n}||_{W_{\frac{q}{q-m},\frac{q}{q-1}}^{2,1}(\mathbb{R}^{N+1})}+\int_{%
\Omega _{T}}|u|^{q}\varphi _{n}dxdt+\varepsilon .
\end{equation*}%
Letting the limit we get $\mu ^{+}(K)+(\sigma ^{+}\otimes \delta
_{\{t=0\}})(K)\leq \varepsilon $. Therefore, $\mu ^{+}(K)=(\sigma
^{+}\otimes \delta _{\{t=0\}})(K)=0$.\medskip }
\end{proof}

{\small Next we look for sufficient conditions of existence.}
{\small The crucial result used to establish Theorem \ref%
{1106201412} is the following a priori estimates, due to of Liskevich and
Skrypnik \cite{LSS2} for $m\geq 1$ and Bogelein, Duzaar and Gianazza \cite%
{BoDuGi} for $m\leq 1$. }

\begin{theorem}
{\small \label{1106201413} Let $m>\frac{N-2}{N}$ and $\mu \in (C_{b}(\Omega
_{T}))^{+}$. Let $u\in L^{\infty}_+(\Omega _{T})$ with $u^{m}\in
L^{2}(0,T,H_{loc}^{1}(\Omega ))$ be a weak solution to equation 
\begin{equation*}
{u_{t}}-{\Delta }(u^{m})=\mu ~~\text{in }\Omega _{T}.
\end{equation*}%
Then there exists $C=C(N,m)$ such that, for almost all $(y,\tau )\in \Omega
_{T}$ and any cylinder $\tilde{Q}_{r}(y,\tau )\subset \subset \Omega _{T},$
there holds }

\begin{description}
\item[i.] {\small if $m>1$ 
\begin{equation*}
u(y,\tau )\leq C\left( \left( \frac{1}{r^{N+2}}\int_{\tilde{Q}_{r}(y,\tau
)}|u|^{m+\frac{1}{2N}}dxdt\right) ^{\frac{2N}{1+2N}}+||u||_{L^{\infty
}((\tau -r^{2},\tau +r^{2});L^{1}(B_{r}(y)))}+1+\mathbb{I}_{2}^{2r}[\mu
](y,\tau )\right) ,
\end{equation*}
}

\item[ii.] {\small if $m\leq 1,$ 
\begin{equation*}
u(y,\tau )\leq C\left( \left( \frac{1}{r^{N+2}}\int_{\tilde{Q}_{r}(y,s)}|u|^{%
\frac{2(1+mN)}{N(1+m)}}dxdt\right) ^{\frac{2N(m+1)}{(2-N(1-m))(2+N(1+m))}%
}+1+\left( \mathbb{I}_{2}^{2r}[\mu ](y,\tau )\right) ^{\frac{2}{2-N(1-m)}%
}\right) .
\end{equation*}
}
\end{description}
\end{theorem}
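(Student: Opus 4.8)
The plan is to derive the pointwise bound by a comparison-and-iteration scheme adapted to the intrinsic geometry of the porous medium operator, in the spirit of the nonlinear potential estimates of Duzaar--Mingione and Kuusi--Mingione and making essential use of the local regularity theory of \cite{LSS2} (for $m\geq 1$) and \cite{BoDuGi} (for $m\leq 1$). Since the assertion is local, I would fix a point $(y,\tau)$ that is a Lebesgue point of $u$, a reference cylinder $\tilde{Q}_{r}(y,\tau)\subset\subset\Omega_{T}$, and work along a nested family $\tilde{Q}_{\rho_{j}}(y,\tau)$ with $\rho_{j}=2^{-j}r$. The target is a recursive control of the intrinsic averages of $u$ on these cylinders, so that their limit reproduces $u(y,\tau)$ while the accumulated contribution of $\mu$ assembles into the truncated Riesz parabolic potential.

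The first step is a comparison estimate. On each $\tilde{Q}_{\rho}(y,\tau)$ I would introduce the unique weak solution $w$ of the homogeneous problem $w_{t}-\Delta(w^{m})=0$ with $w=u$ on the parabolic boundary, and then estimate the difference $u-w$. Testing the equation for $u-w$ with a suitable truncation and using the local energy (Caccioppoli) inequality, I expect a bound of the schematic form
\[
\frac{1}{|\tilde{Q}_{\rho}|}\int_{\tilde{Q}_{\rho}(y,\tau)}|u-w|\,dx\,dt\ \leq\ C\,\Big(\frac{\mu(\tilde{Q}_{\rho}(y,\tau))}{\rho^{N}}\Big)^{\gamma_{m}}\lambda^{\beta_{m}},
\]
where $\lambda$ denotes the intrinsic scaling parameter and the exponents $\gamma_{m},\beta_{m}$ are dictated by whether the equation is degenerate ($m>1$) or singular ($m\leq 1$); this is exactly where the mass of $\mu$ on intrinsic cylinders enters.

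The second step is to exploit the regularity of the homogeneous solution $w$. Invoking the intrinsic De Giorgi / Moser iteration of \cite{LSS2} and \cite{BoDuGi}, I would obtain a local sup bound
\[
\sup_{\tilde{Q}_{\rho/2}(y,\tau)}w\ \leq\ C\Big(\frac{1}{\rho^{N+2}}\int_{\tilde{Q}_{\rho}(y,\tau)}w^{p_{m}}\,dx\,dt\Big)^{\theta_{m}}+\cdots,
\]
with $(p_{m},\theta_{m})$ matching the exponents appearing in the statement, together with an oscillation/decay estimate for the intrinsic averages of $w$. Combining this with the comparison estimate yields a recursive inequality for the averaged excess of $u$ on the cylinders $\tilde{Q}_{\rho_{j}}$, in which each scale contributes a term proportional to $\mu(\tilde{Q}_{\rho_{j}})/\rho_{j}^{N}$. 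Summing the resulting geometric series gives
\[
\sum_{j\geq 0}\frac{\mu(\tilde{Q}_{\rho_{j}}(y,\tau))}{\rho_{j}^{N}}\ \approx\ \int_{0}^{2r}\frac{\mu(\tilde{Q}_{\rho}(y,\tau))}{\rho^{N}}\,\frac{d\rho}{\rho}\ =\ \mathbb{I}_{2}^{2r}[\mu](y,\tau),
\]
while the coarsest scale produces the leading integral term over $\tilde{Q}_{r}$ and the $L^{\infty}_{t}L^{1}_{x}$ norm; passing to the limit at the Lebesgue point $(y,\tau)$ gives the claimed estimate, with the potential entering to the first power for $m>1$ and to the power $\frac{2}{2-N(1-m)}$ for $m\leq 1$.

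The main obstacle is precisely the intrinsic scaling. Because the porous medium operator is not invariant under the standard parabolic dilation, the cylinders natural for the energy and sup estimates are intrinsic ones whose time-length depends on the local size of $u$, whereas $\mathbb{I}_{2}$ is defined on the fixed cylinders $B_{\rho}(x)\times(t-\rho^{2},t+\rho^{2})$. Reconciling these two geometries --- and, above all, controlling the intrinsic parameter $\lambda$ by quantities already present on the right-hand side so that the iteration closes --- is the delicate point. It is this mismatch that forces the two distinct exponents and the two different shapes of the estimate in the degenerate and singular regimes, and that makes the cases $m>1$ and $m\leq 1$ genuinely different rather than a single unified argument.
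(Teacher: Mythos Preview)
The paper does not prove this theorem. It is stated as a quotation of known results: the sentence introducing it reads ``the following a priori estimates, due to Liskevich and Skrypnik \cite{LSS2} for $m\geq 1$ and Bogelein, Duzaar and Gianazza \cite{BoDuGi} for $m\leq 1$'', and no argument follows; the theorem is used as a black box in the proof of Corollary~\ref{1106201417}. So there is no proof in the paper to compare your proposal against.

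That said, your sketch is a reasonable outline of how results of this type are obtained in the cited literature: the comparison with the homogeneous solution on nested intrinsic cylinders, the sup-bound/oscillation decay for the homogeneous problem, and the summation of the scale-by-scale $\mu$-contributions into the truncated Riesz potential are exactly the ingredients of \cite{LSS2} and of the Duzaar--Mingione--Kuusi framework. Your identification of the intrinsic scaling as the main technical obstacle, and of the different exponent structure in the degenerate versus singular regimes, is also on point. If you want to turn this into an actual proof rather than a proof plan, the work lies precisely where you flagged it: making the comparison estimate quantitative with explicit exponents, and closing the iteration by showing the intrinsic parameter is controlled by the right-hand side; those details are the substance of \cite{LSS2} and \cite{BoDuGi}.
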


{\small As a consequence we get a new a priori estimate for the porous
medium equation: }

\begin{corollary}
{\small \label{1106201417}Let $m>\frac{N-2}{N}$ and $\mu \in C_{b}(\Omega
_{T})$. Let $u\in L^{\infty }(\Omega _{T})$ with $|u|^{m}\in
L^{2}(0,T,H_{0}^{1}(\Omega ))$ be the weak solution of problem 
\begin{equation*}
\left\{ 
\begin{array}{l}
{u_{t}}-{\Delta }(|u|^{m-1}u)=\mu ~~\text{in }\Omega _{T}, \\ 
{u}=0~~~\text{on }\partial \Omega \times (0,T), \\ 
u(0)=0~~\text{ in }~\Omega .%
\end{array}%
\right.
\end{equation*}%
Then there exists $C=C(N,m)$ such that, for a.e. $(y,\tau )\in \Omega _{T}$, 
}

\begin{description}
\item[i.] {\small if $m>1,$ 
\begin{equation}
|u(y,\tau )|\leq C\left( \left(\frac{|\mu |(\Omega _{T})}{d^{N}}%
\right)^{m_1}+|\mu |(\Omega _{T})+1+\mathbb{I}_{2}^{2d}[|\mu |](y,\tau
)\right),  \label{1106201419}
\end{equation}
}

\item[ii.] {\small if $m\leq 1$, 
\begin{equation}
|u(y,\tau)|\leq C\left( \left( \frac{|\mu |(\Omega _{T})}{d^{N}}\right)
^{m_{2}}+1+\left( \mathbb{I}_{2}^{2d_{1}}[|\mu |](y,\tau)\right) ^{\frac{2}{%
2-N(1-m)}}\right),  \label{1106201419b}
\end{equation}
}
\end{description}

{\small where $m_{1},m_{2}$ and $d$ are defined in Theorem \ref{1106201412}. 
}
\end{corollary}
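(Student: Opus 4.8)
\textbf{Reduction to nonnegative data.} Since the porous medium operator is order preserving, the plan is first to strip the sign of the data by comparison. I would let $w\ge 0$ be the weak solution of the same problem with datum $|\mu|\in C_b^{+}(\Omega_T)$ and zero initial and boundary values. As $-|\mu|\le\mu\le|\mu|$ and the equation is odd (if $w$ solves it with datum $|\mu|$, then $-w$ solves it with datum $-|\mu|$), the comparison principle gives $-w\le u\le w$, i.e. $|u|\le w$ on $\Omega_T$. By Proposition \ref{110620146} applied to $w$, the a priori bounds \eqref{110620148}--\eqref{1106201410} hold with $M:=|\mu|(\Omega_T)$ replacing $|\sigma|(\Omega)+|\mu|(\Omega_T)$. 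Thus it suffices to bound $w$ pointwise, and $w$ is exactly a nonnegative solution with nonnegative datum, the setting of Theorem \ref{1106201413}.

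\textbf{Extension and choice of scale.} Theorem \ref{1106201413} is an interior estimate needing $\tilde Q_r(y,\tau)\subset\subset\Omega_T$, which fails near $\partial\Omega$ and near $t=0,T$. I would remove this by a zero extension: set $\bar w=w$ on $\Omega_T$ and $\bar w=0$ on $\mathbb R^{N+1}\setminus\Omega_T$, and let $\bar\mu$ be the zero extension of $|\mu|$. Testing against $\varphi\ge0$ and integrating by parts, the only extra contributions are the final-time term $-\int_\Omega w(T)\varphi(\cdot,T)\le0$ and the boundary flux $\int_0^T\!\int_{\partial\Omega}\varphi\,\partial_\nu(w^m)\le0$ (since $w\ge0$ vanishes on $\partial\Omega$, so $\partial_\nu w^m\le0$), while the initial datum contributes nothing because $w(0)=0$. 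Hence $\bar w$ is a nonnegative weak \emph{subsolution} of $\bar w_t-\Delta(\bar w^m)=\bar\mu$ on all of $\mathbb R^{N+1}$. Because the pointwise bound of Theorem \ref{1106201413} is produced by a De~Giorgi/Moser upper iteration it is valid for subsolutions, so I may apply it to $\bar w$ at any $(y,\tau)\in\Omega_T$ with the single scale $r=d=\mathrm{diam}(\Omega)+T^{1/2}$. Since $\mathrm{diam}(\Omega)\le d$ and $T\le d^2$, the cylinder $\tilde Q_d(y,\tau)$ contains $\Omega_T$, so each local quantity over $\tilde Q_d$ reduces to a global one over $\Omega_T$.

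\textbf{Collecting the terms for $m>1$.} With $r=d$, Theorem \ref{1106201413}(i) would give
\[
w(y,\tau)\le C\Big(\big(\tfrac1{d^{N+2}}\!\int_{\Omega_T}w^{\,m+\frac1{2N}}dxdt\big)^{\frac{2N}{1+2N}}+\|w\|_{L^\infty((\tau-d^2,\tau+d^2);L^1(B_d(y)))}+1+\mathbb I_2^{2d}[\bar\mu](y,\tau)\Big).
\]
The last term is exactly $\mathbb I_2^{2d}[|\mu|](y,\tau)$. The $L^\infty L^1$ term is $\le\|w\|_{L^\infty((0,T);L^1(\Omega))}\le M$ by \eqref{110620148}, giving the term $|\mu|(\Omega_T)$. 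For the integral term I would invoke the Marcinkiewicz bound \eqref{110620149}, namely $\|w\|_{L^{m+2/N,\infty}(\Omega_T)}\le CM^{\frac{N+2}{mN+2}}$, together with the elementary Lorentz inequality $\int_E|g|^p\le C|E|^{1-p/s}\|g\|_{L^{s,\infty}}^p$ (for $p<s$, $|E|<\infty$) with $p=m+\frac1{2N}$, $s=m+\frac2N$ and $|\Omega_T|\le Cd^{N+2}$. Raising to the power $\frac{2N}{1+2N}$ and collecting the powers of $M$ and $d$ should then produce the term $(M/d^{N})^{m_1}$, and $|u|\le w$ yields \eqref{1106201419}.

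\textbf{The case $m\le1$ and the main obstacle.} For $\frac{N-2}N<m\le1$ the scheme is identical but rests on Theorem \ref{1106201413}(ii): the potential enters through the nonlinear power $\big(\mathbb I_2^{2d}[|\mu|]\big)^{\frac2{2-N(1-m)}}$ (the hypothesis $m>\frac{N-2}N$ ensures $2-N(1-m)>0$), there is no linear mass term, and the integral $\frac1{d^{N+2}}\int_{\Omega_T}w^{\frac{2(1+mN)}{N(1+m)}}$ is controlled by the same Lorentz inequality and \eqref{110620149}, the embedding being admissible precisely because $\frac{2(1+mN)}{N(1+m)}<m+\frac2N$ is equivalent to $m>\frac{N-2}N$; this gives $(M/d^N)^{m_2}$ and hence \eqref{1106201419b}. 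I expect the two genuine difficulties to be, first, justifying rigorously that the zero extension is a subsolution on $\mathbb R^{N+1}$ (so that one application of the interior potential estimate at the global scale $d$ is legitimate right up to the parabolic boundary, e.g. by approximation on slightly enlarged smooth cylinders), and second, the delicate algebraic verification that the combined exponents of $M$ and $d$ collapse exactly to $m_1$ (resp. $m_2$); everything else is bookkeeping built on Proposition \ref{110620146} and Theorem \ref{1106201413}.
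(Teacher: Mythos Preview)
Your overall architecture---reduce to nonnegative data by comparison, invoke the Liskevich--Skrypnik/B\"ogelein--Duzaar--Gianazza interior potential estimate at the global scale $d$, and feed in the Lorentz/Marcinkiewicz bound of Proposition~\ref{110620146}---is exactly the paper's. The treatment of the integral term via the Lorentz embedding $\int_E|g|^p\le C|E|^{1-p/s}\|g\|_{L^{s,\infty}}^p$ is equivalent to the paper's explicit layer-cake splitting at a level $\ell_0$ followed by optimisation in $\ell_0$; both give the same power of $M/d^N$.

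Where you diverge from the paper is precisely at the point you flag as the ``main obstacle'': you want to extend $w$ by zero to all of $\mathbb R^{N+1}$, argue that this is a subsolution, and then claim that Theorem~\ref{1106201413} applies to subsolutions. Both steps are nontrivial and are not provided by the statements you are allowed to cite: the boundary-flux argument for the subsolution property is formal (there is no regularity up to $\partial\Omega$), and Theorem~\ref{1106201413} is stated only for solutions. The paper sidesteps this entirely by a cleaner device: it fixes $x_0\in\Omega$, sets $Q=B_{2d}(x_0)\times(-(2d)^2,(2d)^2)$, and lets $U$ be the genuine nonnegative weak \emph{solution} on $Q$ with right-hand side $\chi_{\Omega_T}|\mu|$ and zero initial/lateral data. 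Because $\Omega_T\subset\subset Q$ and $\tilde Q_d(y,\tau)\subset\subset Q$ for every $(y,\tau)\in\Omega_T$, Theorem~\ref{1106201413} applies directly to $U$ with $r=d$, with no extension or subsolution argument needed. One then uses the comparison principle on $\Omega_T$ (not on $Q$) to get $|u|\le U$. This enlarged-cylinder trick is the missing idea that would turn your sketch into a complete proof.
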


\begin{proof}
{\small Let $x_{0}\in \Omega ,$ and $Q=B_{2d}(x_{0})\times
(-(2d)^{2},(2d)^{2}).$ Consider the function $U\in (C_b(Q))^+,$ with $%
U^{m}\in L^{p}((-(2d)^{2},(2d)^{2});H_{0}^{1}(B_{2d}(x_{0})))$ such that $U$
is weak solution of 
\begin{equation}
\left\{ 
\begin{array}{l}
{U_{t}}-{\Delta }(U^{m})=\chi _{\Omega _{T}}|\mu |\qquad \text{in }%
B_{2d}(x_{0})\times (-(2d)^{2},(2d)^{2}), \\ 
{U}=0~~~~~~~~~~~~~~~~\text{on }\partial B_{2d}(x_{0})\times
(-(2d)^{2},(2d)^{2}), \\ 
U(-(2d)^{2})=0~~~\text{ in }~B_{2d}(x_{0}).%
\end{array}%
\right.  \label{solU}
\end{equation}%
From Theorem \ref{1106201413}, we get, for a.e $(y,\tau )\in \Omega _{T},$ 
\begin{equation*}
U(y,\tau )\leq c_{1}\left( \left( \frac{1}{d^{N+2}}\int_{\tilde{Q}%
_{d}(y,\tau )}|U|^{m+\frac{1}{2N}}dxdt\right) ^{\frac{2N}{1+2N}%
}+||U||_{L^{\infty }((\tau -d^{2},\tau +d^{2});L^{1}(B_{d}(y)))}+1+\mathbb{I}%
_{2}^{2d}[|\mu |](y,\tau )\right)
\end{equation*}%
if $m>1$ and 
\begin{equation*}
U(y,\tau )\leq C\left( \left( \frac{1}{d^{N+2}}\int_{\tilde{Q}_{d}(y,s)}|u|^{%
\frac{2(1+mN)}{N(1+m)}}dxdt\right) ^{\frac{2N(m+1)}{(2-N(1-m))(2+N(1+m))}%
}+1+\left( \mathbb{I}_{2}^{2r}[\mu ](y,\tau )\right) ^{\frac{2}{2-N(1-m)}%
}\right)
\end{equation*}%
if $m\leq 1$. By Proposition \ref{110620146}, we have 
\begin{align*}
& ||U||_{L^{\infty }((\tau -d^{2},\tau +d^{2});L^{1}(B_{d}(y)))}\leq |\mu
|(\Omega _{T}), \\
& |\{{|U|>\ell }\}|\leq c_{2}(|\mu |(\Omega _{T}))^{\frac{2+N}{N}}\ell ^{-%
\frac{2}{N}-m},\qquad \forall \ell >0.
\end{align*}%
Thus, for any $\ell _{0}>0,$ 
\begin{align*}
& \int_{Q}U^{m+\frac{1}{2N}}dxdt=(m+\frac{1}{2N})\int_{0}^{\infty }\ell ^{m+%
\frac{1}{2N}-1}|\{U>\ell \}|d\ell \\
& ~~~=(m+\frac{1}{2N})\int_{0}^{\ell _{0}}\ell ^{m+\frac{1}{2N}-1}|\{U>\ell
\}|d\ell +(m+\frac{1}{2N})\int_{\ell _{0}}^{\infty }\ell ^{m+\frac{1}{2N}%
-1}|\{U>\ell \}|d\ell \\
& ~~~\leq c_{3}d^{N+2}\ell _{0}^{m+\frac{1}{2N}}+c_{4}\ell _{0}^{\frac{1}{2N}%
-\frac{2}{N}}(|\mu |(\Omega _{T}))^{\frac{2+N}{N}}.
\end{align*}%
Choosing $\ell _{0}=\left( \frac{|\mu |(\Omega _{T})}{d^{N}}\right) ^{\frac{%
N+2}{mN+2}}$, we get 
\begin{equation*}
\int_{Q}U^{(\lambda +1)(p-1)}dxdt\leq c_{5}d^{N+2}\left( \frac{|\mu |(\Omega
_{T})}{d^{N}}\right) ^{\frac{(N+2)(2mN+1)}{2mN(mN+2)}}.
\end{equation*}%
Thus, for a.e $(y,\tau )\in \Omega _{T},$ 
\begin{equation*}
U(y,\tau )\leq c_{6}\left( \left( \frac{|\mu |(\Omega _{T})}{d^{N}}\right)
^{m_{1}}+|\mu |(\Omega _{T})+1+\mathbb{I}_{2}^{2d}[|\mu |](y,\tau )\right)
\end{equation*}%
if $m>1$. Similarly, we also obtain for a.e $(y,\tau )\in \Omega _{T},$ 
\begin{equation*}
U(y,\tau )\leq c_{7}\left( \left( \frac{|\mu |(\Omega _{T})}{d^{N}}\right)
^{m_{2}}+1+\left( \mathbb{I}_{2}^{2d_{1}}[|\mu |](y,\tau )\right) ^{\frac{2}{%
2-N(1-m)}}\right) .
\end{equation*}%
if $m\leq 1$. By the comparison principle we get $|u|\leq U$ in $\Omega _{T}$%
, and \eqref{1106201419}-\eqref{1106201419b} follow.\medskip }
\end{proof}

\begin{lemma}
{\small \label{1106201420}Let $g\in C_{b}(\mathbb{R)}$ be nondecreasing with 
$g(0)=0$, and $\mu \in C_{b}(\Omega _{T})$. There exists a weak solution $%
u\in L^{\infty }(\Omega _{T})$ with $|u|^{m}\in L^{2}(0,T,H_{0}^{1}(\Omega
)) $ of problem 
\begin{equation}
\left\{ 
\begin{array}{l}
{u_{t}}-{\Delta }(|u|^{m-1}u)+g(u)=\mu ~~\text{in }\Omega _{T}, \\ 
{u}=0~~~\text{on }\partial \Omega \times (0,T), \\ 
u(0)=0~~~\text{ in }~\Omega .%
\end{array}%
\right.  \label{GO}
\end{equation}
}

{\small Moreover, the comparison principle holds for these solutions: if $%
u_{1},u_{2} $ are weak solutions of (\ref{GO}) when $(\mu ,g)$ is replaced
by $(\mu _{1},g_{1})$ and $(\mu _{2},g_{2})$, where $\mu _{1},\mu _{2}\in
C_b(\Omega _{T})$ with $\mu _{1}\geq \mu _{2}$ and $g_{1},g_{2}$ have the
same properties as $g$ with $g_{1}\leq g_{2}$ in $\mathbb{R}$ then $%
u_{1}\geq u_{2}$ in $\Omega _{T}$. }

{\small As a consequence, if $\mu \geq 0$ then $u\geq 0$. }
\end{lemma}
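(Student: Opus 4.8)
The plan is to obtain the solution of (\ref{GO}) as a limit of solutions of a nondegenerate, smooth approximating family, to derive bounds that are uniform along the approximation, and then to prove the comparison principle by an $L^{1}$-contraction argument based on testing the difference of the two equations with a regularization of $\mathrm{sign}^{+}$. Write $\Phi(s)=|s|^{m-1}s$, a strictly increasing odd homeomorphism of $\mathbb{R}$. For the \textbf{construction} I would, for $\varepsilon>0$ and $n\in\mathbb{N}$, replace $\Phi$ by the nondegenerate $\Phi_{\varepsilon}(s)=|s|^{m-1}s+\varepsilon s$ and $g$ by a smooth nondecreasing $g_{n}$ with $g_{n}(0)=0$, $\|g_{n}\|_{\infty}\le\|g\|_{\infty}$ and $g_{n}\to g$ locally uniformly. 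The problem $u_{t}-\Delta\Phi_{\varepsilon}(u)+g_{n}(u)=\mu$, $u=0$ on $\partial\Omega\times(0,T)$, $u(0)=0$, is uniformly parabolic with smooth structure, so classical quasilinear parabolic theory yields a unique bounded solution $u_{\varepsilon,n}$ regular enough to be used as its own test function.

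For the \textbf{uniform estimates}, spatially constant super/subsolutions $\bar u,\underline u$ solving $\bar u'+g_{n}(\bar u)=\|\mu\|_{\infty}$, $\bar u(0)=0$ (and symmetrically with $-\|\mu\|_{\infty}$), give via the maximum principle for the nondegenerate problem the bound $\|u_{\varepsilon,n}\|_{L^{\infty}(\Omega_{T})}\le\|\mu\|_{\infty}T=:M$, uniform in $\varepsilon,n$ (here $g_{n}(0)=0$ and monotonicity keep $\bar u$ from growing faster than the source). Testing the equation with $\Phi_{\varepsilon}(u_{\varepsilon,n})$ and using $g_{n}(s)\Phi_{\varepsilon}(s)\ge0$ yields $\int_{\Omega_{T}}|\nabla\Phi_{\varepsilon}(u_{\varepsilon,n})|^{2}\le C$ uniformly, so $\Phi_{\varepsilon}(u_{\varepsilon,n})$ is bounded in $L^{2}(0,T;H_{0}^{1}(\Omega))$. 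From the equation, $\partial_{t}u_{\varepsilon,n}$ is bounded in $L^{2}(0,T;H^{-1}(\Omega))+L^{1}(\Omega_{T})$, since $g_{n}(u_{\varepsilon,n})$ is bounded in $L^{\infty}$ and $\mu\in C_{b}(\Omega_{T})$.

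The key \textbf{passage to the limit} is to upgrade these bounds to strong convergence of $\Phi_{\varepsilon}(u_{\varepsilon,n})$ in $L^{2}(\Omega_{T})$. I would combine the spatial $H_{0}^{1}$-bound with an equicontinuity-in-time estimate for $\int_{\Omega}(u(t+h)-u(t))(\Phi_{\varepsilon}(u(t+h))-\Phi_{\varepsilon}(u(t)))\,dx$, which is controlled through the equation and a monotonicity inequality for $\Phi_{\varepsilon}$ valid uniformly on $[-M,M]$; an Aubin--Lions--Simon argument then extracts a subsequence with $\Phi_{\varepsilon}(u_{\varepsilon,n})\to\chi$ strongly in $L^{2}$ and, since $\Phi_{\varepsilon}^{-1}\to\Phi^{-1}$ uniformly on $[-M,M]$, $u_{\varepsilon,n}\to u$ a.e., with $\chi=\Phi(u)$. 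Weak lower semicontinuity identifies $\nabla\Phi(u)\in L^{2}$, so $|u|^{m}\in L^{2}(0,T;H_{0}^{1}(\Omega))$; dominated convergence gives $g_{n}(u_{\varepsilon,n})\to g(u)$ in $L^{1}$, and $\varepsilon u_{\varepsilon,n}\to0$. Passing to the limit in the weak formulation shows that $u\in L^{\infty}(\Omega_{T})$ is a weak solution of (\ref{GO}), the datum $u(0)=0$ being preserved by the time-continuity estimate.

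For the \textbf{comparison principle}, let $u_{1},u_{2}$ be two weak solutions with data $(\mu_{1},g_{1})$, $(\mu_{2},g_{2})$ as stated, and set $w=u_{2}-u_{1}$. Subtracting the weak formulations and testing with $p_{\delta}(\Phi(u_{2})-\Phi(u_{1}))$, where $p_{\delta}$ is a smooth nondecreasing approximation of $\mathrm{sign}^{+}$, the diffusion term equals $\int|\nabla(\Phi(u_{2})-\Phi(u_{1}))|^{2}\,p_{\delta}'\ge0$ and is discarded. Because $\Phi$ is strictly increasing, $\mathrm{sign}^{+}(\Phi(u_{2})-\Phi(u_{1}))=\mathrm{sign}^{+}(w)$, so letting $\delta\to0$ in the time term — justified by the monotonicity of $\Phi$ together with $\Phi(u_{i})\in L^{2}(H_{0}^{1})$ and $\partial_{t}u_{i}\in L^{2}(H^{-1})+L^{1}$ — produces $\tfrac{d}{dt}\int_{\Omega}w^{+}\,dx$. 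On $\{u_{2}>u_{1}\}$ one has $\mu_{2}-\mu_{1}\le0$, $g_{2}(u_{2})-g_{2}(u_{1})\ge0$ by monotonicity of $g_{2}$, and $g_{2}(u_{1})-g_{1}(u_{1})\ge0$ by $g_{1}\le g_{2}$, so the right-hand side is $\le0$; with $w^{+}(0)=0$ this forces $w^{+}\equiv0$, i.e.\ $u_{2}\le u_{1}$. Comparing the solution $u$ (source $\mu\ge0$) with the admissible solution $u_{2}\equiv0$ (source $0\le\mu$, same $g$, using $g(0)=0$) gives $u\ge0$. The \textbf{main obstacle} is the combined analytic heart of both halves: the strong-convergence/compactness step for the degenerate diffusion and the rigorous handling of the time-derivative pairing (in the limit passage and in the sign-function testing), where $\partial_{t}u$ is only a sum of a dual-space and an $L^{1}$ term; the monotonicity structure of $\Phi$ is exactly what makes both go through.
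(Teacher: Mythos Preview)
Your overall strategy---regularize the degenerate diffusion, derive uniform $L^{\infty}$ and energy bounds, pass to the limit by a compactness argument---is the same as the paper's, and your execution of the existence part is sound. The paper regularizes differently, replacing the coefficient $m|s|^{m-1}$ by a truncated version $a_{n}(s)$ bounded above and below (rather than adding $\varepsilon s$ to $\Phi$); this has the minor advantage of giving a uniformly parabolic problem also when $m<1$, where your $\Phi_{\varepsilon}'(s)=m|s|^{m-1}+\varepsilon$ is unbounded near $0$. Your compactness step via the translate estimate and Aubin--Lions--Simon is equivalent in spirit to what the paper does; the paper instead bounds $\partial_{t}\Phi_{n}(u_{n})$ (for an antiderivative $\Phi_{n}$ of $|A_{n}|$) in $L^{1}+L^{2}(H^{-1})$ and concludes relative compactness of $\Phi_{n}(u_{n})$ in $L^{1}(\Omega_{T})$, then uses an explicit pointwise inequality to transfer this to $|u_{n}|^{m}u_{n}$.

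The genuine divergence is in the comparison principle. You attempt a direct Kato/$L^{1}$-contraction argument at the level of weak solutions, testing the difference with $p_{\delta}(\Phi(u_{2})-\Phi(u_{1}))$; the delicate point you correctly identify is justifying the chain rule for the time term when $\partial_{t}u_{i}\in L^{2}(H^{-1})+L^{1}$. The paper bypasses this entirely: it proves comparison only for the \emph{constructed} solutions, by invoking the classical maximum principle for the nondegenerate approximations (citing Lieberman) to get $u_{n}\le\tilde u_{n}$ at each step, and then passing to the limit. This is considerably simpler and is all the lemma asserts (``the comparison principle holds for \emph{these} solutions''). Your route would yield a slightly stronger statement (comparison for arbitrary weak solutions) but at the cost of the technical work you flag; the paper's route buys simplicity and avoids the time-pairing issue altogether.
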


\begin{proof}[Proof of Lemma \protect\ref{1106201420}]
{\small Set $a_{n}(s)=m|s|^{m-1}$ if $1/n\leq |s|\leq n$ and $%
a_{n}(s)=m|n|^{m-1}$ if $|s|\geq n$, $a_{n}(s)=m(1/n)^{m-1}$ if $|s|\leq 1/n$%
. Also $A_{n}(\tau )=\int_{0}^{\tau }a_{n}(s)ds$. Then one can find $u_{n}$
being a weak solution to the following equation 
\begin{equation}
\left\{ 
\begin{array}{l}
{(u_{n})_{t}}-\text{div}(a_{n}(u_{n})\nabla u_{n})+g(u_{n})=\mu ~~\text{in }%
\Omega _{T}, \\ 
{u_{n}}=0~~~\text{on }\partial \Omega \times (0,T), \\ 
u_{n}(0)=0~~~\text{ in }~\Omega .%
\end{array}%
\right.  \label{ABC}
\end{equation}%
It is easy to see that $|u_{n}(x,t)|\leq t||\mu ||_{L^{\infty }(\Omega
_{T})} $ for all $(x,t)\in \Omega _{T}$. Thus, choosing $A_{n}(u_{n})$ as a
test function, we obtain 
\begin{equation}
\int_{\Omega _{T}}|\nabla A_{n}(u_{n})|^{2}dxdt\leq C_{1}(T,||\mu
||_{L^{\infty }(\Omega _{T})}).  \label{HIJ}
\end{equation}%
Now set $\Phi _{n}(\tau )=\int_{0}^{\tau }|A_{n}(s)|ds$. Choosing $%
|A_{n}(u_{n})|\varphi $ as a test function in \eqref{ABC}, where $\varphi
\in C_{c}^{2,1}(\Omega _{T})$, we get the relation in $~\mathcal{D}^{\prime
}(\Omega _{T}):$ 
\begin{equation*}
(\Phi _{n}(u_{n}))_{t}-\text{div}(|A_{n}(u_{n})|\nabla A_{n}(u_{n}))+\nabla
A_{n}(u_{n}).\nabla |A_{n}(u_{n})|+|A_{n}(u_{n})|g(u_{n})=|A_{n}(u_{n})|\mu .
\end{equation*}%
Hence, 
\begin{align*}
||(\Phi _{n}(u_{n}))_{t}||_{L^{1}(\Omega _{T})+L^{2}((0,T);H^{-1}(\Omega
))}& \leq ||A_{n}(u_{n})\nabla A_{n}(u_{n})||_{L^{2}(\Omega _{T})}+||\nabla
A_{n}(u_{n})|||_{L^{2}(\Omega _{T})}^{2} \\
& ~~+||A_{n}(u_{n})g(u_{n})||_{L^{1}(\Omega _{T})}+||A_{n}(u_{n})\mu
||_{L^{1}(\Omega _{T})}.
\end{align*}%
Combining this with (\ref{HIJ}) and the estimate $|A_{n}(u_{n})|\leq
C_{2}(T,||\mu ||_{L^{\infty }(\Omega )})$, we deduce that 
\begin{equation*}
\sup_{n}||(\Phi _{n}(u_{n}))_{t}||_{L^{1}(\Omega
_{T})+L^{2}(0,T,H^{-1}(\Omega ))}<\infty .
\end{equation*}%
On the other hand, since $|A_{n}(u_{n})|\leq |u_{n}|a_{n}(u_{n})\leq T||\mu
||_{L^{\infty }(\Omega )}a_{n}(u_{n}),$ there holds 
\begin{align*}
\int_{\Omega _{T}}|\nabla \Phi _{n}(u_{n})|^{2}dxdt& =\int_{\Omega
_{T}}|A_{n}(u_{n})|^{2}|\nabla u_{n}|^{2}dxdt\leq T||\mu ||_{L^{\infty
}(\Omega )}\int_{\Omega _{T}}|a_{n}(u_{n})|^{2}|\nabla u_{n}|^{2}dxdt \\
& \leq T||\mu ||_{L^{\infty }(\Omega )}\int_{\Omega _{T}}|\nabla
A_{n}(u_{n})|^{2}dxdt\leq C_{3}(T,||\mu ||_{L^{\infty }(\Omega )}).
\end{align*}%
Therefore, $\Phi _{n}(u_{n})$ is relatively compact in $L^{1}(\Omega _{T})$.
Note that 
\begin{equation*}
{\Phi _{n}}(s)=\left\{ 
\begin{array}{l}
\frac{m}{2}{\left( {\frac{1}{n}}\right) ^{m}}|s|^{2}\text{sign}(s)~~~\text{
if }~|s|\leq \frac{1}{n} \\ 
(m-1){\left( {\frac{1}{n}}\right) ^{m}}\left( {|s|-\frac{1}{n}}\right) \text{%
sign}(s)+\frac{1}{{m+1}}\left( {|s{|^{m+1}}-{{\left( {\frac{1}{n}}\right) }%
^{m+1}}}\right) \text{sign}(s)~~\text{ if }\frac{1}{n}\leq |s|\leq n. \\ 
\end{array}%
\right.
\end{equation*}%
So, for every $n_{1},n_{2}\geq n$ and $|s_{1}|,|s_{2}|\leq T||\mu
||_{L^{\infty }(\Omega )},$ 
\begin{equation*}
\frac{1}{m+1}||s_{1}|^{m}s_{1}-|s_{2}|^{m}s_{2}|\leq C_{4}(m,T||\mu
||_{L^{\infty }(\Omega )})\left( \frac{1}{n}\right) ^{m}+|{\Phi _{n_{1}}}%
(s_{1})-{\Phi _{n_{2}}}(s_{2})|.
\end{equation*}%
Hence, for any $\varepsilon >0,$ 
\begin{equation*}
\left\vert \left\{ \frac{1}{m+1}%
||u_{n_{1}}|^{m}u_{n_{1}}-|u_{n_{1}}|^{m}u_{n_{1}}|>2\varepsilon \right\}
\right\vert \leq |\left\{ |{\Phi _{n_{1}}}(u_{n_{1}})-{\Phi _{n_{2}}}%
(u_{n_{2}})|>\varepsilon \right\} |,
\end{equation*}%
for all $n_{1},n_{2}\geq \left( C_{4}(m,T||\mu ||_{L^{\infty }(\Omega
)})/\varepsilon \right) ^{1/m}$. Thus, up to a subsequence $\{u_{n}\}$
converges a.e in $\Omega _{T}$ to a function $u$. From (\ref{ABC}) we can
write 
\begin{equation*}
-\int_{\Omega _{T}}u_{n}\varphi _{t}dxdt-\int_{\Omega
_{T}}A_{n}(u_{n})\Delta \varphi dxdt+\int_{\Omega _{T}}g(u_{n})\varphi
dxdt=\int_{\Omega _{T}}\varphi d\mu ,
\end{equation*}%
for any $\varphi \in C_{c}^{2,1}(\Omega _{T})$. Thanks to the dominated
convergence Theorem we deduce that 
\begin{equation*}
-\int_{\Omega _{T}}u\varphi _{t}dxdt-\int_{\Omega _{T}}|u|^{m-1}u\Delta
\varphi dxdt+\int_{\Omega _{T}}g(u)\varphi dxdt=\int_{\Omega _{T}}\varphi
d\mu .
\end{equation*}%
By Fatou's lemma and \eqref{HIJ} we also get $|u|^{m}\in
L^{2}((0,T);H_{0}^{1}(\Omega ))$.\newline
Furthermore, by the classic maximum principle, see \cite[Theorem 9.7]{Li5},
if $\left\{ \tilde{u}_{n}\right\} $ is a sequence of solutions to equations %
\eqref{ABC} where $(g,\mu )$ is replaced by $(h,\nu )$ such that $\nu \in
C_{b}(\Omega _{T})$ with $\nu \geq \mu $ and $h$ has the same properties as $%
g$ satisfying $h\leq g$ in $\mathbb{R}$, then, $u_{n}\leq \tilde{u}_{n}$. As 
$n\rightarrow \infty $, we get $u\leq \tilde{u}$. This achieves the proof. }
\end{proof}

\begin{lemma}
{\small \label{1106201421}Let $m>\frac{N-2}{N}$ and $g:\mathbb{R}\rightarrow 
\mathbb{R}$ be a nondecreasing function, such that $g\in C_{b}( \mathbb{R})$%
, $g(0)=0$, and let $\mu \in \mathcal{M}_{b}(\Omega _{T})$. There exists a
very weak solution $u$ of equation \eqref{GO} which satisfies %
\eqref{1106201419}-\eqref{1106201419b} and 
\begin{equation}  \label{090720142}
\int_{\Omega _{T}}|g(u)|dxdt\leq |\mu |(\Omega _{T}), ~~
||u||_{L^{m+2/N,\infty }(\Omega _{T})}\leq C(|\mu |(\Omega _{T}))^{\frac{N+2%
}{mN+2}}.
\end{equation}
where $C=C(m,N)>0$. Moreover, the comparison principle holds for these
solutions: if $u_{1},u_{2} $ are very weak solutions of \eqref{GO} when $%
(\mu ,g)$ is replaced by $(\mu _{1},g_{1})$ and $(\mu _{2},g_{2})$, where $%
\mu _{1},\mu _{2}\in \mathcal{M}_{b}(\Omega _{T})$ with $\mu _{1}\geq \mu
_{2}$ and $g_{1},g_{2}$ have the same properties as $g$ with $g_{1}\leq
g_{2} $ in $\mathbb{R}$ then $u_{1}\geq u_{2}$ in $\Omega _{T}$. }
\end{lemma}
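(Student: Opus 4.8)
The plan is to obtain $u$ by approximating the measure $\mu$ with continuous bounded data and passing to the limit, Lemma~\ref{1106201420} providing the solutions at the approximate level together with their comparison principle. First I would fix a nonnegative mollifier and set $\mu_n$ to be a mollification of $\mu$, chosen so that $\mu_n\in C_b(\Omega_T)$, $|\mu_n|(\Omega_T)\le|\mu|(\Omega_T)$, $\mu_n\rightharpoonup\mu$ weakly-$*$ in $\mathcal M_b(\Omega_T)$, and so that the truncated potentials satisfy $\mathbb I_2^{2d}[|\mu_n|]\to\mathbb I_2^{2d}[|\mu|]$ a.e.\ (or are bounded by a fixed multiple of $\mathbb I_2^{Cd}[|\mu|]$), which holds for a suitable choice by the standard behaviour of $\mathbb I_2$ under mollification. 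Since convolution with a nonnegative kernel preserves order, I would use the same scheme for the two data of the comparison statement, so that $\mu_{1,n}\ge\mu_{2,n}$ whenever $\mu_1\ge\mu_2$. Lemma~\ref{1106201420} then furnishes weak solutions $u_n\in L^\infty(\Omega_T)$ with $|u_n|^m\in L^2(0,T;H_0^1(\Omega))$ of \eqref{GO} with datum $\mu_n$.

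Next I would collect uniform estimates. Because the equation is odd ($-u_n$ solves the same problem with datum $-\mu_n$ and absorption $\tilde g(s)=-g(-s)$, which has the same properties as $g$), the comparison principle of Lemma~\ref{1106201420} gives $|u_n|\le U_n$, where $U_n\ge0$ solves the pure porous medium problem with datum $|\mu_n|$ and zero absorption. Corollary~\ref{1106201417} then yields \eqref{1106201419}-\eqref{1106201419b} for $U_n$ with $|\mu|$ replaced by $|\mu_n|$, hence uniform pointwise bounds for $u_n$; Proposition~\ref{110620146} applied to $U_n$ gives $\|u_n\|_{L^{m+2/N,\infty}(\Omega_T)}\le C(|\mu_n|(\Omega_T))^{(N+2)/(mN+2)}\le C(|\mu|(\Omega_T))^{(N+2)/(mN+2)}$ together with a uniform bound on $\nabla(|u_n|^{m-1}u_n)$ in $L^{(mN+2)/(mN+1),\infty}(\Omega_T)$ via \eqref{1106201410}. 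The absorption bound $\int_{\Omega_T}|g(u_n)|\,dxdt\le|\mu_n|(\Omega_T)\le|\mu|(\Omega_T)$ follows by testing the equation for $u_n$ with $k^{-1}T_k(|u_n|^{m-1}u_n)$, discarding the nonnegative diffusion and time contributions and letting $k\to0$, using that $g$ nondecreasing with $g(0)=0$ forces $g(u_n)\,\mathrm{sign}(u_n)=|g(u_n)|$.

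With these bounds I would extract a convergent subsequence. The uniform control of $\nabla(|u_n|^{m-1}u_n)$ yields spatial compactness of $z_n:=|u_n|^{m-1}u_n$, while the equation bounds $\partial_t u_n$ in $L^1(\Omega_T)+L^{s}(0,T;W^{-1,s}(\Omega))$ for some $s>1$ (the source $g(u_n)+\mu_n$ being bounded in $L^1$). A time--space compactness argument adapted to the power map $z=|u|^{m-1}u$ then gives, up to a subsequence, $u_n\to u$ and $z_n\to|u|^{m-1}u$ a.e.\ and in $L^1(\Omega_T)$. I expect this to be the main obstacle, since the standard Aubin--Simon lemma does not apply directly: the quantity enjoying spatial regularity ($z_n$) and the one whose time derivative is controlled ($u_n$) are linked only through a nonlinear power, and the time regularity is merely at the level of measures. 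Once a.e.\ convergence is in hand, the rest is routine: $g(u_n)\to g(u)$ in $L^1(\Omega_T)$ by dominated convergence since $g\in C_b(\mathbb R)$, and every term of the very weak formulation converges, with $\int_{\Omega_T}\varphi\,d\mu_n\to\int_{\Omega_T}\varphi\,d\mu$ by weak-$*$ convergence; the membership $u\in L^{\max\{m,1\}}(\Omega_T)$ follows from the $L^{m+2/N,\infty}$ bound since $m+2/N>\max\{m,1\}$ when $m>\frac{N-2}{N}$. Thus $u$ is a very weak solution of \eqref{GO}, and the estimates \eqref{1106201419}-\eqref{1106201419b} and \eqref{090720142} pass to $u$ by a.e.\ convergence and Fatou's lemma, using the convergence of the potentials arranged at the first step.

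Finally, the comparison principle follows by building the two families with the same mollification, so that $\mu_1\ge\mu_2$ and $g_1\le g_2$ give $\mu_{1,n}\ge\mu_{2,n}$ and $g_1\le g_2$; Lemma~\ref{1106201420} then yields $u_{1,n}\ge u_{2,n}$ for every $n$, and this inequality survives the a.e.\ limit, so $u_1\ge u_2$ in $\Omega_T$.
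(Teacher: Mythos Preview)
Your overall scheme---mollify, solve via Lemma~\ref{1106201420}, estimate, pass to the limit, and carry the comparison through the approximation---is exactly the paper's. The genuine gap is the compactness step you yourself flag: spatial regularity sits on $z_n=|u_n|^{m-1}u_n$ while time regularity sits on $u_n$, Aubin--Simon does not apply, and you provide no bridge. (A side issue: the gradient bound \eqref{1106201410} does not transfer from $U_n$ to $u_n$ through the pointwise inequality $|u_n|\le U_n$. You would have to derive it directly for $u_n$; this is possible because the absorption term has the right sign in the truncation-energy estimate, but it is not a consequence of comparison.)

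The paper closes the compactness gap by truncating in the dependent variable. For each $l>0$ one picks $S_l\in C_c^2(\mathbb R)$ with $S_l(a)=|a|^{m}a$ for $|a|\le l$ and $S_l$ constant for $|a|\ge 2l$. Since $S_l'$ is supported in $\{|u_n|\le 2l\}$, testing the equation with $S_l'(u_n)\varphi$ and using only the truncation energy $\int_{\Omega_T}|\nabla T_{(2l)^m}(|u_n|^{m-1}u_n)|^2\,dxdt\le (2l)^m|\mu|(\Omega_T)$ bounds $(S_l(u_n))_t$ in $L^1(\Omega_T)+L^2(0,T;H^{-1}(\Omega))$; the same energy bounds $\{S_l(u_n)\}$ in $L^2(0,T;H_0^1(\Omega))$. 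Hence $\{S_l(u_n)\}_n$ is relatively compact in $L^1(\Omega_T)$ for every fixed $l$. The weak-Lebesgue tail estimate $|\{|u_n|>l\}|\le C\,l^{-m-2/N}|\mu|(\Omega_T)^{(N+2)/N}$ then upgrades this, via a Cauchy-in-measure argument, to convergence in measure of $|u_n|^{m}u_n$ and hence a.e.\ convergence of $u_n$ along a subsequence. The point is that the truncation $S_l$ realigns the quantity enjoying spatial regularity with the one enjoying time regularity, and the tail bound absorbs what the truncation discards.
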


\begin{proof}
{\small Let $\left\{ \mu _{n}\right\} $ be a sequence in $C_{c}^{\infty
}(\Omega _{T})$ converging to $\mu $ in $\mathcal{M}_{b}(\Omega _{T}),$ such
that $|\mu _{n}|\leq \varphi _{n}\ast |\mu |$ and $|\mu _{n}|(\Omega
_{T})\leq |\mu |(\Omega _{T})$ for any $n\in \mathbb{N}$ where $\left\{
\varphi _{n}\right\} $ is a sequence of mollifiers in $\mathbb{R}^{N+1}$. By
Lemma \ref{1106201420} there exists a very weak solution $u_{n}$ of problem 
\begin{equation*}
\left\{ 
\begin{array}{l}
{(u_{n})_{t}}-{\Delta }(|u_{n}|^{m-1}u_{n})+g(u_{n})=\mu _{n}~~\text{in }%
\Omega _{T}, \\ 
{u_{n}}=0~~~\text{on }\partial \Omega \times (0,T), \\ 
u_{n}(0)=0~~~\text{ in }~\Omega ,%
\end{array}%
\right.
\end{equation*}%
which satisfies for a.e $(y,\tau )\in \Omega _{T}$, 
\begin{align*}
|u_{n}(y,\tau )|& \leq C\left( \left( \frac{|\mu |(\Omega _{T})}{d^{N}}%
\right) ^{m_{1}}+|\mu |(\Omega _{T})+1+\varphi _{n}\ast \mathbb{I}%
_{2}^{2d}[|\mu |](y,\tau )\right) ~\text{\quad if }m>1, \\
|u_{n}(y,\tau )|& \leq C\left( \left( \frac{|\mu |(\Omega _{T})}{d^{N}}%
\right) ^{m_{2}}+1+\left( \varphi _{n}\ast \mathbb{I}_{2}^{2d_{1}}[|\mu
|](y,\tau )\right) ^{\frac{2}{2-N(1-m)}}\right) ~\text{\quad if }m\leq 1,
\end{align*}%
and 
\begin{align}
& \int_{\Omega _{T}}|\nabla T_{k}(|u_{n}|^{m-1}u_{n})|^{2}dxdt\leq k|\mu
|(\Omega _{T}),\qquad \forall k>0,  \label{1106201424} \\
& |\{|u_{n}|>\ell \}|\leq C_{1}\ell ^{-\frac{2}{N}-m}|\mu |(\Omega _{T})^{%
\frac{N+2}{N}},\qquad \forall \ell >0,  \label{1106201425} \\
& \int_{\Omega _{T}}|g(u_{n})|dxdt\leq |\mu |(\Omega _{T}).  \notag
\end{align}%
For $l>0$, we consider $S_{l}\in C_{c}^{2}(\mathbb{R})$ such that 
\begin{equation*}
S_{l}(a)=|a|^{m}a,\quad \text{for }|a|\leq l,\quad \text{and\quad }%
S_{l}(a)=(2l)^{m+1}\text{sign}(a),\quad \text{for }|a|\geq 2l.
\end{equation*}%
Then we find the relation in $\mathcal{D}^{^{\prime }}(\Omega _{T}):$ 
\begin{equation*}
(S_{l}(u_{n}))_{t}-\text{div}\left( S_{l}^{^{\prime }}(u_{n})\nabla
(|u_{n}|^{m-1}u_{n})\right) +m|u_{n}|^{m-1}|\nabla u_{n}|^{2}S_{l}^{^{\prime
\prime }}(u_{n})+g(u_{n})S_{l}^{^{\prime }}(u_{n})=S_{l}^{^{\prime
}}(u_{n})\mu _{n}.
\end{equation*}%
It leads to 
\begin{align*}
||(S_{l}(u_{n}))_{t}||_{L^{1}(\Omega _{T})+L^{2}(0,T,H^{-1}(\Omega ))}& \leq
||S_{l}^{^{\prime }}(u_{n})\nabla (|u_{n}|^{m-1}u_{n})||_{L^{2}(\Omega
_{T})}+m|||u_{n}|^{m-1}|\nabla u_{n}|^{2}S_{l}^{^{\prime \prime
}}(u_{n})||_{L^{1}(\Omega _{T})} \\
& ~~+||g(u_{n})S_{l}^{^{\prime }}(u_{n})||_{L^{1}(\Omega
_{T})}+||S_{l}^{^{\prime }}(u_{n})\mu _{n}||_{L^{1}(\Omega _{T})}.
\end{align*}%
Since $|S_{l}^{^{\prime }}(u_{n})|\leq C_{2}\chi _{\lbrack -2l,2l]}(u_{n})$
and $|S_{l}^{^{\prime \prime }}(u_{n})|\leq C_{3}|u_{n}|^{m-1}\chi _{\lbrack
-2l,2l]}(u_{n})$, we obtain 
\begin{equation*}
||(S_{l}(u_{n}))_{t}||_{L^{1}(\Omega _{T})+L^{2}(0,T,H^{-1}(\Omega ))}\leq
C_{4}\left( ||\nabla T_{(2l)^{m}}(|u_{n}|^{m-1}u_{n})||_{L^{2}(\Omega
_{T})}+||g||_{L^{\infty }(\mathbb{R})}|\Omega _{T}|+|\mu _{n}|(\Omega
_{T})\right) .
\end{equation*}%
So from \eqref{1106201424} we deduce that $\left\{
(S_{l}(u_{n}))_{t}\right\} $ is bounded in $L^{1}(\Omega
_{T})+L^{2}((0,T);H^{-1}(\Omega ))$ and for any $n\in \mathbb{N},$ 
\begin{equation*}
||(S_{l}(u_{n}))_{t}||_{L^{1}(\Omega _{T})+L^{2}((0,T);H^{-1}(\Omega ))}\leq
C_{4}\left( (2l)^{m/2}(|\mu |(\Omega _{T}))^{1/2}+||g||_{L^{\infty }(\mathbb{%
R})}|\Omega _{T}|+|\mu |(\Omega _{T})\right) .
\end{equation*}%
Moreover, $\left\{ S_{l}(u_{n})\right\} $ is bounded in $%
L^{2}(0,T,H_{0}^{1}(\Omega ))$. Hence, $\left\{ S_{l}(u_{n})\right\} $ is
relatively compact in $L^{1}(\Omega _{T})$ for any $l>0$. Thanks to %
\eqref{1106201425} we find 
\begin{align*}
|\{||u_{n_{1}}|^{m}u_{n_{1}}-|u_{n_{1}}|^{m}u_{n_{1}}|>\ell \}|& \leq
|\{|u_{n_{1}}|>l\}|+|\{|u_{n_{2}}|>l\}|+|%
\{|S_{l}(u_{n_{1}})-S_{l}(u_{n_{2}})|>\ell \}| \\
& \leq 2C_{2}l^{-\frac{2}{N}-m}|\mu |(\Omega _{T})^{\frac{N+2}{N}%
}+|\{|S_{l}(u_{n_{1}})-S_{l}(u_{n_{2}})|>\ell \}|.
\end{align*}%
Thus, up to a subsequence $\{u_{n}\}$ converges a.e in $\Omega _{T}$ to a
function $u$. Consequently, $u$ is a very weak solution of equation %
\eqref{GO} and satisfies \eqref{090720142} and \eqref{1106201419}-%
\eqref{1106201419b}. The other conclusions follow in the same way. }
\end{proof}

\begin{remark}
{\small \label{090720141} If $\text{supp}(\mu )\subset \overline{\Omega }%
\times \lbrack a,T]$ for $a>0$, then the solution $u$ in Lemma \ref%
{1106201421} satisfies $u=0$ in $\Omega \times \lbrack 0,a)$. }
\end{remark}

{\small Now we recall the important property of Radon measures which was
proved in \cite{BaPi2} and \cite{H1}. }

\begin{proposition}
{\small \label{1006201410} Let $s>1$ and $\mu \in \mathcal{M}_{b}^{+}(\Omega
_{T})$. If $\mu $ is absolutely continuous with respect to $\text{Cap}%
_{2,1,s^{\prime }}$ in $\Omega _{T}$, there exists a nondecreasing sequence $%
\{\mu _{n}\}\subset \mathcal{M}_{b}^{+}(\Omega _{T})$, with compact support
in $\Omega_T$ which converges to $\mu $ weakly in $\mathcal{M}_{b}(\Omega
_{T})$ and satisfies $\mathbb{I}_{2}^{R}[\mu _{n}]\in L_{loc}^{s}(\mathbb{R}%
^{N+1})$ for all $R>0$. }
\end{proposition}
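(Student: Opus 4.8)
The plan is to deduce the statement from two ingredients: a dual (potential-energy) characterisation of the condition $\mathbb{I}_{2}^{R}[\nu]\in L_{loc}^{s}$, and the description of measures not charging sets of zero $\text{Cap}_{2,1,s'}$-capacity as increasing weak limits of finite-energy measures. Throughout I may assume $\mu$ has compact support in $\Omega_T$: writing $\Omega_T=\bigcup_j K_j$ with $K_j\subset K_{j+1}$ compact and setting $\tilde\mu_j=\chi_{K_j}\mu$ produces a nondecreasing sequence with $\tilde\mu_j\uparrow\mu$ weakly, each $\tilde\mu_j$ still absolutely continuous with respect to the capacity. A diagonal argument then reduces the whole statement to approximating a single compactly supported diffuse measure, and the restriction to $K_n$ performed below will restore compact support in general.

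First I record the potential-theoretic identification. For a positive measure $\nu$ with compact support in $\mathbb{R}^{N+1}$, membership $\mathbb{I}_{2}^{R}[\nu]\in L_{loc}^{s}(\mathbb{R}^{N+1})$ is equivalent to $\nu$ having finite energy relative to $\text{Cap}_{2,1,s'}$, i.e. to the bound
\begin{equation*}
\left| \int \varphi\, d\nu \right| \le C\,\|\varphi\|_{W_{s'}^{2,1}(\mathbb{R}^{N+1})}\qquad\text{for all }\varphi\in C_{c}^{\infty}(\mathbb{R}^{N+1}).
\end{equation*}
This is the parabolic analogue of the Hedberg--Wolff theorem and rests on Muckenhoupt--Wheeden/Wolff-potential estimates for the kernel defining $\mathbb{I}_{2}$ together with the capacity comparison of \cite[Corollary 4.21]{H1}. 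A consequence I will use is that the class of positive measures with $\mathbb{I}_{2}^{R}[\cdot]\in L_{loc}^{s}$ is stable under finite suprema, since $\mathbb{I}_{2}^{R}[\sup_{k\le n}\nu_{k}]\le\sum_{k\le n}\mathbb{I}_{2}^{R}[\nu_{k}]$.

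Next I invoke the structure of diffuse measures. The Feyel--de la Pradelle description used in \cite{BaPi2} asserts that a positive $\mu$ is absolutely continuous with respect to $\text{Cap}_{2,1,s'}$ if and only if it is the weak limit of a nondecreasing sequence $\{\nu_{n}\}$ of positive measures of finite energy (should the cited construction deliver only some such sequence, replacing $\nu_{n}$ by $\sup_{k\le n}\nu_{k}$ makes it nondecreasing without destroying the energy bound, by the previous paragraph). Combining this with the identification above, each $\nu_{n}$ satisfies $\mathbb{I}_{2}^{R}[\nu_{n}]\in L_{loc}^{s}$ for every $R$. Setting $\mu_{n}=\chi_{K_{n}}\nu_{n}$ keeps the sequence nondecreasing, because $\chi_{K_{n}}\le\chi_{K_{n+1}}$ and $\nu_{n}\le\nu_{n+1}$; it gives compact support; it preserves the potential bound since $\mathbb{I}_{2}^{R}[\mu_{n}]\le\mathbb{I}_{2}^{R}[\nu_{n}]$; and it still converges weakly to $\mu$, for any $\varphi\in C_{c}(\Omega_T)$ supported in some $K_{m}$ satisfies $\int\varphi\, d\mu_{n}=\int\varphi\, d\nu_{n}\to\int\varphi\, d\mu$ once $n\ge m$. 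This furnishes the required sequence.

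The main obstacle is the identification of the second paragraph: proving, in the anisotropic geometry of the cylinders $\tilde Q_{\rho}$, that $\mathbb{I}_{2}^{R}[\nu]\in L_{loc}^{s}$ is equivalent to the finite-energy (dual) condition, and that diffuse measures are exactly the increasing limits of finite-energy measures. Both facts hinge on Wolff-potential bounds adapted to the parabolic kernel and on the capacity comparison of \cite{H1}; granting them, the monotone construction and the weak convergence are routine. The only point requiring care is that the lattice operation $\sup_{k\le n}$ and the level operations inside the cited decomposition not enlarge the support beyond control, which is precisely why the reduction to compactly supported $\mu$ and the truncation by $K_{n}$ are carried out first.
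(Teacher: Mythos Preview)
The paper does not prove this proposition at all: it is stated as a recalled fact ``which was proved in \cite{BaPi2} and \cite{H1}'' and then used as a black box. So there is no in-paper argument to compare against; your sketch is an attempt to outline what those references contain.

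Your outline is broadly the right shape and matches the strategy behind \cite{BaPi2,H1}: (i) characterise measures absolutely continuous with respect to $\text{Cap}_{2,1,s'}$ as increasing limits of finite-energy measures (Feyel--de la Pradelle / Baras--Pierre), and (ii) identify finite energy with $\mathbb{I}_{2}^{R}[\nu]\in L^{s}_{loc}$ via a parabolic Hedberg--Wolff / Muckenhoupt--Wheeden inequality. The cut-off and diagonal steps you describe are routine and correct; in particular your monotonicity check $\chi_{K_n}\nu_n\le\chi_{K_{n+1}}\nu_{n+1}$ and the weak-convergence argument are fine. You are also right that the substantive content is entirely in the two ``granted'' facts: the anisotropic Wolff-type estimate for the kernel of $\mathbb{I}_2$ and the capacity comparison from \cite{H1}. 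Since the paper itself defers those to the literature, your proposal is at the same level of detail as what the paper provides, only with the skeleton of the cited argument made explicit. If you wanted to make this self-contained you would need to supply the parabolic Wolff inequality for $\tilde Q_\rho$-cylinders; everything else in your write-up is sound.
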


{\small Next we prove Theorem \ref{1106201412} in several steps of
approximation:\medskip\newline
}

\begin{proof}[Proof of Theorem \protect\ref{1106201412}]
{\small First suppose $m>1.$ Assume that $\mu ,\sigma $ are absolutely
continuous with respect to the capacities $\text{Cap}_{2,1,q^{\prime }}$ in $%
\Omega _{T} $ and $\text{Cap}_{\mathbf{G}_{\frac{2}{q}},q^{\prime }}$ in $%
\Omega $. Then $\sigma ^{+}\otimes \delta _{\{t=0\}}+\mu ^{+},\sigma
^{-}\otimes \delta _{\{t=0\}}+\mu ^{-}$ are absolutely continuous with
respect to the capacities $\text{Cap}_{2,1,q^{\prime }}$ in $\Omega \times
(-T,T)$. Applying Proposition \ref{1006201410} to $\sigma ^{+}\otimes \delta
_{\{t=0\}}+\mu ^{+},\sigma ^{-}\otimes \delta _{\{t=0\}}+\mu ^{-}$, there
exist two nondecreasing sequences $\{\upsilon _{1,n}\}$ and $\{\upsilon
_{2,n}\}$ of positive bounded measures with compact support in $\Omega
\times (-T,T)$ which converge respectively to $\sigma ^{+}\otimes \delta
_{\{t=0\}}+\mu ^{+}$ and $\sigma ^{-}\otimes \delta _{\{t=0\}}+\mu ^{-}$ in $%
\mathcal{M}_{b}(\Omega \times (-T,T))$ and such that $\mathbb{I}%
_{2}^{2d_{1}}[\upsilon _{1,n}],\mathbb{I}_{2}^{2d_{1}}[\upsilon _{2,n}]\in
L^{q}(\Omega \times (-T,T))$ for all $n\in \mathbb{N}$. By Lemma \ref%
{1106201421}, there exists a sequence $\{u_{n_{1},n_{2},k_{1},k_{2}}\}$ of
of weak solution of the problems 
\begin{equation*}
\left\{ 
\begin{array}{l}
{(u_{n_{1},n_{2},k_{1},k_{2}})_{t}}-{\Delta }%
(|u_{n_{1},n_{2},k_{1},k_{2}}|^{m-1}u_{n_{1},n_{2},k_{1},k_{2}})+T_{k_{1}}((u_{n_{1},n_{2},k_{1},k_{2}}^{+})^{q})
\\ 
~~~~~~~~~~~~-T_{k_{2}}((u_{n_{1},n_{2},k_{1},k_{2}}^{-})^{q})=\upsilon
_{1,n_{1}}-\upsilon _{2,n_{2}}~~\text{in }\Omega \times (-T,T), \\ 
{u_{n_{1},n_{2},k_{1},k_{2}}}=0~~~\text{on }\partial \Omega \times (-T,T),
\\ 
u_{n_{1},n_{2},k_{1},k_{2}}(-T)=0~~~\text{ in }~\Omega ,%
\end{array}%
\right.
\end{equation*}%
which satisfy 
\begin{align}
| u_{n_{1},n_{2},k_{1},k_{2}}|\leq C\left( \left(\frac{|\sigma |(\Omega
)+|\mu|(\Omega _{T})}{d^{N}}\right)^{m_1}+|\sigma|(\Omega )+|\mu|(\Omega
_{T})+1+\mathbb{I}_{2}^{2d}[\upsilon _{1,n_{1}}+\upsilon _{2,n_{2}}]\right) ,
\label{1106201428}
\end{align}%
and 
\begin{equation*}
\int_{\Omega
_{T}}T_{k_{1}}((u_{n_{1},n_{2},k_{1},k_{2}}^{+})^{q})dxdt+\int_{\Omega
_{T}}T_{k_{2}}((u_{n_{1},n_{2},k_{1},k_{2}}^{-})^{q})dxdt\leq |\mu |(\Omega
_{T}).
\end{equation*}%
Moreover, for any $n_{1}\in \mathbb{N},k_{2}>0$, $%
\{u_{n_{1},n_{2},k_{1},k_{2}}\}_{n_{2},k_{1}}$ is non-increasing and for any 
$n_{2}\in \mathbb{N},k_{1}>0$, $\{u_{n_{1},n_{2},k_{1},k_{2}}%
\}_{n_{1},k_{2}} $ is non-decreasing. Therefore, thanks to the fact that $%
\mathbb{I}_{2}^{2d_{1}}[\upsilon _{1,n}],\;\mathbb{I}_{2}^{2d_{1}}[\upsilon
_{2,n}]\in L^{q}(\Omega \times (-T,T))$ and from \eqref{1106201428} and the
dominated convergence Theorem, we deduce that $u_{n_{1},n_{2}}=\lim%
\limits_{k_{1}\rightarrow \infty }\lim\limits_{k_{2}\rightarrow \infty
}u_{n_{1},n_{2},k_{1},k_{2}}$ is a very weak solution of 
\begin{equation*}
\left\{ 
\begin{array}{l}
{(u_{n_{1},n_{2}})_{t}}-{\Delta }%
(|u_{n_{1},n_{2}}|^{m-1}u_{n_{1},n_{2}})+|u_{n_{1},n_{2}}|^{q-1}u_{n_{1},n_{2}}=\upsilon _{1,n_{1}}-\upsilon _{2,n_{2}}~~%
\text{in }\Omega \times (-T,T), \\ 
{u_{n_{1},n_{2}}}=0~~~\text{on }\partial \Omega \times (-T,T), \\ 
u_{n_{1},n_{2}}(-T)=0~~~\text{ in }~\Omega .%
\end{array}%
\right.
\end{equation*}%
And \eqref{1106201428} is true when $u_{n_{1},n_{2},k_{1},k_{2}}$ is
replaced by $u_{n_{1},n_{2}}$. Note that $\{u_{n_{1},n_{2}}\}_{n_{1}}$ is
non-increasing, $\{u_{n_{1},n_{2}}\}_{n_{2}}$ is non-decreasing and 
\begin{equation*}
\int_{\Omega _{T}}|u_{n_{1},n_{2}}|^{q}dxdt\leq |\mu |(\Omega _{T})~~\forall
~n_{1},n_{2}\in \mathbb{N}.
\end{equation*}%
From the monotone convergence Theorem we obtain that $u=\lim\limits_{n_{2}%
\rightarrow \infty }\lim\limits_{n_{1}\rightarrow \infty }u_{n_{1},n_{2}}$
is a very weak solution of 
\begin{equation*}
\left\{ 
\begin{array}{l}
{u_{t}}-{\Delta }(|u|^{m-1}u)+|u|^{q-1}u=\sigma \otimes \delta
_{\{t=0\}}+\chi _{\Omega _{T}}\mu ~~\text{in }\Omega \times (-T,T), \\ 
{u}=0~~~\text{on }\partial \Omega \times (-T,T), \\ 
u(-T)=0~~~\text{ in }~\Omega .%
\end{array}%
\right.
\end{equation*}%
which $u=0$ in $\Omega \times (-T,0)$ and $u$ satisfies \eqref{1106201430}.
Clearly, $u$ is a very weak solution of equation \eqref{one}. \newline
Next suppose $m\leq 1$. The proof is similar, with the new capacitary
assumptions and (\ref{1106201430}) is replaced by \eqref{1106201430b}%
.\medskip }
\end{proof}

{\small We also obtain the subcritical case. }

\begin{theorem}
{\small \label{subcri} Let $m>\frac{N-2}{N}$ and $0<q<m+\frac{2}{N}$. Then
problem \eqref{one} has a very weak solution for any $\mu \in \mathcal{M}%
_{b}(\Omega _{T})$ and $\sigma \in \mathcal{M}_{b}(\Omega ).$ }
\end{theorem}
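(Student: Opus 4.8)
The plan is to run the same monotone approximation scheme as in the proof of Theorem~\ref{1106201412}, but to discard the capacitary hypotheses and replace them by the single structural gain of subcriticality: the a priori bound in the Marcinkiewicz space $L^{m+2/N,\infty}$ from \eqref{090720142} forces the absorption terms to be equi-integrable as soon as $q<m+\tfrac2N$, so no hypothesis on $\mu,\sigma$ is needed to pass to the limit in the nonlinear zero-order term. First I would work on the extended cylinder $\Omega\times(-T,T)$ and encode the initial datum as an interior source, setting $\nu=\sigma\otimes\delta_{\{t=0\}}+\chi_{\Omega_T}\mu\in\mathcal M_b(\Omega\times(-T,T))$ and looking for $u$ with $u(-T)=0$; as in the proof of Theorem~\ref{1106201412}, such a $u$ vanishes on $\Omega\times(-T,0)$ and restricts on $\Omega_T$ to a very weak solution of \eqref{one}. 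I would then choose $\nu_n\in C_c^\infty(\Omega\times(-T,T))$ with $\nu_n\to\nu$ weakly and $|\nu_n|(\Omega\times(-T,T))\le|\sigma|(\Omega)+|\mu|(\Omega_T)$.

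Next, for fixed $n$ I would construct a very weak solution $u_n$ of the problem with data $\nu_n$ and full absorption $|u_n|^{q-1}u_n$ by truncating the absorption as $g_k(s)=T_k(|s|^{q-1}s)$, applying Lemma~\ref{1106201421} (each $g_k\in C_b(\mathbb R)$ is nondecreasing with $g_k(0)=0$), and letting $k\to\infty$ along the monotone sequence furnished by the comparison principle, exactly as in the passage $k_1,k_2\to\infty$ in the proof of Theorem~\ref{1106201412}. Passing to the limit by monotone convergence and using \eqref{090720142}, the resulting solutions inherit, uniformly in $n$, the bounds
\begin{equation*}
\int_{\Omega\times(-T,T)}|u_n|^q\,dxdt\le|\sigma|(\Omega)+|\mu|(\Omega_T),\qquad \|u_n\|_{L^{m+2/N,\infty}}\le C\big(|\sigma|(\Omega)+|\mu|(\Omega_T)\big)^{\frac{N+2}{mN+2}}.
\end{equation*}
Note that since $m>\tfrac{N-2}{N}$ we have $m+\tfrac2N>\max\{m,1\}$, so $u_n\in L^{\max\{m,1\}}$ uniformly and the regularity required of a very weak solution is met.

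Finally I would pass to the limit $n\to\infty$. Using the truncating test function $S_l$ from the proof of Lemma~\ref{1106201421}, whose derivative is compactly supported, the product $|u_n|^{q-1}u_n\,S_l'(u_n)$ stays bounded in $L^1$ (the factor $S_l'$ localizes $u_n$ to $[-2l,2l]$), so $\{(S_l(u_n))_t\}$ remains bounded in $L^1+L^2((0,T);H^{-1}(\Omega))$ and $\{S_l(u_n)\}$ in $L^2((0,T);H_0^1(\Omega))$. By an Aubin--Simon compactness argument each $\{S_l(u_n)\}$ is then relatively compact in $L^1$, and combined with the tail estimate coming from the uniform $L^{m+2/N,\infty}$ bound this yields a subsequence with $u_n\to u$ a.e.\ and $|u_n|^{m-1}u_n\to|u|^{m-1}u$ in $L^1$.

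The only delicate term is the absorption, and this is exactly where the hypothesis $q<m+\tfrac2N$ enters decisively: the uniform bound $\|u_n\|_{L^{m+2/N,\infty}}\le C$ gives $|u_n|^{q-1}u_n$ bounded in $L^{(m+2/N)/q,\infty}$ with exponent $(m+2/N)/q>1$, hence equi-integrable on the finite-measure cylinder. Together with the a.e.\ convergence, Vitali's theorem gives $|u_n|^{q-1}u_n\to|u|^{q-1}u$ in $L^1$. Passing to the limit in the weak formulation (the source term converging by weak convergence of $\nu_n$) produces the desired very weak solution. The main obstacle is precisely this combination of a.e.\ convergence and equi-integrability of the absorption term; once these are secured, the absence of any capacity condition is automatic, in contrast to the supercritical range treated in Theorem~\ref{1106201412}.
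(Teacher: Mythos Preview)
Your strategy is sound and hits the decisive point---the subcritical exponent $q<m+\tfrac2N$ makes the uniform Marcinkiewicz bound \eqref{090720142} do all the work on the absorption term, via equi-integrability and Vitali. But the route you take is noticeably more elaborate than the paper's. The paper does \emph{not} regularize the measure: after the extended-cylinder reduction to zero initial datum, it applies Lemma~\ref{1106201421} directly with the rough data $\mu$ and the bounded nondecreasing absorptions $g_{k_1,k_2}(s)=T_{k_1}((s^+)^q)-T_{k_2}((s^-)^q)$, obtaining very weak solutions $u_{k_1,k_2}$ with the uniform bound $\|u_{k_1,k_2}\|_{L^{m+2/N,\infty}}\le C(|\mu|(\Omega_T))^{(N+2)/(mN+2)}$. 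The comparison principle in Lemma~\ref{1106201421} then makes $\{u_{k_1,k_2}\}_{k_1}$ and $\{u_{k_1,k_2}\}_{k_2}$ monotone, so the two limits $k_1\to\infty$, $k_2\to\infty$ are taken by monotone convergence, with the Marcinkiewicz bound (hence $L^s$-boundedness for every $s<m+\tfrac2N$, in particular $s=q$ and $s=\max\{m,1\}$) guaranteeing that all terms in the very weak formulation pass to the limit.

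Your approach introduces an extra approximation layer $\nu_n\to\nu$ and then needs the Aubin--Simon/$S_l$ compactness machinery to extract a.e.\ convergence along $n$; this is correct but avoidable. Note also a small wrinkle: your single truncation $g_k(s)=T_k(|s|^{q-1}s)$ is not globally monotone in $k$ (it increases on $\mathbb R^+$ and decreases on $\mathbb R^-$), so the comparison principle does not directly give a monotone sequence in $k$; you implicitly fix this by invoking ``the passage $k_1,k_2\to\infty$'', i.e.\ the two-parameter truncation the paper actually uses. A second technical point is that the $S_l$ computation you quote requires testing the equation with $S_l'(u_n)\varphi$, which is justified for weak solutions but not a priori for the very weak solutions produced by Lemma~\ref{1106201421}; one has to carry the $S_l$ estimates from the smooth approximants inside that lemma through the limit. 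None of this is fatal, but the paper's argument sidesteps all of it: monotone limits supply pointwise convergence for free, and no smoothing of $\mu$ is needed.
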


\begin{proof}
{\small As the proof of Theorem \ref{1106201412}, we can reduce to the case $%
\sigma=0$. By Lemma \ref{1106201421}, there exists a very weak solution $%
u_{k_1,k_2}$ of 
\begin{equation*}
\left\{ 
\begin{array}{l}
{(u_{k_1,k_2})_{t}}-{\Delta }%
(|u_{k_1,k_2}|^{m-1}u_{k_1,k_2})+T_{k_1}((u_{k_1,k_2}^+)^q)-T_{k_2}((u_{k_1,k_2}^-)^q)=\mu ~~%
\text{in }\Omega _{T}, \\ 
{u_{n}}=0~~~\text{on }\partial \Omega \times (0,T), \\ 
u_{n}(0)=0~~~\text{ in }~\Omega.%
\end{array}%
\right.
\end{equation*}%
such that $\{u_{k_1,k_2}\}_{k_1}$ and $\{u_{k_1,k_2}\}_{k_2}$ are monotone
sequences and 
\begin{equation*}
||u_{k_1,k_2}||_{L^{m+2/N,\infty }(\Omega _{T})}\leq C(|\mu |(\Omega _{T}))^{%
\frac{N+2}{mN+2}}.
\end{equation*}
In particular, $\{u_{k_1,k_2}\}$ is a uniformly bounded in $L^s(\Omega_T)$
for any $0<s<m+\frac{2}{N}$.\newline
Therefore, we get that $u=\lim\limits_{k_{2}\rightarrow \infty
}\lim\limits_{k_{1}\rightarrow \infty }u_{k_{1},k_{2}}$ is a very weak
solution of \eqref{one}. This completes the proof. }
\end{proof}

{\small \medskip }

{\small Next, from an idea of \cite[Theorem 2.3]{BiH}, we obtain an
existence result \textit{for measures which present a good behaviour in time}%
: }

\begin{theorem}
{\small \label{1106201432} Let $m>\frac{N-2}{N},$ $q>\max (1,m)$ and $f\in
L^{1}(\Omega _{T})$, $\mu \in \mathcal{M}_{b}(\Omega _{T})$, such that 
\begin{equation*}
|\mu |\leq \omega \otimes F~~~\text{ for some }\omega \in \mathcal{M}%
_{b}^{+}(\Omega )\text{ and }F\in L^{1}_+((0,T)).
\end{equation*}%
If $\omega $ is absolutely continuous with respect to the capacity $\text{Cap%
}_{\mathbf{G}_{2},\frac{q}{q-m}}$ in $\Omega ,$ then there exists a very
weak solution to problem 
\begin{equation}
\left\{ 
\begin{array}{l}
{u_{t}}-{\Delta }(|u|^{m-1}u)+|u|^{q-1}u=f+\mu ~~\text{in }\Omega _{T}, \\ 
{u}=0~~~\text{on }\partial \Omega \times (0,T), \\ 
u(0)=0.%
\end{array}%
\right.  \label{0906201433}
\end{equation}
}
\end{theorem}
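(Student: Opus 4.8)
The plan is to follow the approximation-and-monotonicity scheme of the proof of Theorem \ref{1106201412}, replacing the parabolic capacitary input by a \emph{stationary} (elliptic) one that exploits the factorised bound $|\mu|\le\omega\otimes F$. First, by the comparison principle of Lemma \ref{1106201421} it suffices to produce the solution when the data is nonnegative and fully separated: writing $f=f^{+}-f^{-}$ and dominating $|\mu|\le\omega\otimes F$, I will sandwich the solution between the solutions obtained from the data $f^{+}+\omega\otimes F$ and $-f^{-}-\omega\otimes F$, so that the whole construction reduces to the single building block $\omega\otimes F$ with $\omega\in\mathcal M_b^{+}(\Omega)$, $F\in L^1_+((0,T))$. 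For the spatial factor I will use the elliptic analogue of Proposition \ref{1006201410} (Baras--Pierre / Feyel--de La Pradelle): since $\omega$ is absolutely continuous with respect to $\text{Cap}_{\mathbf G_2,\frac{q}{q-m}}$, there is a nondecreasing sequence $\{\omega_k\}\subset\mathcal M_b^{+}(\Omega)$ of compactly supported measures converging weakly to $\omega$ with elliptic Riesz potentials $\mathbf I_2[\omega_k](x)=\int_0^{2d}\omega_k(B_\rho(x))\rho^{2-N}\tfrac{d\rho}{\rho}$ bounded in $L^{q/m}(\Omega)$; this is where the conjugate exponent $\big(\tfrac{q}{q-m}\big)'=\tfrac{q}{m}$ enters. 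Mollifying $F$ in time and $f$ in space--time, Lemma \ref{1106201421} produces approximate solutions $u_{n,k}$ of the truncated problems, monotone in the parameters exactly as in Theorem \ref{1106201412}.

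The decisive point is a uniform bound on the absorption, namely $\iint_{\Omega_T}|u_{n,k}|^{q}\,dx\,dt\le C$ together with the equi-integrability of $\{|u_{n,k}|^{q}\}$. The target integrability is dictated by the degenerate homogeneity of the equation: the stationary estimate controls $|u|^{m}$ (not $|u|$) by a potential of $\omega$, so that $|u|^{q}=(|u|^{m})^{q/m}$ is governed by the $(q/m)$-th power of that potential, and $(\mathbf I_2[\omega])^{q/m}\in L^1(\Omega)$ is precisely what the capacity assumption on $\omega$ guarantees. Concretely, discarding the (favourable) absorption term and invoking the comparison solution of Corollary \ref{1106201417}, it is enough to estimate the pure porous-medium solution with datum $\omega\otimes F$ through the pointwise bound of Theorem \ref{1106201413}, whose leading term is the parabolic potential $\mathbb I_2^{2d}[\omega\otimes F]$.

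The main obstacle is to convert this \emph{parabolic} potential of the space--time measure into the \emph{elliptic} potential of $\omega$, using only $F\in L^1$. The mechanism is the local time-mass identity
\begin{equation*}
\mathbb I_2^{2d}[\omega\otimes F](x,t)=\int_0^{2d}\frac{\omega(B_\rho(x))}{\rho^{N}}\Big(\int_{t-\rho^2}^{t+\rho^2}F(s)\,ds\Big)\frac{d\rho}{\rho},
\end{equation*}
in which the inner integral carries a factor $\sim\rho^{2}$ that turns the parabolic weight $\rho^{-N}$ into the elliptic weight $\rho^{2-N}$ of $\mathbf I_2[\omega]$, modulated by a local time-average of $F$. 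Raising to the relevant power, integrating in $(x,t)$ and applying Fubini, the time variable is integrated out against $\int_{t-\rho^2}^{t+\rho^2}F$, whose total $t$-mass is $\le 2\rho^2\|F\|_{L^1}$; this decouples the two factors and yields a bound of the form $C(\|F\|_{L^1})\int_\Omega(\mathbf I_2^{2d}[\omega])^{q/m}\,dx<\infty$. The genuinely delicate part is handling the nonlinearity $(\cdot)^{q/m}$ simultaneously with the time-spikiness of a merely integrable $F$; this is exactly the estimate adapted from \cite[Theorem 2.3]{BiH}, and carrying it out (rather than the lossy pointwise-in-time domination $\int_{t-\rho^2}^{t+\rho^2}F\le\|F\|_{L^1}$, which throws away the $\rho^2$ gain and fails for unbounded $F$) is the crux of the proof. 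The contribution of $f\in L^1(\Omega_T)$ is handled routinely, since an $L^1$ datum produces an equi-integrable absorption with no capacitary restriction.

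Finally I would pass to the limit as in Theorem \ref{1106201412}: the monotonicity of $\{u_{n,k}\}$ in each parameter together with the monotone and dominated convergence theorems gives a.e. convergence to some $u$, while the uniform $L^q$-bound and equi-integrability obtained above let me pass to the limit in the absorption term, so that $|u_{n,k}|^{q-1}u_{n,k}\to|u|^{q-1}u$ in $L^1(\Omega_T)$. Recombining the positive and negative building blocks through the comparison principle, $u$ is a very weak solution of \eqref{0906201433} with $u(0)=0$.
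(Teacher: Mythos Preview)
Your overall architecture (approximate, use monotonicity, pass to the limit) matches the paper, and you correctly identify the homogeneity heuristic: it is $|u|^{m}$, not $|u|$, that should be controlled by a potential of $\omega$, so that $|u|^{q}=(|u|^{m})^{q/m}$ is dominated by $(\mathbf I_2[\omega])^{q/m}\in L^{1}(\Omega)$, which is exactly the content of the capacity assumption $\text{Cap}_{\mathbf G_2,\frac{q}{q-m}}$. The gap is in your ``Concretely'' step: you implement this via Corollary \ref{1106201417}/Theorem \ref{1106201413}, whose output is a bound on $|u|$ by the \emph{parabolic} potential $\mathbb I_2^{2d}[\omega\otimes F]$, not a bound on $|u|^{m}$. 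To get $|u|^{q}\in L^{1}$ by that route you would need $\bigl(\mathbb I_2^{2d}[\omega\otimes F]\bigr)^{q}\in L^{1}(\Omega_T)$; even for bounded $F$ (where $\int_{t-\rho^{2}}^{t+\rho^{2}}F\le 2\rho^{2}\|F\|_\infty$ and hence $\mathbb I_2^{2d}[\omega\otimes F]\le C\,\mathbf I_2[\omega]$ pointwise) this only reduces to $\mathbf I_2[\omega]\in L^{q}(\Omega)$, i.e.\ the stronger capacity $\text{Cap}_{\mathbf G_2,q'}$, not $\text{Cap}_{\mathbf G_2,\frac{q}{q-m}}$. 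Your Fubini argument recovers the missing $\rho^{2}$ only \emph{after} integrating in $t$ at the linear level, and does not survive raising to the $q$-th power; deferring this to \cite[Theorem 2.3]{BiH} does not help, since that result proceeds by a different mechanism.

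The paper avoids the parabolic potential entirely. After truncating $F$ and approximating $\omega$ so that $\mu_{i,n}\le \tilde\omega_n\otimes\chi_{(0,T)}$ with $\tilde\omega_n=n(\chi_\Omega+\omega_n)$ and $\mathbf I_2^{2\,\mathrm{diam}(\Omega)}[\tilde\omega_n]\in L^{q/m}(\Omega)$, it compares with the \emph{stationary} barrier: let $v_n$ solve the elliptic problem $-\Delta v_n=\tilde\omega_n$ in $\Omega$, $v_n=0$ on $\partial\Omega$. Since $v_n^{1/m}$ is time-independent, it is a supersolution of the porous medium equation with right-hand side $\tilde\omega_n\otimes\chi_{(0,T)}$, and the comparison principle gives $-v_{n_2}\le |u_{n_1,n_2,k_1,k_2}|^{m-1}u_{n_1,n_2,k_1,k_2}\le v_{n_1}$. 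Thus $|u|^{m}\le v_n\le c\,\mathbf I_2^{2\,\mathrm{diam}(\Omega)}[\tilde\omega_n]\in L^{q/m}(\Omega)$, which is precisely the bound your heuristic predicted, obtained in one line without any parabolic-to-elliptic conversion. Replace your parabolic-potential step by this stationary barrier and the rest of your argument goes through.
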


\begin{proof}
{\small For $R\in (0,\infty ]$, we define the $R$-truncated Riesz elliptic
potential of a measure $\nu \in \mathcal{M}_{b}^{+}(\Omega )$ by 
\begin{equation*}
\mathbf{I}_{2}^{R}[\nu ](x)=\int_{0}^{R}\frac{\nu (B_{\rho }(x))}{\rho ^{N-2}%
}\frac{d\rho }{\rho }~~~\forall x\in \Omega .
\end{equation*}%
By \cite[Theorem 2.6]{BiHV},there exists sequence $\{\omega _{n}\}\subset 
\mathcal{M}_{b}^{+}(\Omega )$ with compact support in $\Omega $ which
converges to $\omega $ in $\mathcal{M}_{b}(\Omega )$ and such that $\mathbf{I%
}_{2}^{2\text{diam}(\Omega )}[\omega _{n}]\in L^{q/m}(\Omega )$ for any $%
n\in \mathbb{N}$. We can write 
\begin{equation*}
f+\mu =\mu _{1}-\mu _{2},\qquad \mu _{1}=f^{+}+\mu ^{+},\qquad \mu
_{2}=f^{-}+\mu ^{-},
\end{equation*}%
and $\mu ^{+},\mu ^{-}\leqq \omega \otimes F.$ We set 
\begin{equation*}
\mu _{1,n}=T_{n}(f^{+})+\inf \{\mu ^{+},\omega _{n}\otimes T_{n}(F)\},\qquad
\mu _{2,n}=T_{n}(f^{-})+\inf \{\mu ^{-},\omega _{n}\otimes T_{n}(F)\}.
\end{equation*}%
Then $\left\{ \mu _{1,n}\right\} ,\left\{ \mu _{2,n}\right\} $ are
nondecreasing sequences converging to $\mu _{1},\mu _{2}$ respectively in $%
\mathcal{M}_{b}(\Omega _{T})$ and $\mu _{1,n},\mu _{2,n}\leq \tilde{\omega}%
_{n}\otimes \chi _{(0,T)},$ with $\tilde{\omega}_{n}=n(\chi _{\Omega
}+\omega _{n})$ and $\mathbf{I}_{2}^{2\text{diam}(\Omega )}[\tilde{\omega}%
_{n}]\in L^{q/m}(\Omega )$. As in the proof of Theorem \ref{1106201412},
there exists a sequence of weak solution $\{u_{n_{1},n_{2},k_{1},k_{2}}\}$
of equations 
\begin{equation}
\left\{ 
\begin{array}{l}
{(u_{n_{1},n_{2},k_{1},k_{2}})_{t}}-{\Delta }%
(|u_{n_{1},n_{2},k_{1},k_{2}}|^{m-1}u_{n_{1},n_{2},k_{1},k_{2}})+T_{k_{1}}((u_{n_{1},n_{2},k_{1},k_{2}}^{+})^{q})
\\ 
~~~~~~~~~~~~-T_{k_{2}}((u_{n_{1},n_{2},k_{1},k_{2}}^{-})^{q})=\mu
_{1,n_{1}}-\mu _{2,n_{2}}~\text{in }\Omega_T, \\ 
{u_{n_{1},n_{2},k_{1},k_{2}}}=0~~~\text{on }\partial \Omega \times (0,T), \\ 
u_{n_{1},n_{2},k_{1},k_{2}}(0)=0~~~\text{ in }~\Omega .%
\end{array}%
\right.
\end{equation}%
Using the comparison principle as in \cite{BiH}, we can assume that 
\begin{equation*}
-v_{n_{2}}\leq
|u_{n_{1},n_{2},k_{1},k_{2}}|^{m-1}u_{n_{1},n_{2},k_{1},k_{2}}\leq v_{n_{1}},
\end{equation*}%
where for any $n\in \mathbb{N},$ $v_{n}$ is a nonnegative weak solution of 
\begin{equation*}
\left\{ 
\begin{array}{l}
-{\Delta }v_{n}=\tilde{\omega}_{n}~~\text{in }\Omega , \\ 
{u_{n}}=0~~~\text{on }\partial \Omega ,%
\end{array}%
\right.
\end{equation*}%
such that 
\begin{equation*}
v_{n}\leq c_{1}\mathbf{I}_{2}^{2\text{diam}(\Omega )}[\tilde{\omega}%
_{n}]~~\forall ~n\in \mathbb{N}.
\end{equation*}%
Hence, utilizing the arguments in the proof of Theorem \ref{1106201412}, it
is easy to obtain the result as desired.\medskip }
\end{proof}

{\small It is easy to show that $\omega \otimes \chi _{\lbrack 0,T]}$ is
absolutely continuous with respect to the capacities $\text{Cap}_{2,1,\frac{q%
}{q-m},q^{\prime }}$ in $\Omega _{T}$ if any only if $\omega $ is absolutely
continuous with respect to the capacities $\text{Cap}_{\mathbf{G}_2,\frac{q}{%
q-m}}$ in $\Omega $. Consequently, we obtain the following: }

\begin{corollary}
{\small Let $m>\frac{N-2}{N},$ $q>\max (1,m)$ and $\omega \in \mathcal{M}%
_{b}(\Omega )$. Then, $\omega $ is absolutely continuous with respect to the
capacities $\text{Cap}_{\mathbf{G}_2,\frac{q}{q-m}}$ in $\Omega $ if and
only if there exists a very weak solution of problem 
\begin{equation}
\left\{ 
\begin{array}{l}
{u_{t}}-{\Delta }(|u|^{m-1}u)+|u|^{q-1}u=\omega \otimes \chi _{\lbrack
0,T]}~~\text{in }\Omega _{T}, \\ 
{u}=0~~~\text{on }\partial \Omega \times (0,T), \\ 
u(0)=0~~\text{ in }~\Omega .%
\end{array}%
\right.  \label{0906201434}
\end{equation}
}
\end{corollary}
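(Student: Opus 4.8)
The plan is to prove the two implications separately, reducing each to a result already established in this section combined with the capacitary equivalence recorded immediately before the statement, namely that $\omega \otimes \chi_{[0,T]}$ is absolutely continuous with respect to $\text{Cap}_{2,1,\frac{q}{q-m},q'}$ in $\Omega_T$ if and only if $\omega$ is absolutely continuous with respect to $\text{Cap}_{\mathbf{G}_2,\frac{q}{q-m}}$ in $\Omega$. Thus the genuine content of the corollary is this elliptic--parabolic equivalence, which I take as given; the remaining work amounts to matching hypotheses and keeping track of signs, with $\sigma=0$ and a constant-in-time profile $F$ throughout.

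For the sufficiency direction, assume $\omega$ is absolutely continuous with respect to $\text{Cap}_{\mathbf{G}_2,\frac{q}{q-m}}$. Since absolute continuity of a signed measure with respect to a capacity is a statement about its total variation, $|\omega|$ inherits the same property, and $|\omega|\in\mathcal{M}_b^+(\Omega)$. I then apply Theorem \ref{1106201432} with the choices $f=0$, $\mu=\omega\otimes\chi_{[0,T]}$, the positive measure $|\omega|$, and the time profile $F=\chi_{(0,T)}\in L^1_+((0,T))$. Because $\chi_{[0,T]}\geq 0$, the domination hypothesis reads $|\mu|=|\omega|\otimes\chi_{[0,T]}\leq |\omega|\otimes F$, which holds with equality, and $|\omega|$ satisfies the capacitary assumption of that theorem. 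Hence Theorem \ref{1106201432} yields a very weak solution of \eqref{0906201434}.

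For the necessity direction, suppose \eqref{0906201434} admits a very weak solution $u$. This $u$ is exactly a very weak solution of \eqref{one} with data $\mu=\omega\otimes\chi_{[0,T]}$ and $\sigma=0$, so Theorem \ref{NCE} applies and shows that $\omega\otimes\chi_{[0,T]}$ is absolutely continuous with respect to $\text{Cap}_{2,1,\frac{q}{q-m},q'}$ in $\Omega_T$, the condition on $\sigma\otimes\delta_{\{t=0\}}$ being vacuous since $\sigma=0$. Feeding this into the elliptic--parabolic equivalence quoted above then gives that $\omega$ is absolutely continuous with respect to $\text{Cap}_{\mathbf{G}_2,\frac{q}{q-m}}$ in $\Omega$, which is the desired conclusion.

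I do not expect any serious obstacle once the capacitary equivalence is available: both directions are direct applications of the two main theorems of the section. The only point requiring a moment's care is that $\omega$ may be signed, which is harmless because one works with $|\omega|$ in the existence step and because both capacitary assertions concern total variations; this is precisely what reconciles the possibly signed datum in \eqref{0906201434} with the positivity hypothesis of Theorem \ref{1106201432}.
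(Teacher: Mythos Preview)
Your proposal is correct and follows exactly the route the paper intends: the corollary is stated as an immediate consequence of the capacitary equivalence recorded just before it, combined with Theorem~\ref{1106201432} for sufficiency and Theorem~\ref{NCE} for necessity. Your handling of the signed measure $\omega$ by passing to $|\omega|$ as the dominating measure in Theorem~\ref{1106201432} is the only detail the paper leaves implicit, and you have filled it in correctly.
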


\section{{\protect\small $p-$Laplacian evolution equation\label{PL}}}

{\small Here we consider solutions in the week sense of distributions, or in
the renormalized sense,. }

\subsection{\protect\small Distribution solutions}

\begin{definition}
{\small Let $\mu \in \mathcal{M}_{b}(\Omega _{T})$, $\sigma \in \mathcal{M}%
_{b}(\Omega )$ and $B\in C(\mathbb{R})$. A measurable function $u$ is a
distribution solution to problem \eqref{050420141} if $u\in
L^{s}(0,T,W_{0}^{1,s}(\Omega ))$ for any $s\in \left[ 1,p-\frac{N}{N+1}%
\right) ,$ and $B(u)\in L^{1}(\Omega _{T}),$ such that 
\begin{equation*}
-\int_{\Omega _{T}}u\varphi _{t}dxdt+\int_{\Omega _{T}}|\nabla
u|^{p-2}\nabla u.\nabla \varphi dxdt+\int_{\Omega _{T}}B(u)\varphi
dxdt=\int_{\Omega _{T}}\varphi d\mu +\int_{\Omega }\varphi (0)d\sigma,
\end{equation*}
for every $\varphi \in C_{c}^{1}(\Omega \times \lbrack 0,T))$. }
\end{definition}

\begin{remark}
{\small \label{110620143} Let $\sigma ^{\prime }\in \mathcal{M}_{b}(\Omega )$
and $a^{\prime }\in (0,T)$, set $\omega =\mu +\sigma ^{\prime }\otimes
\delta _{\{t=a^{\prime }\}}$. Let $u$ is a distribution solution to problem %
\eqref{050420141} with data $\omega $ and $\sigma =0,$ such that $\text{supp}%
(\mu )\subset \overline{\Omega }\times \lbrack a^{\prime },T]$, and $%
u=0,B(u)=0$ in $\Omega \times (0,a^{\prime })$. Then $\tilde{u}:=\left.
u\right\vert _{\Omega \times \lbrack a^{\prime },T)}$ is a distribution
solution to problem \eqref{050420141} in $\Omega \times (a^{\prime },T)$
with data $\mu $ and $\sigma ^{\prime }$. }
\end{remark}

\subsection{\protect\small Renormalized solutions}

{\small The notion of renormalized solution is stronger. It was first
introduced by Blanchard and Murat \cite{BlMu} to obtain uniqueness results
for the $p$-Laplace evolution problem for $L^{1}$ data $\mu $ and $\sigma $,
and developed by Petitta \cite{Pe08} for measure data $\mu $. It requires a
decomposition of the measure $\mu ,$ that we recall now.\medskip }

{\small Let $\mathcal{M}_{0}(\Omega _{T})$ be the space of Radon measures in 
$\Omega _{T}$ which are absolutely continuous with respect to the $C_{p}$%
-capacity, defined at (\ref{aaa}), and $\mathcal{M}_{s}(\Omega _{T})$ be the
space of measures in $\Omega _{T}$ with support on a set of zero $C_{p}$%
-capacity. Classically, any $\mu \in \mathcal{M}_{b}(\Omega _{T})$ can be
written in a unique way under the form $\mu =\mu _{0}+\mu _{s}$ where $\mu
_{0}\in \mathcal{M}_{0}(\Omega _{T})\cap \mathcal{M}_{b}(\Omega _{T})$ and $%
\mu _{s}\in \mathcal{M}_{s}(\Omega _{T})$. In turn $\mu _{0}$ can be
decomposed under the form 
\begin{equation*}
\mu _{0}=f-\text{div}~g+h_{t},
\end{equation*}%
where $f\in L^{1}(\Omega _{T})$, $g\in (L^{p^{\prime }}(\Omega _{T}))^{N}$
and $h\in L^{p}(0,T;W_{0}^{1,p}(\Omega )),$ see \cite{DrPoPr}; and we say
that $(f,g,h)$ is a decomposition of $\mu _{0}$. We say that a sequence of $%
\left\{ \mu _{n}\right\} $ in $\mathcal{M}_{b}(\Omega _{T})$ converges to $%
\mu \in \mathcal{M}_{b}(\Omega _{T})$ in the \textit{narrow topology} of
measures if 
\begin{equation*}
\lim_{n\rightarrow \infty }\int_{\Omega _{T}}\varphi d\mu _{n}=\int_{\Omega
_{T}}\varphi d\mu ~~\text{ }\forall \varphi \in C(\Omega _{T})\cap L^{\infty
}(\Omega _{T}).
\end{equation*}%
\medskip We recall that if $u$ is a measurable function defined and finite
a.e. in $\Omega _{T}$, such that $T_{k}(u)\in L^{p}(0,T,W_{0}^{1,p}(\Omega
)) $ for any $k>0$, there exists a measurable function $v:\Omega
_{T}\rightarrow \mathbb{R}^{N}$ such that $\nabla T_{k}(u)=\chi _{|u|\leq
k}v $ a.e. in $\Omega _{T}$ and for all $k>0$. We define the gradient $%
\nabla u$ of $u$ by $v=\nabla u$.\newline
}

\begin{definition}
{\small \label{defin}\label{100320142} Let $p>\frac{2N+1}{N+1}$ and $\mu
=\mu _{0}+\mu _{s}\in \mathcal{M}_{b}(\Omega _{T})$, $\sigma \in
L^{1}(\Omega )$ and $B\in C(\mathbb{R})$. A measurable function $u$ is a
renormalized solution of 
\begin{equation}
\left\{ 
\begin{array}{l}
{u_{t}}-\Delta _{p}u+B(u)=\mu ~\text{in }\Omega _{T}, \\ 
u=0~~~~~~~\text{on}~~\partial \Omega \times (0,T), \\ 
u(0)=\sigma ~~~\text{in}~~\Omega , \\ 
\end{array}%
\right.  \label{050420141}
\end{equation}%
if there exists a decomposition $(f,g,h)$ of $\mu _{0}$ such that 
\begin{align}
& v=u-h\in L^{s}((0,T);W_{0}^{1,s}(\Omega ))\cap L^{\infty
}((0,T);L^{1}(\Omega )),~\forall s\in \left[ 1,p-\frac{N}{N+1}\right) , 
\notag \\
& ~~~~T_{k}(v)\in L^{p}((0,T);W_{0}^{1,p}(\Omega ))~\forall k>0,B(u)\in
L^{1}(\Omega _{T}),  \label{defv}
\end{align}%
and:\medskip }

{\small (i) for any $S\in W^{2,\infty }(\mathbb{R})$ such that $S^{\prime }$
has compact support on $\mathbb{R}$, and $S(0)=0$, 
\begin{align}
& -\int_{\Omega }S(\sigma )\varphi (0)dx-\int_{\Omega _{T}}{{\varphi _{t}}%
S(v)}dxdt+\int_{\Omega _{T}}{S^{\prime }(v)|\nabla u|^{p-2}\nabla u\nabla
\varphi }dxdt  \notag \\
& ~~~~+\int_{\Omega _{T}}{S^{\prime \prime }(v)\varphi |\nabla
u|^{p-2}\nabla u\nabla v}dxdt+\int_{\Omega _{T}}{S^{\prime }(v)\varphi
B(u)dxdt= }\int_{\Omega _{T}}(f{S^{\prime }(v)\varphi }+g.\nabla ({S^{\prime
}(v)\varphi })dxdt  \label{100620143}
\end{align}%
for any $\varphi \in L^{p}((0,T);W_{0}^{1,p}(\Omega ))\cap L^{\infty
}(\Omega _{T})$ such that $\varphi _{t}\in L^{p^{\prime
}}((0,T);W^{-1,p^{\prime }}(\Omega ))+L^{1}(\Omega _{T})$ and $\varphi
(.,T)=0$;\medskip }

{\small (ii) for any $\phi \in C(\overline{\Omega _{T}}),$ 
\begin{equation}
\lim_{m\rightarrow \infty }\frac{1}{m}\int\limits_{\left\{ m\leq
v<2m\right\} }{\phi |\nabla u|^{p-2}\nabla u\nabla v}dxdt=\int_{\Omega
_{T}}\phi d\mu _{s}^{+}~~\text{ and }  \label{renor2}
\end{equation}%
\begin{equation}
\lim_{m\rightarrow \infty }\frac{1}{m}\int\limits_{\left\{ -m\geq
v>-2m\right\} }{\phi |\nabla u|^{p-2}\nabla u\nabla v}dxdt=\int_{\Omega
_{T}}\phi d\mu _{s}^{-}.  \label{renor3}
\end{equation}
}
\end{definition}

{\small We first mention a convergence result of \cite{BiH}. }

\begin{proposition}
{\small \label{100620148} Let $\{\mu _{n}\}$ be bounded in $\mathcal{M}%
_{b}(\Omega _{T})$ and $\left\{ \sigma _{n}\right\} $ be bounded in $%
L^{1}(\Omega ),$ and $B\equiv 0$. Let $u_{n}$ be a renormalized solution of %
\eqref{050420141} with data $\mu _{n}=\mu _{n,0}+\mu _{n,s}$ relative to a
decomposition $(f_{n},g_{n},h_{n})$ of $\mu _{n,0}$ and initial data $\sigma
_{n}$. If $\{f_{n}\}$ is bounded in $L^{1}(\Omega _{T})$, $\{g_{n}\}$
bounded in $(L^{p^{\prime }}(\Omega _{T}))^{N}$ and $\{h_{n}\}$ convergent
in $L^{p}(0,T,W_{0}^{1,p}(\Omega ))$, then, up to a subsequence, $\{u_{n}\}$
converges to a function $u$ in $L^{1}(\Omega _{T})$. Moreover, if $\{\mu
_{n}\}$ is bounded in $L^{1}(\Omega _{T})$ then $\{u_{n}\}$ is convergent in 
$L^{s}(0,T,W_{0}^{1,s}(\Omega ))$ for any $s\in \left[ 1,p-\frac{N}{N+1}%
\right) $. }
\end{proposition}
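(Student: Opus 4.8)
The plan is to prove the proposition by combining uniform a priori estimates with a compactness argument of the type used in the renormalized and entropy solution theory for parabolic equations with measure data. Throughout set $v_n=u_n-h_n$, so that by \eqref{defv} we have $v_n\in L^\infty((0,T);L^1(\Omega))$ and $T_k(v_n)\in L^p((0,T);W_0^{1,p}(\Omega))$ for every $k>0$, with the equation for $u_n$ driven by $f_n,g_n$ and the convergent part $h_n$.

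First I would derive the basic energy and Marcinkiewicz estimates, uniformly in $n$. Choosing in the renormalized formulation \eqref{100620143} a function $S=S_k$ whose derivative approximates $T_k'$ and a test function built from $T_k(v_n)$, and using the bounds on $\sigma_n$ in $L^1(\Omega)$, on $\{f_n\}$ in $L^1(\Omega_T)$, on $\{g_n\}$ in $(L^{p'}(\Omega_T))^N$ and on $\{h_n\}$ in $L^p((0,T);W_0^{1,p}(\Omega))$, one obtains
\begin{equation*}
\sup_n\|v_n\|_{L^\infty((0,T);L^1(\Omega))}<\infty,\qquad \sup_n\int_{\Omega_T}|\nabla T_k(v_n)|^p\,dx\,dt\le C(k+1).
\end{equation*}
A parabolic Gagliardo--Nirenberg interpolation between $L^\infty(L^1)$ and $L^p(W_0^{1,p})$ applied to the truncations converts these into Marcinkiewicz bounds: $\{u_n\}$ is bounded in $L^{p-1+p/N,\infty}(\Omega_T)$ and $\{\nabla u_n\}$ in $L^{p-N/(N+1),\infty}(\Omega_T)$. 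In particular $\{u_n\}$ is bounded in $L^s((0,T);W_0^{1,s}(\Omega))$ for every $s<p-N/(N+1)$, and the distribution function estimate $|\{|u_n|>\lambda\}|\le C\lambda^{-(p-1+p/N)}$ yields the equi-integrability needed later.

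Next I would obtain a.e.\ convergence. Fix $l>0$ and consider $S_l(v_n)$; from the equation one checks that $\{(S_l(v_n))_t\}$ is bounded in $L^1(\Omega_T)+L^{p'}((0,T);W^{-1,p'}(\Omega))$, while $\{S_l(v_n)\}$ is bounded in $L^p((0,T);W_0^{1,p}(\Omega))$. An Aubin--Simon compactness lemma then makes $\{S_l(v_n)\}$ relatively compact in $L^1(\Omega_T)$ for each fixed $l$. Using the uniform tail estimate on $\{|u_n|>l\}$ to control the discrepancy between $u_n$ and $S_l(v_n)+h_n$ as $l\to\infty$, a diagonal extraction produces a subsequence along which $u_n\to u$ a.e.\ in $\Omega_T$. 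Since the Marcinkiewicz bound gives equi-integrability of $\{u_n\}$, Vitali's theorem upgrades this to $u_n\to u$ in $L^1(\Omega_T)$, which is the first assertion.

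For the second assertion, when $\{\mu_n\}$ is bounded in $L^1(\Omega_T)$ the singular parts $\mu_{n,s}$ vanish and all data lie in $L^1$, so \eqref{renor2}--\eqref{renor3} are trivial and it only remains to prove $\nabla u_n\to\nabla u$ a.e. This is the step I expect to be the main obstacle, and I would carry it out by the monotonicity argument of Boccardo--Murat--Porretta type: testing the difference of the equations for $u_n$ and a regularization of the limit with the truncations $T_k(u_n-u)$, and exploiting the strict monotonicity of $\xi\mapsto|\xi|^{p-2}\xi$ to force
\begin{equation*}
\int_{\Omega_T}\bigl(|\nabla u_n|^{p-2}\nabla u_n-|\nabla u|^{p-2}\nabla u\bigr)\cdot\nabla T_k(u_n-u)\,dx\,dt\longrightarrow 0,
\end{equation*}
whence $\nabla u_n\to\nabla u$ a.e. The delicate point is that $u_n-u$ is not admissible as a test function in time, so one must insert a Landes-type time mollification and control the ensuing time-derivative terms before passing to the limit. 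Once a.e.\ gradient convergence is secured, the uniform Marcinkiewicz bound on $\{\nabla u_n\}$ provides equi-integrability of $\{|\nabla u_n|^s\}$ for $s<p-N/(N+1)$, and Vitali's theorem again upgrades this to convergence in $L^s((0,T);W_0^{1,s}(\Omega))$, completing the proof.
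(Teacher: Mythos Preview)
The paper does not actually contain a proof of this proposition: it is introduced with the sentence ``We first mention a convergence result of \cite{BiH}'' and then stated without argument, as a result imported from the companion paper. There is therefore no proof in the present paper to compare your proposal against.

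That said, your outline follows the expected strategy for this type of compactness statement (uniform truncation estimates, Marcinkiewicz bounds, Aubin--Simon compactness on $S_l(v_n)$, Vitali to upgrade a.e.\ convergence to $L^1$, then a monotonicity/Landes-mollification step for the gradients when the data are $L^1$), and is consistent with the techniques developed in the reference \cite{BiH} and in \cite{BDGO97,Pe08}. One point to be careful about in your second step: you write that you will test the \emph{difference} of the equations for $u_n$ and ``a regularization of the limit'' with $T_k(u_n-u)$, but at this stage $u$ is only an a.e.\ limit, not yet known to satisfy any equation, so you cannot subtract an equation for $u$. The usual route is rather to test the equation for $u_n$ alone with (a time-regularized version of) $T_k(v_n-T_j(v))$ and to show directly that $\nabla T_k(v_n)\to\nabla T_k(v)$ strongly in $L^p$, from which a.e.\ gradient convergence follows; this is the device used in \cite{BiH} and in the stability Theorem~\ref{100620145} just below.
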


{\small Next we recall the fundamental stability result of \cite{BiH}. }

\begin{theorem}
{\small \label{100620145} Suppose that $p>\frac{2N+1}{N+1}$ and $B\equiv 0$.
Let $\sigma \in L^{1}(\Omega )$ and 
\begin{equation*}
\mu =f-\text{div} g+h_{t}+\mu _{s}^{+}-\mu _{s}^{-}\in \mathcal{M}_{b}({%
\Omega _{T}}),
\end{equation*}%
with $f\in L^{1}(\Omega _{T}),g\in (L^{p^{\prime }}(\Omega _{T}))^{N}$, $%
h\in L^{p}((0,T);W_{0}^{1,p}(\Omega ))$ and $\mu _{s}^{+},\mu _{s}^{-}\in 
\mathcal{M}_{s}^{+}(\Omega _{T})$. Let $\sigma _{n}\in L^{1}(\Omega )$ and 
\begin{equation*}
\mu _{n}=f_{n}-\text{div}g_{n}+(h_{n})_{t}+\rho _{n}-\eta _{n}\in \mathcal{M}%
_{b}(\Omega _{T}),
\end{equation*}%
with \ $f_{n}\in L^{1}(\Omega _{T}),g_{n}\in (L^{p^{\prime }}(\Omega
_{T}))^{N},h_{n}\in L^{p}((0,T);W_{0}^{1,p}(\Omega )),$ and $\rho _{n},\eta
_{n}\in \mathcal{M}_{b}^{+}(\Omega _{T}),$ such that 
\begin{equation*}
\rho _{n}=\rho _{n}^{1}-\text{div} ~\rho _{n}^{2}+\rho _{n,s},\qquad \eta
_{n}=\eta _{n}^{1}-\text{div} ~\eta _{n}^{2}+\eta _{n,s},
\end{equation*}%
with $\rho _{n}^{1},\eta _{n}^{1}\in L^{1}(\Omega _{T}),\rho _{n}^{2},\eta
_{n}^{2}\in (L^{p^{\prime }}(\Omega _{T}))^{N}$ and $\rho _{n,s},\eta
_{n,s}\in \mathcal{M}_{s}^{+}(\Omega _{T}).\medskip $ }

{\small Assume that $\{\mu _{n}\}$ is bounded in $\mathcal{M}_{b}(\Omega_T)$%
, $\{\sigma _{n}\},\{f_{n}\},\{g_{n}\},\{h_{n}\}$ converge to $\sigma ,f,g,h$
in $L^{1}(\Omega )$, weakly in $L^{1}(\Omega _{T})$, in $(L^{p^{\prime
}}(\Omega _{T}))^{N}$,in $L^{p}(0,T,W_{0}^{1,p}(\Omega ))$ respectively and $%
\{\rho _{n}\},\{\eta _{n}\}$ converge to $\mu _{s}^{+},\mu _{s}^{-}$ in the
narrow topology of measures; and $\left\{ \rho _{n}^{1}\right\}, \left\{
\eta _{n}^{1}\right\} $ are bounded in $L^{1}(\Omega _{T})$, and $\left\{
\rho _{n}^{2}\right\} ,\left\{ \eta _{n}^{2}\right\} $ bounded in $%
(L^{p^{\prime }}(\Omega _{T}))^{N}$. \newline
\medskip Let $\left\{ u_{n}\right\} $ be a sequence of renormalized
solutions of 
\begin{equation}
\left\{ 
\begin{array}{l}
{(u_{n})_{t}}-\Delta _{p}u_{n}=\mu _{n}~\text{in }\Omega _{T}, \\ 
u_{n}=0~~~~~~\text{ on }\partial \Omega \times (0,T), \\ 
u_{n}(0)=\sigma _{n}~\text{ in }\Omega , \\ 
\end{array}%
\right.  \label{100620146}
\end{equation}%
relative to the decomposition $(f_{n}+\rho _{n}^{1}-\eta _{n}^{1},g_{n}+\rho
_{n}^{2}-\eta _{n}^{2},h_{n})$ of $\mu _{n,0}.$ Let $v_{n}=u_{n}-h_{n}.$
\medskip }

{\small Then up to a subsequence, $\left\{ u_{n}\right\} $ converges $a.e.$
in $\Omega _{T}$ to a renormalized solution $u$ of \eqref{050420141}, and $%
\left\{ v_{n}\right\} $ converges $a.e.$ in $\Omega _{T}$ to $v=u-h.$
Moreover, $\left\{ \nabla v_{n}\right\} $ converge to $\nabla v$ a.e in $%
\Omega _{T},$ and $\left\{ T_{k}(v_{n})\right\} $ converges to $T_{k}(v) $
strongly in $L^{p}(0,T,W_{0}^{1,p}(\Omega ))$ for any $k>0 $. }
\end{theorem}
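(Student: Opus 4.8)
The plan is to follow the by-now classical scheme for stability of renormalized solutions to parabolic problems with measure data, adapting the elliptic arguments of Dal Maso--Murat--Orsina--Prignet and their parabolic counterparts due to Petitta and Petitta--Ponce--Porretta to the present setting, in which the diffuse and singular parts of the limiting measure are approximated \emph{separately} and only in the narrow topology. First I would establish the a priori bounds that are uniform in $n$. Testing the renormalized formulation of \eqref{100620146} with truncations and using the boundedness of $\{\mu_n\}$ in $\mathcal{M}_b(\Omega_T)$, together with the boundedness of $\{f_n\},\{g_n\},\{h_n\},\{\sigma_n\}$, yields, exactly as in the existence construction, a bound for $\{T_k(v_n)\}$ in $L^p(0,T;W_0^{1,p}(\Omega))$ (growing linearly in $k$), a bound for $\{v_n\}$ in $L^\infty(0,T;L^1(\Omega))\cap L^s(0,T;W_0^{1,s}(\Omega))$ for every $s<p-\frac{N}{N+1}$, and the attendant decay estimates for the level sets of $|v_n|$ and $|\nabla v_n|$. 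By Proposition \ref{100620148} these bounds already give, up to a subsequence, $u_n\to u$ in $L^1(\Omega_T)$, hence $v_n=u_n-h_n\to v=u-h$ a.e.\ in $\Omega_T$ after passing to the limit in $h_n$.

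The heart of the proof is the a.e.\ convergence $\nabla v_n\to\nabla v$, obtained from the monotonicity of the $p$-Laplacian. The difficulty specific to the parabolic framework is that the natural test function $T_k(v_n-v)$ is not admissible, since $(v_n)_t$ lives only in $L^{p'}(0,T;W^{-1,p'}(\Omega))+L^1(\Omega_T)$; one must therefore first regularize $v$ in time by a Landes--Mustonen (or Steklov) average $\langle v\rangle_\nu$ and take the limits in the order $n\to\infty$, then $\nu\to\infty$. Testing the renormalized formulations \eqref{100620143} for $u_n$ and for $u$ with admissible functions built from $S'(v_n)\,T_k(v_n-\langle v\rangle_\nu)$, and using the renormalizing cutoffs $S$ to control the terms supported on $\{|v_n|\text{ large}\}$, one isolates the quantity $\int(|\nabla u_n|^{p-2}\nabla u_n-|\nabla u|^{p-2}\nabla u)\cdot\nabla(v_n-v)$ on the truncation set and shows it tends to $0$; the pointwise monotonicity inequality for the $p$-Laplacian then forces $\nabla v_n\to\nabla v$ a.e. Here the singular measures enter only through the renormalization remainder terms, and the hypotheses that $\{\rho_n\},\{\eta_n\}$ converge to $\mu_s^{\pm}$ narrowly, with $\{\rho_n^1\},\{\eta_n^1\}$ bounded in $L^1(\Omega_T)$ and $\{\rho_n^2\},\{\eta_n^2\}$ bounded in $(L^{p'}(\Omega_T))^N$, are exactly what guarantees that the singular contributions concentrate on the region where $v_n$ is large and hence do not pollute the gradient convergence on the truncation sets.

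Once $\nabla v_n\to\nabla v$ a.e.\ is known, I would upgrade it to strong convergence of $\{T_k(v_n)\}$ in $L^p(0,T;W_0^{1,p}(\Omega))$ by combining the a.e.\ convergence with a Vitali/equi-integrability argument for $|\nabla T_k(v_n)|^p$ based on the energy identity, and then pass to the limit in \eqref{100620143}: the diffuse part is handled by the strong gradient convergence on truncation sets together with the convergence of $(f_n,g_n,h_n)$, while the two identities \eqref{renor2}--\eqref{renor3} reconstructing $\mu_s^+$ and $\mu_s^-$ are recovered by passing to the limit in the corresponding energy-concentration identities for $u_n$, again invoking the narrow convergence of $\{\rho_n\}$ and $\{\eta_n\}$. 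The main obstacle throughout is precisely this separation and limit-reconstruction of the singular part: one must show that the energy carried on $\{m\le v_n<2m\}$ (resp.\ $\{-m\ge v_n>-2m\}$) converges, uniformly in the splitting parameter $m$, to the prescribed singular masses, with no singular mass lost or created in the limit. This is where narrow convergence of the approximating singular measures (rather than mere weak-$\ast$ convergence) is indispensable, and it is the step I expect to be the most delicate to make rigorous.
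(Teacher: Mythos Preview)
The paper does not prove this theorem: it is explicitly introduced with ``Next we recall the fundamental stability result of \cite{BiH}'' and is quoted without proof from the companion paper \cite{BiH}. There is therefore no proof in the present paper against which to compare your proposal.

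That said, your outline is a faithful description of the standard strategy for such parabolic stability results (uniform truncation estimates, compactness via an Aubin--Simon type argument as encapsulated in Proposition~\ref{100620148}, a.e.\ gradient convergence through monotonicity combined with a Landes-type time-regularization of the limit, then upgrading to strong $L^p$ convergence of the truncations and passing to the limit in the renormalized formulation, with the reconstruction of \eqref{renor2}--\eqref{renor3} from the narrow convergence of $\rho_n,\eta_n$). This is indeed the architecture one expects in \cite{BiH}, and you have correctly identified the delicate point: controlling the energy on the level sets $\{m\le |v_n|<2m\}$ uniformly so that no singular mass is lost or created in the limit, which is exactly why narrow (and not merely weak-$\ast$) convergence of the approximating singular parts is assumed. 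Your sketch is a plan rather than a proof---each of the limit exchanges (in $n$, in the Landes parameter, in the renormalization cutoff, and in $m$) requires a separate quantitative estimate---but as a roadmap it is sound.
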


{\small In order to apply this Theorem, we need some the following
properties concerning approximate measures of $\mu \in \mathcal{M}%
_{b}^{+}(\Omega _{T})$, see also \cite{BiH}. }

\begin{proposition}
{\small \label{100620141}Let $\mu =\mu _{0}+\mu _{s}\in \mathcal{M}%
_{b}^{+}(\Omega _{T})$, $\mu _{0}\in \mathcal{M}_{0}(\Omega _{T})\cap 
\mathcal{M}_{b}^{+}(\Omega _{T})$ and $\mu _{s}\in \mathcal{M}_{s}(\Omega
_{T}).$ Let $\left\{ \varphi _{1,n}\right\} ,\left\{ \varphi _{2,n}\right\} $
be sequences of mollifiers in $\mathbb{R}^{N},\mathbb{R}$ respectively.
There exists a sequence of measures $\mu _{n,0}=(f_{n},g_{n},h_{n}),$ such
that $f_{n},g_{n},h_{n},\mu _{n,s}\in C_{c}^{\infty }(\Omega _{T})$ and
strongly converge to $f,g,h$ in $L^{1}(\Omega _{T}),(L^{p^{\prime }}(\Omega
_{T}))^{N} $ and $L^{p}((0,T);W_{0}^{1,p}(\Omega ))$ respectively, $\mu
_{n,s}$ converges to $\mu _{s}\in \mathcal{M}_{s}^{+}(\Omega _{T}),$ and $%
\mu _{n}=\mu _{n,0}+\mu _{n,s}$ converges to $\mu $, in the narrow topology,
and satisfying $0\leq \mu _{n}\leq (\varphi _{1,n}\varphi _{2,n})\ast \mu $,
and 
\begin{equation*}
||f_{n}||_{L^{1}(\Omega _{T})}+\left\Vert g_{n}\right\Vert _{(L^{p^{\prime
}}(\Omega _{T}))^{N}}+||h_{n}||_{L^{p}(0,T,W_{0}^{1,p}(\Omega ))}+\mu
_{n,s}(\Omega _{T})\leq 2\mu (\Omega _{T})~~\text{for any }~ n\in \mathbb{N}.
\end{equation*}
}
\end{proposition}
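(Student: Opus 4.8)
The plan is to approximate the diffuse part $\mu_0$ and the singular part $\mu_s$ separately, using the product mollifier $\rho_n:=\varphi_{1,n}\varphi_{2,n}$ as the basic smoothing device and a single space-time cut-off to force compact support in $\Omega_T$. First I fix a decomposition $\mu_0=f-\text{div}\,g+h_t$ with $f\in L^1(\Omega_T)$, $g\in(L^{p'}(\Omega_T))^N$ and $h\in L^p(0,T;W_0^{1,p}(\Omega))$, whose existence follows from \cite{DrPoPr} and which, as in \cite{BiH}, can be arranged so that
\[
\|f\|_{L^1(\Omega_T)}+\|g\|_{(L^{p'}(\Omega_T))^N}+\|h\|_{L^p(0,T;W_0^{1,p}(\Omega))}\le 2\mu_0(\Omega_T);
\]
this quantitative bound is ultimately responsible for the constant $2$ in the conclusion.

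Next I introduce cut-offs $\theta_n\in C_c^\infty(\Omega_T)$ with $0\le\theta_n\le1$ and $\theta_n\uparrow1$, with the width of the transition layer tied to the mollification scale, and set
\[
\mu_{n,s}=\theta_n\,(\rho_n*\mu_s),\qquad \mu_{n,0}=\theta_n\,(\rho_n*\mu_0),
\]
so that $\mu_n=\mu_{n,0}+\mu_{n,s}=\theta_n\,(\rho_n*\mu)$. Since $\mu\ge0$ and $0\le\theta_n\le1$, this yields at once the required domination $0\le\mu_n\le\rho_n*\mu=(\varphi_{1,n}\varphi_{2,n})*\mu$, and every object is smooth with compact support in $\Omega_T$. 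A decomposition $(f_n,g_n,h_n)$ of $\mu_{n,0}$ is then read off from
\[
\theta_n(\rho_n*\mu_0)=\theta_n(\rho_n*f)-\text{div}\big(\theta_n(\rho_n*g)\big)+\big(\theta_n(\rho_n*h)\big)_t+(\rho_n*g)\cdot\nabla\theta_n-(\rho_n*h)(\theta_n)_t,
\]
namely $g_n=\theta_n(\rho_n*g)$, $h_n=\theta_n(\rho_n*h)$ and $f_n=\theta_n(\rho_n*f)+(\rho_n*g)\cdot\nabla\theta_n-(\rho_n*h)(\theta_n)_t$, all in $C_c^\infty(\Omega_T)$.

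It then remains to check the convergences and the uniform estimate. Standard mollification theory gives $\theta_n(\rho_n*f)\to f$ in $L^1(\Omega_T)$ and $g_n\to g$ in $(L^{p'}(\Omega_T))^N$, and, using that convolution commutes with the spatial gradient together with the zero lateral trace of $h(\cdot,t)\in W_0^{1,p}(\Omega)$, $h_n\to h$ in $L^p(0,T;W_0^{1,p}(\Omega))$. Since $\mu\ge0$, both $\mu_n\to\mu$ and $\mu_{n,s}\to\mu_s$ in the narrow topology, because $\rho_n*\mu\to\mu$ and $\rho_n*\mu_s\to\mu_s$ weakly-$\ast$ while the total masses converge. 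For the uniform bound, the contraction of mollification gives $\|g_n\|_{(L^{p'})^N}\le\|g\|_{(L^{p'})^N}$ and $\mu_{n,s}(\Omega_T)\le\mu_s(\Omega_T)$, whereas $\|f_n\|_{L^1}$ and $\|h_n\|_{L^p(W_0^{1,p})}$ are controlled by $\|f\|_{L^1}$ and $\|h\|_{L^p(W_0^{1,p})}$ up to the commutator terms $(\rho_n*g)\cdot\nabla\theta_n$, $(\rho_n*h)(\theta_n)_t$ and $(\rho_n*h)\nabla\theta_n$; combined with the quantitative decomposition above and $\mu_s(\Omega_T)\le\mu(\Omega_T)$, this produces the bound $\le2\mu(\Omega_T)$.

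The main obstacle is exactly the control of these commutator terms: they must tend to $0$ in the relevant norms, which is delicate near the lateral boundary $\partial\Omega\times(0,T)$, where it is ensured by the $W_0^{1,p}$-structure of $h$ (so that $g$ and $h$ carry no limiting mass there), and near the time endpoints $t=0,T$. One therefore chooses $\theta_n$ together with the mollification parameter by a diagonal argument, so that $\nabla\theta_n$ and $(\theta_n)_t$ live on a shrinking shell whose $g$- and $h$-content vanishes; the slack in the factor $2$ then absorbs these errors uniformly in $n$. This single device simultaneously secures compact support in $\Omega_T$, the narrow convergence, and the uniform estimate.
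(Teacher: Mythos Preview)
The paper does not supply its own proof of this proposition; it states the result and refers to \cite{BiH}. Your construction---mollify by $\rho_n=\varphi_{1,n}\varphi_{2,n}$, multiply by a cut-off $\theta_n$, and read off $(f_n,g_n,h_n)$ via the Leibniz rule---is exactly the natural one, and the convergences you claim (in $L^1$, $(L^{p'})^N$, $L^p(0,T;W_0^{1,p})$, and narrow topology) are correct for the reasons you indicate. The domination $0\le\mu_n\le(\varphi_{1,n}\varphi_{2,n})\ast\mu$ is also immediate from $0\le\theta_n\le1$.

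The one genuine gap is your treatment of the uniform bound with constant exactly $2$. You begin from a decomposition with $\|f\|+\|g\|+\|h\|\le 2\mu_0(\Omega_T)$, and then your $f_n$ and $\nabla h_n$ each pick up commutator contributions $(\rho_n\ast g)\cdot\nabla\theta_n$, $(\rho_n\ast h)(\theta_n)_t$, $(\rho_n\ast h)\nabla\theta_n$. You assert that ``the slack in the factor $2$ then absorbs these errors uniformly in $n$'', but there is no slack: the starting bound may already saturate $2\mu_0(\Omega_T)$, and if $\mu_s=0$ then $2\mu_0(\Omega_T)=2\mu(\Omega_T)$, so any positive commutator contribution pushes you over. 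The commutator terms do tend to $0$, so for large $n$ the sum is close to $\|f\|+\|g\|+\|h\|+\mu_s(\Omega_T)\le 2\mu_0(\Omega_T)+\mu_s(\Omega_T)$, but this only gives the bound asymptotically, not ``for any $n\in\mathbb{N}$''. To close this you must either (i) start from a decomposition satisfying a \emph{strict} inequality $\|f\|+\|g\|+\|h\|<2\mu_0(\Omega_T)$ (and justify that such a choice is possible), or (ii) tie the cut-off $\theta_n$ to the mollification scale so precisely that the commutator norms are bounded by an explicit constant times the measure of a shrinking boundary shell, and then pass to a tail of the sequence and relabel---but relabelling conflicts with the hypothesis that the mollifiers $\varphi_{1,n},\varphi_{2,n}$ are \emph{given}. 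Either route is workable, but neither is the one-line appeal to ``slack'' that you have written; this step needs to be made honest.
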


\begin{proposition}
{\small \label{100620144}Let $\mu =\mu _{0}+\mu _{s},\;\mu _{n}=\mu
_{n,0}+\mu _{n,s}\in \mathcal{M}_{b}^{+}(\Omega _{T})$ with $\mu _{0},\mu
_{n,0}\in \mathcal{M}_{0}(\Omega _{T})\cap \mathcal{M}_{b}^{+}(\Omega _{T})$
and $\mu _{n,s},\mu _{s}\in \mathcal{M}_{s}^{+}(\Omega _{T})$ such that $%
\left\{ \mu _{n}\right\} $ is nondecreasing and converges to $\mu $ in $%
\mathcal{M}_{b}(\Omega_T).$ Then, $\left\{ \mu _{n,s}\right\} $ is
nondecreasing and converging to $\mu _{s}$ in $\mathcal{M}_{b}(\Omega _{T});$
and there exist decompositions $(f,g,h)$ of $\mu _{0}$, $(f_{n},g_{n},h_{n})$
of $\mu _{n,0}$ such that $\left\{ f_{n}\right\} ,\left\{ g_{n}\right\}
,\left\{ h_{n}\right\} $ strongly converge to $f,g,h$ in $L^{1}(\Omega
_{T}),(L^{p^{\prime }}(\Omega _{T}))^{N}$ and $L^{p}((0,T);W_{0}^{1,p}(%
\Omega ))$ respectively, satisfying 
\begin{equation*}
||f_{n}||_{L^{1}(\Omega _{T})}+\left\Vert g_{n}\right\Vert _{(L^{p^{\prime
}}(\Omega _{T}))^{N}}+||h_{n}||_{L^{p}((0,T);W_{0}^{1,p}(\Omega ))}+\mu
_{n,s}(\Omega _{T})\leq 2\mu (\Omega _{T})~~\text{ for any } ~n\in\mathbb{N}.
\end{equation*}
}
\end{proposition}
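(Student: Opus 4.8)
The plan is to reduce the whole statement to the single‑measure construction already available in Proposition \ref{100620141}, applied to a telescoping sequence of positive diffuse differences, and then to sum. The two facts I would lean on are that the capacitary decomposition $\nu=\nu_0+\nu_s$ is \emph{unique} and hence additive, and that for a \emph{positive} measure $\nu\in\mathcal{M}_b^+(\Omega_T)$ both parts $\nu_0,\nu_s$ are again positive: indeed $\nu_s$ is the restriction of $\nu$ to a Borel set $E$ of zero $C_p$-capacity on which it is concentrated, and $\nu_0$ is the restriction to the complement, so both are positive restrictions of a positive measure.

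First I would settle the assertion on the singular parts. Since $\{\mu_n\}$ is nondecreasing with limit $\mu$ in $\mathcal{M}_b(\Omega_T)$, we have $\mu_n\leq\mu$, so $\mu-\mu_n\in\mathcal{M}_b^+(\Omega_T)$ with $(\mu-\mu_n)(\Omega_T)\to0$. Writing $\mu=\mu_n+(\mu-\mu_n)$ and using uniqueness and additivity of the decomposition gives $\mu_s-\mu_{n,s}=(\mu-\mu_n)_s\geq0$ and $\mu_0-\mu_{n,0}=(\mu-\mu_n)_0\geq0$, so both $\{\mu_{n,s}\}$ and $\{\mu_{n,0}\}$ are nondecreasing. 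Moreover $(\mu_s-\mu_{n,s})(\Omega_T)=(\mu-\mu_n)_s(\Omega_T)\leq(\mu-\mu_n)(\Omega_T)\to0$, and likewise for $\mu_0-\mu_{n,0}$; hence $\mu_{n,s}\to\mu_s$ and $\mu_{n,0}\to\mu_0$ strongly in $\mathcal{M}_b(\Omega_T)$.

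For the decompositions I would telescope. Set $\lambda_1=\mu_{1,0}$ and $\lambda_k=\mu_{k,0}-\mu_{k-1,0}$ for $k\geq2$; by the previous step each $\lambda_k\in\mathcal{M}_0(\Omega_T)\cap\mathcal{M}_b^+(\Omega_T)$ is positive and diffuse, $\sum_{k=1}^{n}\lambda_k=\mu_{n,0}$, and $\sum_{k=1}^{\infty}\lambda_k(\Omega_T)=\mu_0(\Omega_T)<\infty$. Applying Proposition \ref{100620141} to each purely diffuse $\lambda_k$ (whose singular part is $0$) and passing to the limit in the mollified approximants produces a decomposition $(f^{(k)},g^{(k)},h^{(k)})$ of $\lambda_k$ whose norms, transferred from the approximants through the strong convergence provided there, satisfy $\|f^{(k)}\|_{L^1(\Omega_T)}+\|g^{(k)}\|_{(L^{p'}(\Omega_T))^N}+\|h^{(k)}\|_{L^p((0,T);W_0^{1,p}(\Omega))}\leq 2\lambda_k(\Omega_T)$. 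Then I set $f_n=\sum_{k=1}^{n}f^{(k)}$, $g_n=\sum_{k=1}^{n}g^{(k)}$, $h_n=\sum_{k=1}^{n}h^{(k)}$, a finite‑sum decomposition of $\mu_{n,0}$, and $f=\sum_{k\geq1}f^{(k)}$, $g=\sum_{k\geq1}g^{(k)}$, $h=\sum_{k\geq1}h^{(k)}$. Absolute summability $\sum_k 2\lambda_k(\Omega_T)=2\mu_0(\Omega_T)<\infty$ forces these series to converge in $L^1(\Omega_T)$, $(L^{p'}(\Omega_T))^N$ and $L^p((0,T);W_0^{1,p}(\Omega))$, so $f_n\to f$, $g_n\to g$, $h_n\to h$ strongly; passing to the distributional limit in $\mu_{n,0}=f_n-\mathrm{div}\,g_n+(h_n)_t$ shows $(f,g,h)$ decomposes $\mu_0$. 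Finally $\|f_n\|+\|g_n\|+\|h_n\|\leq\sum_{k=1}^{n}2\lambda_k(\Omega_T)=2\mu_{n,0}(\Omega_T)$, whence $\|f_n\|+\|g_n\|+\|h_n\|+\mu_{n,s}(\Omega_T)\leq 2\mu_{n,0}(\Omega_T)+\mu_{n,s}(\Omega_T)\leq 2\mu_n(\Omega_T)\leq 2\mu(\Omega_T)$, which is the claimed bound.

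The routine ingredients are the uniqueness and additivity of the capacitary decomposition and the absolute summability of the series in the three Banach spaces. The step requiring the most care—and the one I expect to be the main obstacle—is extracting from Proposition \ref{100620141} an actual \emph{limit} decomposition of each $\lambda_k$ carrying the sharp constant $2\lambda_k(\Omega_T)$: the proposition is stated for the mollified approximants, so one must verify that the strong convergence it furnishes both keeps $(f^{(k)},g^{(k)},h^{(k)})$ a legitimate decomposition of $\lambda_k$ and passes the bound to the limit triple. Once that is secured, the telescoping and summation are immediate.
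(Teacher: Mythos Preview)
Your proposal is correct and follows essentially the same route as the paper. The paper does not include a self-contained proof of this proposition (it is deferred to \cite{BiH}), but the telescoping mechanism you describe---writing the nondecreasing sequence as partial sums of positive increments, applying the single-measure decomposition of Proposition \ref{100620141} to each increment, and summing---is exactly the device the paper deploys in the proof of Theorem \ref{100620149} (see the definition of $\tilde{\mu}_{i,j}=\mu_{i,j}-\mu_{i,j-1}$ and the ensuing norm bounds there). The only cosmetic difference is that the paper telescopes the full measures $\mu_n$ and then splits each increment into diffuse and singular parts, whereas you first split and then telescope the diffuse parts $\mu_{n,0}$; since the capacitary decomposition is additive on positive measures, the two orderings are equivalent. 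Your identification of the one delicate point---passing the constant $2\lambda_k(\Omega_T)$ from the mollified approximants to the limit triple via the strong convergence furnished by Proposition \ref{100620141}---is accurate and is handled exactly as you indicate.
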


\subsection{{\protect\small Proof of Theorem \protect\ref{100620149}}}

{\small Here the crucial point is a result of Liskevich, Skrypnik and Sobol 
\cite{LSS} for the $p$-Laplace evolution problem without absorption: }

\begin{theorem}
{\small \label{1106201414} Let $p>2,$ and $\mu \in \mathcal{M}_{b}(\Omega
_{T})$. If $u\in C([0,T];L_{loc}^{2}(\Omega ))\cap
L_{loc}^{p}(0,T,W_{loc}^{1,p}(\Omega ))$ is a distribution solution to
equation 
\begin{equation*}
{u_{t}}-{\Delta _{p}}u=\mu ~~\text{in }\Omega _{T},
\end{equation*}%
then there exists $C=C(N,p)$ such that, for every Lebesgue point $(x,t)\in
\Omega _{T}$ of $u$ and any $\rho >0$ such that $Q_{\rho ,\rho
^{p}}(x,t):=B_{\rho }(x)\times (t-\rho ^{p},t+\rho ^{p})\subset \Omega _{T}$
one has 
\begin{equation}
|u(x,t)|\leq C\left( 1+\left( \frac{1}{\rho ^{N+p}}\int_{Q_{\rho ,\rho
^{p}}(x,t)}|u|^{(\lambda +1)(p-1)}\right) ^{\frac{1}{1+\lambda (p-1)}}+%
\mathbf{P}_{p}^{\rho }[\mu ](x,t)\right) ,
\end{equation}%
where $\lambda =\min \{1/(p-1),1/N\}$ and 
\begin{equation*}
\mathbf{P}_{p}^{\rho }[\mu ](x,t)=\sum_{i=0}^{\infty }D_{p}(\rho _{i})(x,t),
\end{equation*}%
\begin{equation*}
D_{p}(\rho _{i})(x,t)=\inf_{\tau >0}\left\{ (p-2)\tau ^{-\frac{1}{p-2}}+%
\frac{1}{2(p-1)^{p-1}}\frac{|\mu |(Q_{\rho _{i},\tau \rho _{i}^{p}}(x,t))}{%
\rho _{i}^{N}}\right\} ,
\end{equation*}%
with $\rho _{i}=2^{-i}\rho $, $Q_{\rho ,\tau \rho ^{p}}(x,t)=B_{\rho
}(x)\times (t-\tau \rho ^{p},t+\tau \rho ^{p})$. }
\end{theorem}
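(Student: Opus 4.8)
Since $p>2$ the equation is degenerate, so the estimate cannot be read off from the standard parabolic cylinders alone; the natural framework is DiBenedetto's \emph{intrinsic geometry}, in which time is stretched by a factor $\tau$ calibrated to the local magnitude of $u$. This is exactly what the infimum over $\tau$ in $D_p(\rho_i)$ encodes: writing the intrinsic reference level as $\lambda_0\approx\tau^{-1/(p-2)}$, the first term $(p-2)\tau^{-1/(p-2)}$ of $D_p$ is (a constant multiple of) this height $\lambda_0$, while $\frac{|\mu|(Q_{\rho_i,\tau\rho_i^p}(x,t))}{\rho_i^N}$ is the mass of $\mu$ in the stretched cylinder measured in the intrinsic scale; minimizing over $\tau$ optimally balances a large reference level against a taller-in-time cylinder that captures more mass. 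The plan is to (i) prove Caccioppoli/energy estimates and a comparison with the $\mu$-free equation, (ii) deduce an intrinsic local sup-bound of De Giorgi type that produces the tail exponents, and (iii) run a dyadic iteration over $\rho_i=2^{-i}\rho$ whose recursion telescopes to $\sum_i D_p(\rho_i)=\mathbf{P}_p^\rho[\mu]$ plus the tail term on $Q_{\rho,\rho^p}(x,t)$. All constants below depend only on $N$ and $p$.

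\textbf{Energy and comparison estimates.} First I would fix an intrinsic cylinder $Q_{\rho_i,\tau\rho_i^p}(x,t)$ and write the Caccioppoli inequality for the truncations $(u-k)_\pm$ tested against a cutoff supported there; the datum $\mu$ enters linearly through $\int\varphi\,d\mu$, controlled after intrinsic normalization by $\frac{|\mu|(Q_{\rho_i,\tau\rho_i^p}(x,t))}{\rho_i^N}$. Next I would compare $u$ to the solution $w$ of the homogeneous problem $w_t-\Delta_p w=0$ on the same cylinder with the same lateral and initial data. The comparison estimate bounds a suitable norm of $u-w$ by this intrinsic mass density, and the homogeneous $w$ enjoys the local boundedness and decay estimates of DiBenedetto. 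Splitting $u=w+(u-w)$ thus yields a decay-of-excess inequality for $u$ whose error term is precisely the second summand of $D_p(\rho_i)$.

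\textbf{Intrinsic sup-bound and the tail exponents.} Calibrating the time-length of the cylinder by the reference level $\lambda_0$ (that is, taking $\tau\approx\lambda_0^{2-p}$), I would run DiBenedetto's intrinsic De Giorgi iteration to obtain, on the coupled cylinder,
\[
\sup_{Q_{\rho_i/2,\tau(\rho_i/2)^p}(x,t)}|u|
\le C\Big(\frac{1}{\rho_i^{N+p}}\int_{Q_{\rho_i,\tau\rho_i^p}(x,t)}|u|^{(\lambda+1)(p-1)}\,dx\,dt\Big)^{\frac{1}{1+\lambda(p-1)}}+C\,D_p(\rho_i)(x,t).
\]
The inner exponent $(\lambda+1)(p-1)$ and the outer exponent $\frac{1}{1+\lambda(p-1)}$ are dictated by the parabolic Sobolev embedding together with the intrinsic normalization by $\rho_i^{N+p}$; the threshold $\lambda=\min\{1/(p-1),1/N\}$ is chosen precisely so that this exponent stays in the range where the reverse-Hölder self-improvement closes and the resulting series converges. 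This single estimate simultaneously generates the tail term of the theorem (the integral on $Q_{\rho,\rho^p}(x,t)$ at the outer scale) and the building block $D_p(\rho_i)$ of the potential.

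\textbf{Iteration, summation, and the main obstacle.} Finally I would set up excess functionals $A_i$ on the dyadic intrinsic cylinders and, via a stopping-time/exit-time argument, prove a recursion
\[
A_{i+1}\le \varepsilon\,A_i + C\,D_p(\rho_i)(x,t) + C\,\varepsilon^{-\beta}A_{i-1},
\]
where $\beta=\beta(N,p)>0$ and at each step $\tau=\tau_i$ is chosen to realize, up to a universal factor, the infimum defining $D_p(\rho_i)$. Choosing $\varepsilon$ small and summing, the geometric contraction absorbs the excess, the measure contributions add to $\sum_{i\ge0}D_p(\rho_i)=\mathbf{P}_p^\rho[\mu](x,t)$, and $A_0$ is bounded by the tail term on $Q_{\rho,\rho^p}(x,t)$; letting $i\to\infty$ and using that $(x,t)$ is a Lebesgue point of $u$ gives $|u(x,t)|\le C(1+\text{tail}+\mathbf{P}_p^\rho[\mu](x,t))$, the additive $1$ accounting for the non-scale-invariance introduced by the intrinsic height. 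The hard part will be the bookkeeping of the intrinsic geometry inside the iteration: because the optimal stretch $\tau_i$ changes from scale to scale, the coupled cylinders are not naively nested, so one must verify their mutual inclusions up to universal constants, confirm that the coupling $\tau_i\approx\lambda_0^{2-p}$ stays compatible with the current level $A_i$ at every step, and ensure all cylinders remain inside $\Omega_T$; only once this geometric consistency is secured does the telescoping that produces $\mathbf{P}_p^\rho[\mu]$ become valid.
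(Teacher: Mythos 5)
You should first note that the paper contains no proof of this statement to compare against: Theorem \ref{1106201414} is quoted as an external result of Liskevich, Skrypnik and Sobol \cite{LSS}, so the relevant benchmark is their argument. That argument is a Kilpel\"ainen--Mal\'y-type iteration run directly on the equation with measure data: one constructs an increasing sequence of truncation levels $k_j$ coupled to intrinsic dyadic cylinders (the time-stretch at scale $j$ slaved to the current level, $\tau_j\approx k_j^{2-p}$), derives Caccioppoli estimates for $(u-k_j)_+$ at each scale, and obtains an alternative for the level increments $\delta_j=k_{j+1}-k_j$ of the form $\delta_{j+1}\le\tfrac12\delta_j+C\,D_p(\rho_j)$, which telescopes to $u(x,t)\lesssim k_0+\sum_j D_p(\rho_j)$; no comparison problems are solved. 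Your outline instead mixes this with the Kuusi--Mingione strategy of comparing with the homogeneous solution $w$ on each cylinder. That family of techniques can also produce potential estimates, but you never commit to one route, and for $p>2$ with measure data the comparison lemma bounding $u-w$ by the intrinsic mass density is itself a substantial result that you would have to prove or cite (cf. \cite{KuMi1}), not a routine step.

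The concrete gap is in your closing recursion. As written, $A_{i+1}\le\varepsilon A_i+C\,D_p(\rho_i)+C\varepsilon^{-\beta}A_{i-1}$ cannot be summed: the coefficient $C\varepsilon^{-\beta}$ on $A_{i-1}$ blows up precisely as $\varepsilon$ is taken small, so the two-step linear recursion has a characteristic root larger than $1$ for every small $\varepsilon$ (the roots of $x^{2}=\varepsilon x+C\varepsilon^{-\beta}$ exceed $1$), and the claimed ``geometric contraction absorbs the excess'' fails; no choice of $\varepsilon$ rescues it. A workable recursion must have total coefficient on the excess terms strictly below $1$, which is exactly what the level-increment alternative of \cite{LSS} achieves. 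Second, you explicitly flag but do not resolve the nesting of the intrinsic cylinders when the optimal stretch $\tau_i$ varies with the scale; this is not bookkeeping one can defer, since the telescoping that produces $\mathbf{P}_p^{\rho}[\mu]$ is invalid without the mutual inclusions. In the actual proof this is secured structurally: because the levels $k_j$ increase and $\tau_j\approx k_j^{2-p}$ with $p>2$, the stretches decrease monotonically along the iteration, which is what makes the coupled cylinders nested up to universal constants. Without supplying either a contractive recursion or this monotone coupling, your sketch does not constitute a proof.
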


{\small As a consequence, we deduce the following estimate: }

\begin{proposition}
{\small \label{100620147} If $u$ is a distribution solution of problem 
\begin{equation*}
\left\{ 
\begin{array}{l}
{u_{t}}-\Delta _{p}u=\mu ~\text{in }\Omega _{T}, \\ 
u=0~~~~~~\text{ on }\partial \Omega \times (0,T), \\ 
u(0)=0~\text{ in }\Omega , \\ 
\end{array}%
\right.
\end{equation*}%
with data $\mu \in C_{b}(\Omega _{T})$. Then there exists $C=C(N,p)$ such
that for $a.e.$ $(x,t)\in \Omega _{T},$ 
\begin{equation}
|u(x,t)|\leq C\left( 1+D+\left( \frac{|\mu |(\Omega _{T})}{D^{N}}\right)
^{m_{3}}+\mathbb{I}_{2}^{2D}[|\mu |](x,t)\right) ,  \label{12122013}
\end{equation}%
where $m_{3}$ and $D$ are defined at (\ref{lim}). }
\end{proposition}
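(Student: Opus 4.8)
The plan is to follow the scheme of Corollary~\ref{1106201417}: localize the problem on a large cylinder, apply the pointwise bound of Theorem~\ref{1106201414} to a nonnegative comparison solution, control the resulting integral term by a Marcinkiewicz estimate, and finally convert the Wolff-type potential $\mathbf{P}_p^{\rho}$ into the Riesz parabolic potential $\mathbb{I}_2$. First I would fix $x_0\in\Omega$ and set $Q=B_{2D}(x_0)\times(-(2D)^p,(2D)^p)$; since $\mathrm{diam}(\Omega)<D$ and $T<D^p$ one checks $\Omega_T\subset Q$, and moreover $Q_{D,D^p}(y,\tau)\subset Q$ for every $(y,\tau)\in\Omega_T$. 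Let $U\geq0$ be the weak solution of $U_t-\Delta_p U=\chi_{\Omega_T}|\mu|$ on $Q$ with zero data on the parabolic boundary and $U(-(2D)^p)=0$. By the comparison principle, extending $u$ by $0$ to $Q$ gives $|u|\leq U$ in $\Omega_T$, so it suffices to bound $U$. Applying Theorem~\ref{1106201414} to $U$ with $\rho=D$ yields, for a.e. $(y,\tau)\in\Omega_T$,
\begin{equation*}
U(y,\tau)\leq C\left(1+\left(\frac{1}{D^{N+p}}\int_Q U^{(\lambda+1)(p-1)}\,dx\,dt\right)^{\frac{1}{1+\lambda(p-1)}}+\mathbf{P}_p^{D}[\chi_{\Omega_T}|\mu|](y,\tau)\right).
\end{equation*}

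To handle the integral term I would invoke the standard a priori estimates for $U$: testing with $T_k(U)$ gives $\int_Q|\nabla T_k(U)|^p\,dx\,dt\leq k\,|\mu|(\Omega_T)$ together with $||U||_{L^\infty((0,T);L^1(\Omega))}\leq|\mu|(\Omega_T)$, and the parabolic Gagliardo--Nirenberg inequality then produces the Marcinkiewicz bound
\begin{equation*}
|\{U>\ell\}|\leq C\,(|\mu|(\Omega_T))^{\frac{N+p}{N}}\,\ell^{-s^\ast},\qquad s^\ast:=p-1+\tfrac{p}{N}=\tfrac{(p-1)N+p}{N},\quad\forall\,\ell>0.
\end{equation*}
Writing $\gamma=(\lambda+1)(p-1)$ one verifies $\gamma<s^\ast$ in both cases $\lambda=\tfrac1{p-1}$ and $\lambda=\tfrac1N$. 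Splitting the layer-cake integral $\int_Q U^{\gamma}=\gamma\int_0^\infty\ell^{\gamma-1}|\{U>\ell\}\cap Q|\,d\ell$ at a level $\ell_0$ (bounding $|\{U>\ell\}\cap Q|\leq|Q|\simeq D^{N+p}$ below $\ell_0$, and by the Marcinkiewicz estimate above $\ell_0$) gives $\int_Q U^{\gamma}\leq C\big(D^{N+p}\ell_0^{\gamma}+(|\mu|(\Omega_T))^{(N+p)/N}\ell_0^{\gamma-s^\ast}\big)$. Optimizing in $\ell_0$, dividing by $D^{N+p}$ and raising to the power $1/(1+\lambda(p-1))$ then yields, after simplifying the exponents with the formula for $s^\ast$, precisely $\big(|\mu|(\Omega_T)/D^N\big)^{m_3}$.

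The crucial and most delicate step is the conversion of the Wolff-type potential into the Riesz parabolic potential. In the infimum defining $D_p(\rho_i)$ I would choose $\tau=\rho_i^{2-p}$, so that $Q_{\rho_i,\tau\rho_i^{p}}=\tilde Q_{\rho_i}$ while $(p-2)\tau^{-1/(p-2)}=(p-2)\rho_i$; hence $D_p(\rho_i)\leq(p-2)\rho_i+\tfrac{1}{2(p-1)^{p-1}}\rho_i^{-N}|\mu|(\tilde Q_{\rho_i})$. Summing over $i\geq0$ with $\rho_i=2^{-i}D$ gives $\sum_i\rho_i=2D$, and comparing $\sum_i\rho_i^{-N}|\mu|(\tilde Q_{\rho_i})$ with $\int_0^{2D}\rho^{-N}|\mu|(\tilde Q_\rho)\tfrac{d\rho}{\rho}$ over the dyadic rings $(\rho_i,2\rho_i)$, on which $|\mu|(\tilde Q_\rho)\geq|\mu|(\tilde Q_{\rho_i})$ and $\rho^{-N}\geq(2\rho_i)^{-N}$, yields
\begin{equation*}
\mathbf{P}_p^{D}[\chi_{\Omega_T}|\mu|](y,\tau)\leq\mathbf{P}_p^{D}[|\mu|](y,\tau)\leq C\,D+C\,\mathbb{I}_2^{2D}[|\mu|](y,\tau).
\end{equation*}

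Combining the three displays gives \eqref{12122013}. I expect the main obstacle to be precisely this potential comparison: $\mathbf{P}_p^{\rho}$ is built on the $p$-scaled cylinders $Q_{\rho,\tau\rho^p}$, and it is the choice $\tau=\rho^{2-p}$ in the defining infimum that rescales them to the $\rho^2$-cylinders underlying $\mathbb{I}_2$, the price being only the benign geometric term $C\,D$. A secondary technical point deserving care is justifying the comparison principle $|u|\leq U$ for distribution solutions with the bounded datum $\mu\in C_b(\Omega_T)$, which is what legitimizes the localization on the enlarged cylinder $Q$ and thereby lets us apply Theorem~\ref{1106201414} uniformly up to the lateral and initial boundaries of $\Omega_T$.
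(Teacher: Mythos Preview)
Your proposal is correct and follows essentially the same route as the paper's own proof: the comparison solution $U$ on the enlarged cylinder $Q$, the application of Theorem~\ref{1106201414} with $\rho=D$, the Marcinkiewicz bound $|\{U>\ell\}|\leq C(|\mu|(\Omega_T))^{(N+p)/N}\ell^{-(p-1+p/N)}$ followed by the layer-cake splitting and optimisation in $\ell_0$, and the conversion $\mathbf{P}_p^{D}\leq C\,D+C\,\mathbb{I}_2^{2D}$ via the choice $\tau=\rho_i^{2-p}$ in the infimum defining $D_p(\rho_i)$. The only cosmetic difference is that the paper cites the Marcinkiewicz estimate from \cite{BiH} rather than re-deriving it via $T_k$-testing and Gagliardo--Nirenberg as you do.
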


\begin{proof}
{\small Let $x_{0}\in \Omega $ and $Q=B_{2D}(x_{0})\times
(-(2D)^{p},(2D)^{p}).$ Let $U\in C(Q)\cap
L^{p}((-(2D)^{p},(2D)^{p});W_{0}^{1,p}(B_{2D}(x_{0})))$ be the distribution
solution of 
\begin{equation}
\left\{ 
\begin{array}{l}
{U_{t}}-{\Delta _{p}}U=\chi _{\Omega _{T}}|\mu |\qquad \text{in }Q, \\ 
{u}=0~~~~~~~~~~~~~~~~\text{on }\partial B_{2D}(x_{0})\times
(-(2D)^{p},(2D)^{p}), \\ 
u(-(2D)^{p})=0~~~\text{ in }~B_{2D}(x_{0}),%
\end{array}%
\right.
\end{equation}%
where for $x_{0}\in \Omega $. Thus, by Theorem \ref{1106201414} we have, for
any $(x,t)\in \Omega _{T},$ 
\begin{equation}
U(x,t)\leq c_{1}\left( 1+\left( \frac{1}{D^{N+p}}%
\int_{Q_{D,D^{p}}(x,t)}|U|^{(\lambda +1)(p-1)}\right) ^{\frac{1}{1+\lambda
(p-1)}}+\mathbf{P}_{p}^{D}[\mu ](x,t)\right) ,  \label{090620145}
\end{equation}%
where $Q_{D,D^{p}}(x,t)=B_{D}(x)\times (t-D^{p},t+D^{p})$. \medskip \newline
According to Proposition 4.8 and Remark 4.9 of \cite{BiH}, there exists a
constant $C_{2}>0$ such that 
\begin{equation*}
|\{{|U|>\ell }\}|\leq c_{2}(|\mu |(\Omega _{T}))^{\frac{p+N}{N}}\ell ^{-p+1-%
\frac{p}{N}}~\qquad \forall \ell >0.
\end{equation*}%
Thus, for any $\ell _{0}>0,$ 
\begin{align*}
& \int_{Q}|U|^{(\lambda +1)(p-1)}dxdt=(\lambda +1)(p-1)\int_{0}^{\infty
}\ell ^{(\lambda +1)(p-1)-1}|\{|U|>\ell \}|d\ell \\
& ~~~=(\lambda +1)(p-1)\int_{0}^{\ell _{0}}\ell ^{(\lambda
+1)(p-1)-1}|\{|U|>\ell \}|d\ell +(\lambda +1)(p-1)\int_{\ell _{0}}^{\infty
}\ell ^{(\lambda +1)(p-1)-1}|\{|U|>\ell \}|d\ell \\
& ~~~\leq c_{3}D^{N+p}\ell _{0}^{(\lambda +1)(p-1)}+c_{4}\ell _{0}^{(\lambda
+1)(p-1)-p+1-\frac{p}{N}}(|\mu |(\Omega _{T}))^{\frac{p+N}{N}}.
\end{align*}%
Choosing $\ell _{0}=\left( \frac{|\mu |(\Omega _{T})}{D^{N}}\right) ^{\frac{%
N+p}{(p-1)N+p}}$, we get 
\begin{equation}
\int_{Q}|U|^{(\lambda +1)(p-1)}dxdt\leq c_{5}D^{N+p}\left( \frac{|\mu
|(\Omega _{T})}{D^{N}}\right) ^{\frac{(N+p)(\lambda +1)(p-1)}{(p-1)N+p}}.
\label{090620143}
\end{equation}%
Next we show that 
\begin{equation}
\mathbf{P}_{p}^{d_{2}}[\mu ](x,t)\leq (p-2)D+c_{6}\mathbb{I}_{2}^{2D}[|\mu
|](x,t).  \label{090620144}
\end{equation}%
Indeed, we have 
\begin{equation*}
D_{p}(\rho _{i})(x,t)\leq (p-2)\rho _{i}+\frac{1}{2(p-1)^{p-1}}\frac{|\mu |(%
\tilde{Q}_{\rho _{i}}(x,t))}{\rho _{i}^{N}},
\end{equation*}%
where $\rho _{i}=2^{-i}D$. Thus, 
\begin{align*}
\mathbf{P}_{p}^{D}[\mu ](x,t)& \leq (p-2)D+\frac{1}{2(p-1)^{p-1}}%
\sum_{i=0}^{\infty }\frac{|\mu |(\tilde{Q}_{\rho _{i}}(x,t))}{\rho _{i}^{N}}
\\
& \leq (p-2)D+C_{5}\int_{0}^{2D}\frac{|\mu |(\tilde{Q}_{\rho }(x,t))}{\rho
^{N}}\frac{d\rho }{\rho }.
\end{align*}%
So from \eqref{090620143}, \eqref{090620144} and \eqref{090620145} we get,
for any $(x,t)\in \Omega _{T},$ 
\begin{equation*}
|U(x,t)|\leq C\left( 1+D+\left( \frac{|\mu |(\Omega _{T})}{D^{N}}\right)
^{m_{3}}+\mathbb{I}_{2}^{2D}[|\mu |](x,t)\right) .
\end{equation*}%
By the comparison principle we get $|u|\leq U$ in $\Omega _{T}$, thus %
\eqref{12122013} follows.\medskip }
\end{proof}

\begin{proposition}
{\small \label{100620142} Let $p>2,$ and $\mu \in \mathcal{M}_{b}(\Omega
_{T}),$ $\sigma \in \mathcal{M}_{b}(\Omega ).$ There exists a distribution
solution $u $ of problem 
\begin{equation}
\left\{ 
\begin{array}{l}
{u_{t}}-{\Delta _{p}}u=\mu ~~\text{in }\Omega _{T}, \\ 
{u}=0~~~\text{on }\partial \Omega \times (0,T), \\ 
u(0)=\sigma .%
\end{array}%
\right.  \label{090620147}
\end{equation}%
which satisfies for any $(x,t)\in \Omega _{T}$ 
\begin{equation}
|u(x,t)|\leq C\left( 1+D+\left( \frac{|\sigma |(\Omega )+|\mu |(\Omega _{T})%
}{D^{N}}\right) ^{m_3}+\mathbb{I}_{2}^{2D}\left[ |\sigma |\otimes \delta
_{\{t=0\}}+|\mu |\right] (x,t)\right) ,  \label{090620146}
\end{equation}%
where $C=C(N,p)$. Moreover, if $\sigma \in L^{1}(\Omega )$, $u$ is a
renormalized solution. }
\end{proposition}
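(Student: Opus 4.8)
The plan is to mirror the construction carried out for the porous medium equation in Theorem \ref{1106201412}, replacing the a priori bounds of Corollary \ref{1106201417} by the pointwise estimate of Proposition \ref{100620147} and the approximation/compactness machinery of Lemma \ref{1106201421} by the stability Theorem \ref{100620145}. First I would absorb the initial datum into the equation: setting $\nu=\sigma\otimes\delta_{\{t=0\}}+\chi_{\Omega_T}\mu\in\mathcal{M}_b(\Omega\times(-T,T))$, it suffices (after a time translation and Remark \ref{110620143}) to produce a distribution solution $U$ on $\Omega\times(-T,T)$ of $U_t-\Delta_p U=\nu$ with zero datum at $t=-T$ and with $U\equiv 0$ on $\Omega\times(-T,0)$; the restriction $u:=U|_{\Omega_T}$ will then solve the original problem. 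Since $|\nu|(\Omega\times(-T,T))=|\sigma|(\Omega)+|\mu|(\Omega_T)$ and $\mathbb{I}_2^{2D}[|\nu|]=\mathbb{I}_2^{2D}[|\sigma|\otimes\delta_{\{t=0\}}+|\mu|]$, the target bound \eqref{090620146} is precisely the estimate for $U$ rewritten in terms of $\nu$.

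Next I would split $\nu=\nu^+-\nu^-$ and apply Proposition \ref{100620141} to each of the positive measures $\nu^\pm$, obtaining smooth approximations $\nu_n^\pm=\nu_{n,0}^\pm+\nu_{n,s}^\pm$ whose decompositions and singular parts converge in exactly the sense required by Theorem \ref{100620145}, and which satisfy $0\le\nu_n^\pm\le(\varphi_{1,n}\varphi_{2,n})\ast\nu^\pm$ together with the stated mass bounds. Solving the smooth problems
\begin{equation*}
(U_n)_t-\Delta_p U_n=\nu_n^+-\nu_n^-\ \text{ in }\Omega\times(-T,T),\qquad U_n(-T)=0,
\end{equation*}
yields (classical, hence renormalized) solutions to which Proposition \ref{100620147} applies, giving the uniform bound
\begin{equation*}
|U_n(x,t)|\le C\left(1+D+\left(\frac{|\nu_n|(\Omega\times(-T,T))}{D^N}\right)^{m_3}+\mathbb{I}_2^{2D}[|\nu_n|](x,t)\right).
\end{equation*}
The stability Theorem \ref{100620145}, applied with $\sigma_n=0$ and with $\nu_{n,s}^\pm$ playing the role of $\rho_n,\eta_n$, then furnishes (up to a subsequence) a renormalized, hence distribution, solution $U$ with $U_n\to U$ a.e.

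It remains to pass the pointwise bound to the limit. From $|\nu_n|\le(\varphi_{1,n}\varphi_{2,n})\ast|\nu|$ and the convolution identity $\mathbb{I}_2^{2D}[(\varphi_{1,n}\varphi_{2,n})\ast|\nu|]=(\varphi_{1,n}\varphi_{2,n})\ast\mathbb{I}_2^{2D}[|\nu|]$ (the parabolic Riesz potential being itself a convolution, a device already used in Lemma \ref{1106201421}), together with $\mathbb{I}_2^{2D}[|\nu|]\in L^1_{loc}$ so that these mollifications converge to it a.e., the a.e.\ convergence $U_n\to U$ yields \eqref{090620146} for $U$, and hence for $u$. The support of $\nu_n$ lies in $\overline\Omega\times[-c_n,T]$ with $c_n\to 0$, so each $U_n$ vanishes for $t<-c_n$ by uniqueness, whence $U\equiv 0$ on $\Omega\times(-T,0)$ and the reduction of the first step is legitimized. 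Finally, when $\sigma\in L^1(\Omega)$ I would \emph{not} absorb $\sigma$ into the interior: approximating $\mu$ as above and taking $\sigma_n\to\sigma$ in $L^1(\Omega)$, Theorem \ref{100620145} applies directly with nonzero $L^1$ initial data and produces a renormalized solution of the original problem, the pointwise estimate following by the identical limiting argument.

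The step I expect to be most delicate is this final limit passage, for two reasons. One must simultaneously dominate $\mathbb{I}_2^{2D}[|\nu_n|]$ by a mollification of the fixed potential $\mathbb{I}_2^{2D}[|\nu|]$ and secure genuine a.e.\ convergence of the $U_n$, which rests on verifying that the approximating measures supplied by Proposition \ref{100620141} really meet the rather rigid decomposition hypotheses of Theorem \ref{100620145}. Moreover, the time-singular datum $\sigma\otimes\delta_{\{t=0\}}$ must be handled inside this scheme, and one must confirm that the narrow-convergence requirements are compatible with the support-shrinking that forces the limit to vanish for $t<0$; reconciling these constraints is where the main care is needed.
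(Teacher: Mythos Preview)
Your proposal is correct and follows essentially the same approach as the paper: approximate the data via Proposition \ref{100620141}, apply the pointwise bound of Proposition \ref{100620147} on the extended cylinder $\Omega\times(-T,T)$ with zero initial datum, then pass to the limit by the stability Theorem \ref{100620145} (and Proposition \ref{100620148} for the mere distribution solution) together with the convolution identity for $\mathbb{I}_2^{2D}$. The only organisational difference is that the paper keeps $\sigma$ as an initial datum throughout and uses a nested double index $u_{n,m}$ (writing $\sigma_n\otimes\varphi_{2,m}$ as $(g_{n,m})_t$, letting $m\to\infty$ first, then $n\to\infty$), which lets them treat the general and the $L^1$ case simultaneously, whereas you collapse the two indices by absorbing $\sigma\otimes\delta_{\{t=0\}}$ into the interior measure $\nu$ from the start and handle $\sigma\in L^1(\Omega)$ separately; the tools and logic are identical.
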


\begin{proof}
{\small Let $\{\varphi _{1,n}\},\{\varphi _{2,n}\}$ be sequences of standard
mollifiers in $\mathbb{R}^{N}$ and $\mathbb{R}$. Let $\mu =\mu _{0}+\mu
_{s}\in \mathcal{M}_{b}(\Omega _{T})$, with $\mu _{0}\in \mathcal{M}%
_{0}(\Omega _{T}),\mu _{s}\in \mathcal{M}_{s}(\Omega _{T})$. By Lemma \ref%
{100620141}, there exist sequences of nonnegative measures $\mu
_{n,0,i}=(f_{n,i},g_{n,i},h_{n,i})$ and $\mu _{n,s,i}$ such that $%
f_{n,i},g_{n,i},h_{n,i}\in C_{c}^{\infty }(\Omega _{T})$ and strongly
converge to some $f_{i},g_{i},h_{i}$ in $L^{1}(\Omega _{T}),(L^{p^{\prime
}}(\Omega _{T}))^{N}$ and $L^{p}((0,T);W_{0}^{1,p}(\Omega ))$ respectively,
and $\mu _{n,1},\mu _{n,2},\mu _{n,s,1},\mu _{n,s,2}\in C_{c}^{\infty
}(\Omega _{T})$ converge to $\mu ^{+},\mu ^{-},\mu _{s}^{+},\mu _{s}^{-}$ in
the narrow topology, with $\mu _{n,i}=\mu _{n,0,i}+\mu _{n,s,i}$, for $%
i=1,2, $ and satisfying 
\begin{equation*}
\mu _{0}^{+}=(f_{1},g_{1},h_{1}),\mu _{0}^{-}=(f_{2},g_{2},h_{2})~\text{ and 
}~0\leq \mu _{n,1}\leq (\varphi _{1,n}\varphi _{2,n})\ast \mu ^{+},0\leq \mu
_{n,2}\leq (\varphi _{1,n}\varphi _{2,n})\ast \mu ^{-}.
\end{equation*}%
Let $\sigma _{1,n},\sigma _{2,n}\in C_{c}^{\infty }(\Omega )$, converging to 
$\sigma ^{+}$ and $\sigma ^{-}$ in the narrow topology, and in $L^{1}(\Omega
)$ if $\sigma \in L^{1}(\Omega )$, such that 
\begin{equation*}
0\leq \sigma _{1,n}\leq \varphi _{1,n}\ast \sigma ^{+},0\leq \sigma
_{2,n}\leq \varphi _{1,n}\ast \sigma ^{-}.
\end{equation*}%
Set $\mu _{n}=\mu _{n,1}-\mu _{n,2}$ and $\sigma _{n}=\sigma _{1,n}-\sigma
_{2,n}$.\newline
Let $u_{n}$ be solution of the approximate problem 
\begin{equation}
\left\{ 
\begin{array}{l}
{(u_{n})_{t}}-\Delta _{p}u_{n}=\mu _{n}~~\text{in }\Omega _{T}, \\ 
u_{n}=0~~~\text{on}~\partial \Omega \times (0,T), \\ 
{u}_{n}(0)=\sigma _{n}~~\text{on }\Omega .%
\end{array}%
\right.
\end{equation}%
Let $g_{n,m}(x,t)=\sigma _{n}(x)\int_{-T}^{t}\varphi _{2,m}(s)ds$. As in
proof of Theorem 2.1 in \cite{H1}, by Theorem \ref{100620145}, there exists
a sequence $\{u_{n,m}\}_{m}$ of solutions of the problem 
\begin{equation}
\left\{ 
\begin{array}{l}
{(u_{n,m})_{t}}-\Delta _{p}u_{n,1,m}=\left( g_{n,m}\right) _{t}+\chi
_{\Omega _{T}}\mu _{n}~~\text{in }\Omega \times (-T,T), \\ 
u_{n,1,m}=0~~~\text{on}~\partial \Omega \times (-T,T), \\ 
{u}_{n,m}(-T)=0~~\text{on }\Omega ,%
\end{array}%
\right.
\end{equation}%
which converges to $u_{n}$ in $\Omega \times (0,T)$. By Proposition \ref%
{100620147}, there holds, for any $(x,t)\in \Omega _{T}$, 
\begin{equation*}
|u_{n,m}(x,t)|\leq C\left( 1+D+\left( \frac{|\mu _{n}|(\Omega _{T})+(|\sigma
_{n}|\otimes \varphi _{2,m})(\Omega \times (-T,T))}{D^{N}}\right) ^{m_{3}}+%
\mathbb{I}_{2}^{2D}[|\mu _{n}|+|\sigma _{n}|\otimes \varphi
_{2,m}](x,t)\right) .
\end{equation*}%
Therefore%
\begin{align*}
|u_{n,m}(x,t)|& \leq C\left( 1+D+\left( \frac{|\mu _{n}|(\Omega
_{T})+(|\sigma _{n}|\otimes \varphi _{2,m})(\Omega \times (-T,T))}{D^{N}}%
\right) ^{m_{3}}\right) \\
& ~~~~+C(\varphi _{1,n}\varphi _{2,m})\ast \mathbb{I}_{2}^{2D}[|\mu
|+|\sigma |\otimes \delta _{\{t=0\}}](x,t).
\end{align*}%
Letting $m\rightarrow \infty $, we get 
\begin{equation*}
|u_{n}(x,t)|\leq C\left( 1+D+\left( \frac{|\mu _{n}|(\Omega _{T})+|\sigma
_{n}|(\Omega )}{D^{N}}\right) ^{m_{3}}\right) +c_{1}(\varphi _{1,n})\ast (%
\mathbb{I}_{2}^{2D}[|\mu |+|\sigma |\otimes \delta _{\{t=0\}}](.,t))(x).
\end{equation*}%
Therefore, by Proposition \ref{100620148} and Theorem \ref{100620145} , up
to a subsequence, $\{u_{n}\}$ converges to a distribution solution $u$ of %
\eqref{090620147} (a renormalized solution if $\sigma \in L^{1}(\Omega )$),
and satisfying \eqref{090620146}. }
\end{proof}

\begin{proof}[Proof of Theorem \protect\ref{100620149}]
{\small \textbf{Step 1.} First, assume that $\sigma \in L^{1}(\Omega )$.
Because $\mu $ is absolutely continuous with respect to the capacity $\text{%
Cap}_{2,1,q^{\prime }}$, so are $\mu ^{+}$ and $\mu ^{-}$. Applying
Proposition \ref{1006201410} to $\mu ^{+},\mu ^{-}$, there exist two
nondecreasing sequences $\{\mu _{1,n}\}$ and $\{\mu _{2,n}\}$ of positive
bounded measures with compact support in $\Omega _{T}$ which converge to $%
\mu ^{+}$ and $\mu ^{-}$ in $\mathcal{M}_{b}(\Omega _{T})$ respectively and
such that $\mathbb{I}_{2}^{2D}[\mu _{1,n}],\mathbb{I}_{2}^{2D}[\mu
_{2,n}]\in L^{q}(\Omega _{T})$ for all $n\in \mathbb{N}$.\newline
For $i=1,2$, set $\tilde{\mu}_{i,1}=\mu _{i,1}$ and $\tilde{\mu}_{i,j}=\mu
_{i,j}-\mu _{i,j-1}\geq 0$, so $\mu _{i,n}=\sum_{j=1}^{n}\tilde{\mu}_{i,j}$.
We write 
\begin{equation*}
\mu _{i,n}=\mu _{i,n,0}+\mu _{i,n,s},\tilde{\mu}_{i,j}=\tilde{\mu}_{i,j,0}+%
\tilde{\mu}_{i,j,s},~\text{ with }\mu _{i,n,0},\tilde{\mu}_{i,n,0}\in 
\mathcal{M}_{0}(\Omega _{T}),\mu _{i,n,s},\tilde{\mu}_{i,n,s}\in \mathcal{M}%
_{s}(\Omega _{T}).
\end{equation*}%
Let $\{\varphi _{m}\}$ be a sequence of mollifiers in $\mathbb{R}^{N+1}$. As
in the proof of Proposition \ref{100620142}, for any $j\in \mathbb{N}$ and $%
i=1,2$, there exist sequences of nonnegative measures $\tilde{\mu}%
_{m,i,j,0}=(f_{m,i,j},g_{m,i,j},h_{m,i,j})$ and $\tilde{\mu}_{m,i,j,s}$ such
that $f_{m,i,j},g_{m,i,j},h_{m,i,j}\in C_{c}^{\infty }(\Omega _{T})$
strongly converge to some $f_{i,j},g_{i,j},h_{i,j}$ in $L^{1}(\Omega
_{T}),(L^{p^{\prime }}(\Omega _{T}))^{N}$ and $L^{p}(0,T,W_{0}^{1,p}(\Omega
))$ respectively; and $\tilde{\mu}_{m,i,j},\tilde{\mu}_{m,i,j,s}\in
C_{c}^{\infty }(\Omega _{T})$ converge to $\tilde{\mu}_{i,j},\tilde{\mu}%
_{i,j,s}$ in the narrow topology with $\tilde{\mu}_{m,i,j}=\tilde{\mu}%
_{m,i,j,0}+\tilde{\mu}_{m,i,j,s},$ which satisfy $\tilde{\mu}%
_{i,j,0}=(f_{i,j},g_{i,j},h_{i,j}),$ and 
\begin{equation*}
0\leq \tilde{\mu}_{m,i,j}\leq \varphi _{m}\ast \tilde{\mu}_{i,j},\tilde{\mu}%
_{m,i,j}(\Omega _{T})\leq \tilde{\mu}_{i,j}(\Omega _{T}),
\end{equation*}%
\begin{equation}
||f_{m,i,j}||_{L^{1}(\Omega _{T})}+\left\Vert g_{m,i,j}\right\Vert
_{(L^{p^{\prime }}(\Omega
_{T}))^{N}}+||h_{m,i,j}||_{L^{p}(0,T,W_{0}^{1,p}(\Omega ))}+\mu
_{m,i,j,s}(\Omega _{T})\leq 2\tilde{\mu}_{i,j}(\Omega _{T}).
\label{1006201411}
\end{equation}%
Note that, for any $n,m\in \mathbb{N},$ 
\begin{equation*}
\sum_{j=1}^{n}(\tilde{\mu}_{m,1,j}+\tilde{\mu}_{m,2,j})\leq \varphi _{m}\ast
(\mu _{1,n}+\mu _{2,n})\text{ and }\sum_{j=1}^{n}(\tilde{\mu}_{m,1,j}(\Omega
_{T})+\tilde{\mu}_{m,2,j}(\Omega _{T}))\leq |\mu |(\Omega _{T}).
\end{equation*}%
For any $n,k,m\in \mathbb{N}$, let $u_{n,k,m},v_{n,k,m}\in W$ be solutions
of problems 
\begin{equation}
\left\{ 
\begin{array}{l}
(u_{n,k,m})_{t}-\Delta
_{p}u_{n,k,m}+T_{k}(|u_{n,k,m}|^{q-1}u_{n,k,m})=\sum_{j=1}^{n}(\tilde{\mu}%
_{m,1,j}-\tilde{\mu}_{m,2,j})~\text{ in }~\Omega _{T}, \\ 
u_{n,k,m}=0~~~~~~~~~~~~~~~~\text{ on }\partial \Omega \times (0,T), \\ 
u_{n,k,m}(0)=T_{n}(\sigma ^{+})-T_{n}(\sigma ^{-})~~~\text{on }~\Omega ,%
\end{array}%
\right.  \label{1006201412}
\end{equation}%
and 
\begin{equation}
\left\{ 
\begin{array}{l}
(v_{n,k,m})_{t}-\Delta _{p}v_{n,k,m}+T_{k}(v_{n,k,m}^{q})=\sum_{j=1}^{n}(%
\tilde{\mu}_{m,1,j}+\tilde{\mu}_{m,2,j})~\text{ in }~\Omega _{T}, \\ 
v_{n,k,m}=0~~~~~~~~~~~~~~~~\text{ on }\partial \Omega \times (0,T), \\ 
v_{n,k,m}(0)=T_{n}(|\sigma |)~~~\text{on }~\Omega .%
\end{array}%
\right.  \label{1006201413}
\end{equation}%
By the comparison principle and Proposition \ref{1006201410} we have for any 
$m,k$ the sequences $\{v_{n,k,m}\}_{n}$ is increasing and 
\begin{align*}
|u_{n,k,m}|\leq v_{n,k,m}& \leq c_{1}\left( 1+D+\left( \frac{|\sigma
|(\Omega )+|\mu |(\Omega _{T})}{D^{N}}\right) ^{m_{3}}+\mathbb{I}_{2}^{2D}%
\left[ T_{n}(|\sigma |)\otimes \delta _{\{t=0\}}\right] \right) \\
& +c_{1}\varphi _{m}\ast \mathbb{I}_{2}^{2D}\left[ \mu _{1,n}+\mu _{2,n}%
\right] .
\end{align*}%
Moreover, 
\begin{equation*}
\int_{\Omega _{T}}T_{k}(v_{n,k,m}^{q})dxdt\leq |\mu |(\Omega _{T})+|\sigma
|(\Omega ).
\end{equation*}%
As in \cite[Proof of Lemma 6.4]{BiH}, thanks to Proposition \ref{100620148}
and Theorem \ref{100620145}, up to subsequences, $\{u_{n,k,m}\}_{m}$
converge to a renormalized solutions $u_{n,k}$ of problem 
\begin{equation*}
\left\{ 
\begin{array}{l}
(u_{n,k})_{t}-\Delta _{p}u_{n,k}+T_{k}(|u_{n,k}|^{q-1}u_{n,k})=\mu
_{1,n}-\mu _{2,n}\text{ in }~\Omega _{T}, \\ 
u_{n,k}=0~~~~~~~~~~~~~~~~\text{ on }\partial \Omega \times (0,T), \\ 
u_{n,k}(0)=T_{n}(\sigma ^{+})-T_{n}(\sigma ^{-})~~~\text{on }~\Omega ,%
\end{array}%
\right.
\end{equation*}%
relative to the decomposition $(\sum_{j=1}^{n}f_{1,j}-\sum_{j=1}^{n}f_{2,j},%
\sum_{j=1}^{n}g_{1,j}-\sum_{j=1}^{n}g_{2,j},\sum_{j=1}^{n}h_{1,j}-%
\sum_{j=1}^{n}h_{2,j})$ of $\mu _{1,n,0}-\mu _{2,n,0}$; and $%
\{v_{n,k,m}\}_{m}$ converge to a solution $v_{n,k}$ of 
\begin{equation*}
\left\{ 
\begin{array}{l}
(v_{n,k})_{t}-\Delta _{p}v_{n,k}+T_{k}(v_{n,k}^{q})=\mu _{1,n}+\mu _{2,n}~%
\text{ in }~\Omega _{T}, \\ 
v_{n,k}=0~~~~~~~~~~~~~~~~\text{ on }\partial \Omega \times (0,T), \\ 
v_{n,k}(0)=T_{n}(|\sigma |)~~~\text{on }~\Omega .%
\end{array}%
\right.
\end{equation*}%
relative to the decomposition $(\sum_{j=1}^{n}f_{1,j}+\sum_{j=1}^{n}f_{2,j},%
\sum_{j=1}^{n}g_{1,j}+\sum_{j=1}^{n}g_{2,j},\sum_{j=1}^{n}h_{1,j}+%
\sum_{j=1}^{n}h_{2,j})$ of $\mu _{1,n,0}+\mu _{2,n,0}$. And there holds 
\begin{equation*}
|u_{n,k}|\leq v_{n,k}\leq C\left( 1+D+\left( \frac{|\sigma |(\Omega )+|\mu
|(\Omega _{T})}{D^{N}}\right) ^{m_{3}}+\mathbb{I}_{2}^{2D}\left[
T_{n}(|\sigma |)\otimes \delta _{\{t=0\}}\right] \right) +C\mathbb{I}%
_{2}^{2D}\left[ \mu _{1,n}+\mu _{2,n}\right] .
\end{equation*}%
Observe that $\mathbb{I}_{2}^{2D}[\mu _{1,n}+\mu _{2,n}]\in L^{q}(\Omega
_{T})$ for any $n\in \mathbb{N}.$ Then, as in \cite[Proof of Lemma 6.5]{BiH}%
, thanks to Proposition \ref{100620148} and Theorem \ref{100620145}, up to a
subsequence, $\{u_{n,k}\}_{k}$ $\{v_{n,k}\}_{k}$ converge to renormalized
solutions $u_{n},v_{n}$ of problems 
\begin{equation}
\left\{ 
\begin{array}{l}
(u_{n})_{t}-\Delta _{p}u_{n}+|u_{n}|^{q-1}u_{n}=\mu _{1,n}-\mu _{2,n}~\text{
in }~\Omega _{T}, \\ 
u_{n}=0~~~~~~~~~~~~~~~~\text{ on }\partial \Omega \times (0,T), \\ 
u_{n}(0)=T_{n}(\sigma ^{+})-T_{n}(\sigma ^{-})~~~\text{in }~\Omega ,%
\end{array}%
\right.  \label{110620141}
\end{equation}%
\begin{equation}
\left\{ 
\begin{array}{l}
(v_{n})_{t}-\Delta _{p}v_{n}+v_{n}^{q}=\mu _{1,n}+\mu _{2,n}~\text{ in }%
~\Omega _{T}, \\ 
v_{n}=0~~~~~~~~~~~~~\text{ on }\partial \Omega \times (0,T), \\ 
v_{n}(0)=T_{n}(|\sigma |)~~~\text{in }~\Omega , \\ 
\end{array}%
\right.  \label{110620142}
\end{equation}%
which still satisfy 
\begin{equation*}
|u_{n}|\leq v_{n}\leq C\left( 1+D+\left( \frac{|\sigma |(\Omega )+|\mu
|(\Omega _{T})}{D^{N}}\right) ^{m_{3}}+\mathbb{I}_{2}^{2D}\left[
T_{n}(|\sigma |)\otimes \delta _{\{t=0\}}\right] \right) +C\mathbb{I}%
_{2}^{2D}\left[ \mu _{1,n}+\mu _{2,n}\right] .
\end{equation*}%
and the sequence $\{v_{n}\}_{n}$ is increasing and 
\begin{equation*}
\int_{\Omega _{T}}v_{n}^{q}dxdt\leq |\mu |(\Omega _{T})+|\sigma |(\Omega ).
\end{equation*}%
Note that from \eqref{1006201411} we have 
\begin{equation*}
||f_{i,j}||_{L^{1}(\Omega _{T})}+\left\Vert g_{i,j}\right\Vert
_{(L^{p^{\prime }}(\Omega
_{T}))^{N}}+||h_{i,j}||_{L^{p}(0,T,W_{0}^{1,p}(\Omega ))}\leq 2\tilde{\mu}%
_{i,j}(\Omega _{T}),
\end{equation*}%
which implies 
\begin{equation*}
||\sum_{j=1}^{n}f_{i,j}||_{L^{1}(\Omega
_{T})}+||\sum_{j=1}^{n}g_{i,j}||_{(L^{p^{\prime }}(\Omega
_{T}))^{N}}+||\sum_{j=1}^{n}h_{i,j}||_{L^{p}(0,T,W_{0}^{1,p}(\Omega ))}\leq 2%
\tilde{\mu}_{i,n}(\Omega _{T})\leq 2|\mu |(\Omega _{T}).
\end{equation*}%
Finally, as in \cite[Proof of Theorem 6.3]{BiH}, from Proposition \ref%
{100620148}, Theorem \ref{100620145} and the monotone convergence Theorem,
up to subsequences $\{u_{n}\}_{n}$, $\{v_{n}\}_{n}$ converge to a
renormalized solutions $u$, $v$ of problem 
\begin{equation*}
\left\{ 
\begin{array}{l}
u_{t}-\Delta _{p}u+|u|^{q-1}u=\mu ~\text{ in }~\Omega _{T}, \\ 
u=0~~~~~~~~~~~~~~~~\text{ on }\partial \Omega \times (0,T), \\ 
u(0)=\sigma ~~~\text{in }~\Omega ,%
\end{array}%
\right.
\end{equation*}%
relative to the decomposition $(\sum_{j=1}^{\infty
}f_{1,j}-\sum_{j=1}^{\infty }f_{2,j},\sum_{j=1}^{\infty
}g_{1,j}-\sum_{j=1}^{\infty }g_{2,j},\sum_{j=1}^{\infty
}h_{1,j}-\sum_{j=1}^{\infty }h_{2,j})$ of $\mu _{0}$. And 
\begin{equation*}
\left\{ 
\begin{array}{l}
v_{t}-\Delta _{p}v+v^q=\left\vert \mu \right\vert ~\text{ in }~\Omega _{T},
\\ 
v=0~~~~~~~~~~~~~~~~\text{ on }\partial \Omega \times (0,T), \\ 
v(0)=|\sigma |~~~\text{in }~\Omega ,%
\end{array}%
\right.
\end{equation*}%
relative to the decomposition $(\sum_{j=1}^{\infty
}f_{1,j}+\sum_{j=1}^{\infty }f_{2,j},\sum_{j=1}^{\infty
}g_{1,j}+\sum_{j=1}^{\infty }g_{2,j},\sum_{j=1}^{\infty
}h_{1,j}+\sum_{j=1}^{\infty }h_{2,j})$ of $|\mu _{0}|$ respectively; and 
\begin{equation*}
|u|\leq v\leq C\left( 1+D+\left( \frac{|\sigma |(\Omega )+|\mu |(\Omega _{T})%
}{D^{N}}\right) ^{m_{3}}+\mathbb{I}_{2}^{2D}\left[ |\sigma |\otimes \delta
_{\{t=0\}}+|\mu |\right] \right)
\end{equation*}%
Remark that, if $\sigma \equiv 0$ and $\text{supp}(\mu )\subset \overline{%
\Omega }\times \lbrack a,T]$, $a>0$, then $u=v=0$in $\Omega \times (0,a),$
since $u_{n,k}=v_{n,k}=0$ in $\Omega \times (0,a)$. \medskip \newline
\textbf{Step 2.} We consider any $\sigma \in \mathcal{M}_{b}(\Omega )$ such
that $\sigma $ is absolutely continuous with respect to the capacity $\text{%
Cap}_{\mathbf{G}_{\frac{2}{q}},q^{\prime }}$ in $\Omega $. So, $\mu +\sigma
\otimes \delta _{\{t=0\}}$ is absolutely continuous with respect to the
capacity $\text{Cap}_{2,1,q^{\prime }}$ in $\Omega \times (-T,T)$. As above,
we verify that there exists a renormalized solution $u$ of 
\begin{equation*}
\left\{ 
\begin{array}{l}
u_{t}-\Delta _{p}u+|u|^{q-1}u=\chi _{\Omega _{T}}\mu +\sigma \otimes \delta
_{\{t=0\}}~\text{ in }~\Omega \times (-T,T) \\ 
u=0~~~~~~~~~~\text{ on }\partial \Omega \times (-T,T), \\ 
u(-T)=0~~~\text{on }~\Omega ,%
\end{array}%
\right.
\end{equation*}%
satisfying $u=0$ in $\Omega \times (-T,0)$ and \eqref{080720141}. Finally,
from Remark \ref{110620143} we get the result. This completes the proof of
the Theorem. }
\end{proof}

\end{document}